\documentclass[11pt]{article}

\usepackage[english]{babel}
\usepackage[letterpaper,top=2cm,bottom=2cm,left=2.5cm,right=2.5cm,marginparwidth=1.75cm]{geometry}
\usepackage{amsmath}
\usepackage{amsthm}
\usepackage{thmtools}
\usepackage{amssymb}
\usepackage{soul}
\usepackage{mathtools}
\usepackage{graphicx}
\usepackage{xcolor}
\usepackage{subcaption}
\usepackage{comment}
\usepackage[normalem]{ulem}
\usepackage[hypertexnames=false]{hyperref}
\hypersetup{
  colorlinks   = true, 
  urlcolor     = magenta, 
  linkcolor    = blue, 
  citecolor   = [RGB]{20 139 34}
}

\usepackage{zref-clever}
\let\oldnewtheorem\newtheorem

\RenewDocumentCommand{\newtheorem}{momo}{
  \IfValueTF{#2}{
    \AddToHook{env/#1/begin}{
      \zcsetup{countertype={#2=#1}}}
      \zcRefTypeSetup{#1}{
Name-sg = #3 ,
      }
    \oldnewtheorem{#1}[#2]{#3}
  }{
    \AddToHook{env/#1/begin}{
      \zcsetup{countertype={#1=#1}}}
    \zcRefTypeSetup{#1}{
Name-sg = #3 ,
      }
    \IfValueTF{#4}{
      \oldnewtheorem{#1}{#3}[#4]
    }{
      \oldnewtheorem{#1}{#3}
    }
  }
}

\newcommand{\cref}[1]{\zcref{#1}}
\newcommand{\Cref}[1]{\zcref[S]{#1}} 

\usepackage[all]{xy}
\usepackage{dirtytalk}

\title{Null distance on cosmological spacetimes and monotone convergence}
\author{Miguel Prados-Abad\texorpdfstring{\thanks{
    \parbox[t]{.9\linewidth}{
          Faculty of Mathematics, University of Vienna, Oskar-Morgenstern-Platz 1, 1090 Vienna\\
          {\em Emails}: \href{mailto:miguel.prados.abad@univie.ac.at}{\texttt{miguel.prados.abad@univie.ac.at}},\,
                 \href{mailto:omar.zoghlami@univie.ac.at}{\texttt{omar.zoghlami@univie.ac.at}}%
        }}
    }{}
     \and Omar Zoghlami\footnotemark[\value{footnote}]
}
\date{}

\newcommand{\R}{\mathbb{R}}
\newcommand{\N}{\mathbb{N}}
\newcommand{\abs}[1]{ \left\vert #1 \right\vert}
\renewcommand{\d}{{\smash{\hat{d}}}}
\renewcommand{\L}{\smash{\hat{L}}}

\usepackage{latexsym}
\usepackage[utf8]{inputenc}
\usepackage{amsfonts,fontaxes}
\usepackage{xspace,float}
\usepackage{color,colortbl,tikz}
\usepackage{xfrac}
\usepackage{bm}
\usepackage{cancel}
\usepackage[backend=biber, style=alphabetic, maxcitenames=1,uniquelist=false,maxbibnames=20]{biblatex}
\usepackage{csquotes,titlesec,enumitem}
\usepackage{multicol}

\AtEveryBibitem{\clearfield{issn}\clearfield{isbn}\clearfield{url}}

\theoremstyle{plain}
\newtheorem{theorem}{Theorem}[section]
\newtheorem{lemma}[theorem]{Lemma}
\newtheorem{proposition}[theorem]{Proposition}
\theoremstyle{remark}
\newtheorem{remark}[theorem]{Remark}

\theoremstyle{plain}
\newtheorem{corollary}[theorem]{Corollary}

\theoremstyle{definition}
\newtheorem{definition}[theorem]{Definition}

\theoremstyle{remark}
\newtheorem{example}[theorem]{Example}

\numberwithin{equation}{section}

\newcommand{\diam}{\mathrm{diam}}

\begin{document}
\maketitle
\begin{abstract}
The metric theory of spacetimes studies Lorentzian manifolds using tools of metric geometry. This is achieved via the null distance, which is a definite distance constructed from a time function on a spacetime. This enables the study of Gromov-Hausdorff-type convergence of spacetimes, a program recently initiated by Sakovich and Sormani. In this paper we study such notions of convergence for cosmological spacetimes with compact slices, i.e.,  $(a,b)\times M$ endowed with a Lorentzian metric $-dt^2+h_t$, where $h_t$ is a family of Riemannian metrics on the compact manifold $M$. Assuming mild extension properties of $h_t$, we first establish that these spacetimes are causally-null compactifiable and future developed. We then study monotone sequences with a uniform upper bound on the spatial diameter, obtaining uniform convergence of the null distances, as well as convergence of the associated timed metric spaces in the future developed Gromov-Hausdorff sense. Finally, we prove that causally-null compactifiable spacetimes satisfying a mild causal accessibility condition are causally-null, and relate the causally-null distance induced by the limit distance with the null distance induced by the (possibly non-smooth) limit metric tensor. Examples are provided to motivate the necessity of our hypotheses.

\end{abstract}

\textbf{\textit{Keywords---}} Null distance, cosmological spacetimes, timed metric spaces

\textbf{\textit{MSC (2020)---}} 53B30, 53C23, 53C50

\tableofcontents

\section{Introduction}

The metric theory of spacetimes seeks to study Lorentzian geometry using the tools of metric geometry: convergence, compactness, and synthetic notions of curvature and dimension that have proven powerful in the Riemannian setting. In the Lorentzian framework, the central obstruction is the absence of a canonical distance: the Lorentzian distance, also called time-separation function is degenerate, fails to be symmetric and satisfies the \emph{reverse} triangle inequality.

The null distance framework, introduced in the work by Sormani and Vega \cite{sormani_null_2016}, has emerged as a versatile tool to study the geometry of spacetimes with metric techniques. It replaces reliance on Lorentzian length with a \textit{null length}, built from a (generalized) time function $\tau$ by measuring the length of causal curves through the variation of $\tau$ alone. This length functional provides a genuine pseudo-distance obeying the
ordinary triangle inequality. Moreover, as shown in the work by Sormani and Sakovich \cite{sakovich_null_2023}, under mild hypotheses on the time function such as local anti-Lipschitzness, the null distance is definite and encodes causality, thereby turning a spacetime into a metric space while keeping its causal information. Furthermore, in \cite[Theorem 1.3]{sakovich_null_2023} the authors proved that the null distance induced by the cosmological time of Andersson, Galloway and Howard \cite{andersson_cosmological_1998} retains the full causal information about a spacetime: bijections preserving both the null distance and the cosmological time are in fact Lorentzian isometries.

In a recent key contribution, Sakovich and Sormani \cite{sakovich_introducing_2024} have turned the study of convergence of metric spacetimes into a research program. Working within the class of so-called causally-null compactifiable spacetimes (\Cref{def:null_compactifiable}), they canonically convert them into compact timed metric spaces via the cosmological time and the induced null distance. They also defined the notions of  \emph{timed-Hausdorff} and \emph{future developed Gromov-Hausdorff} distance and proved definiteness results within this class, characterizing the zero-distance case as time-preserving Lorentzian isometry. In the same spirit, they defined notions of convergence to limit spacetimes that need not be smooth.

More recently, Perales showed that future developed convergence implies timed-Hausdorff convergence \cite{perales_timed_hausdoff_2025}, and Che and Perales \cite{che_perales_2026} introduced \emph{causally-null} timed metric spaces, asking under which conditions the distance in a timed metric space is a null distance (\Cref{def:causally_null_distance}). Alternative approaches to convergence of spacetimes and their low regularity counterparts have recently been proposed: for instance in the works of Minguzzi and Suhr \cite{minguzzi_lorentzian_2024}; Bykov, Minguzzi and Suhr \cite{bykov_lorentzian_2025}; and Mondino and S\"amann \cite{mondino_lorentzian_2025}.

By now a body of convergence results has grown in the framework of \cite{sormani_null_2016}: Allen and Burtscher \cite{allen_properties_2022} gave warped products an integral current structure and proved Gromov-Hausdorff and intrinsic-flat convergence under uniform convergence of the warping functions; Allen \cite{allen_null_2023} relaxed this to $L^1$-convergence with a lower bound; Kunzinger and Steinbauer \cite{kunzinger_null_2022} extended these concepts to the framework of Lorentzian length spaces; and Burtscher and García-Heveling \cite{burtscher_global_2024} related the null distance to global hyperbolicity. 

In this contribution we carry further the research program started in \cite{sakovich_introducing_2024} by considering \emph{cosmological spacetimes} (\Cref{def:generalized_product}), where the spacetime splits as \((0, \tau_{\max}) \times M\) with Lorentzian metric \(g = -dt^2 + h_{(t,x)}\), where \(h\) has Riemannian signature. This class is closely related to some of the most relevant examples of spacetimes. For instance, globally hyperbolic spacetimes are always conformal to a cosmological spacetime (see the works of Bernal and Sánchez \cite{bernal_smooth_2003, bernal_smoothness_2005}). Warped product spacetimes fall into this category as well, and future developments of compact initial data sets \((M,h,k)\) are always locally cosmological spacetimes near the initial data set as shown by Choquet-Bruhat and Geroch \cite{Choquet-Bruhat1969}. More precisely, we will be considering cosmological spacetimes where \(M\) is a compact manifold and satisfying the following assumption:
\begin{equation}\label{eq:A}
     \text{The metric tensor } h \text{ extends continuously to } t=0 \text{ and } t=\tau_{\max} \text{ as a Riemannian metric}. \tag{A}
\end{equation}
For these spacetimes the cosmological time is simply $\tau=t$ which is automatically regular when the slices are compact (cf.~\Cref{lemma:regular_cosmotime_when_compact_slices}). Thus the null distance \(\d_g\) with respect to \(\tau\) is definite and encodes causality by \cite{sakovich_null_2023}. Hypothesis \eqref{eq:A} encodes the reading of these spaces as future developments of initial data.

We first establish in \Cref{thm:compactness_nullcompletion} that, under natural assumptions on their Lorentzian metrics, cosmological spacetimes with compact slices (cf.~\Cref{def:generalized_product}) are causally-null compactifiable in the sense of \cite[Def. 1.1]{sakovich_introducing_2024}. More precisely, we prove that the metric completion of the associated metric spaces endowed with the null distance \(\d_g\) is actually bi-Lipschitz equivalent to the topological product \([0, \tau_{\max}] \times M\) endowed with some taxi distance:
\begin{theorem}\label{thm:compactness_nullcompletion}
    Let $N=(0,\tau_{\max})\times M$ be a cosmological spacetime with compact slices satisfying assumption \eqref{eq:A}. Then, the metric completion \((\bar{N},\d_g)\) of the space \((N,\d_g)\) is compact. Moreover, fixing \(t_0 \in [0, \tau_{\max}]\), \((\bar{N},\d_g)\) is bi-Lipschitz equivalent to the space \(([0,\tau_{\max}] \times M, d)\), where
        \[
        d((t,x),(s,y)) = \abs{t-s} + d_{t_0}(x,y),
        \]
    where \(d_{t_0}\) is the distance induced on \(M\) by the Riemannian metric $h_{t_0}=h_{\vert \{t_0\}\times M}$.
\end{theorem}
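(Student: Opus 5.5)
The plan is to show that on $N=(0,\tau_{\max})\times M$ the null distance $\d_g$ (with respect to $\tau=t$) is bi-Lipschitz equivalent to the taxi distance $d$ restricted to $N$. Granting this, the completion $(\bar N,\d_g)$ is bi-Lipschitz equivalent to the completion of $(N,d)$. That completion is $([0,\tau_{\max}]\times M,d)$, since $M$ is compact, $[0,\tau_{\max}]$ is the closure of $(0,\tau_{\max})$, and the taxi distance is complete on the product. The latter space is compact as a product of compact spaces, so compactness of $(\bar N,\d_g)$ follows because bi-Lipschitz maps preserve compactness.

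\textbf{Uniform comparison of the slice metrics.} By assumption \eqref{eq:A}, $h$ extends continuously to the compact set $[0,\tau_{\max}]\times M$ as a Riemannian metric. Comparing with $h_{t_0}$ on the compact unit sphere bundle of $h_{t_0}$, we get constants $0<\lambda\le\Lambda$ with
\[
\lambda^2 h_{t_0}(v,v)\le h_{(t,x)}(v,v)\le \Lambda^2 h_{t_0}(v,v)
\]
for all $t\in[0,\tau_{\max}]$, $x\in M$ and $v\in T_xM$. Hence the Lorentzian metric $g$ is squeezed between the two ``static'' metrics $g_\lambda=-dt^2+\lambda^2h_{t_0}$ and $g_\Lambda=-dt^2+\Lambda^2h_{t_0}$. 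Their cones are ordered: $g_\Lambda$-causal vectors are $g$-causal, and $g$-causal vectors are $g_\lambda$-causal.

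\textbf{Upper bound $\d_g\le C d$.} Take $(t,x),(s,y)\in N$ and a minimizing $h_{t_0}$-geodesic $\sigma$ from $x$ to $y$ of length $L=d_{t_0}(x,y)$. Build a piecewise causal zigzag. First go vertically in $t$ from $t$ to a suitable time. Then follow $\sigma$ with two null-ish segments for $g_\Lambda$, i.e.\ alternately future and past directed with $|\dot t|=\Lambda|\dot\sigma|_{h_{t_0}}$; these are $g$-causal by the cone ordering. Finally go vertically to $s$. The null length of the vertical parts is at most $|t-s|+2\Lambda L$. The zigzag's null length is $\Lambda L$, and it can be done with $t$ staying inside $(0,\tau_{\max})$ by using small up-down oscillations (each piece of height at most $\varepsilon$) around an interior time. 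So $\d_g\le |t-s|+3\Lambda L\le \max(1,3\Lambda)\,d$. Some care is needed near the boundary times, but oscillating with small amplitude handles it: the null length is independent of the amplitude.

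\textbf{Lower bound $d\le C\d_g$.} This is the step I expect to require the most care. First, $|t-s|\le \d_g$ holds always, since every piecewise causal curve has null length at least the total variation of $\tau$. Next, for a piecewise causal curve $\beta=(\beta_t,\beta_x)$, each causal piece is $g_\lambda$-causal, so $\lambda|\dot\beta_x|_{h_{t_0}}\le|\dot\beta_t|$. Integrating gives $\lambda\,d_{t_0}(x,y)\le \lambda L_{h_{t_0}}(\beta_x)\le \hat L(\beta)$. Taking the infimum yields $d_{t_0}(x,y)\le \lambda^{-1}\d_g$, and therefore $d\le (1+\lambda^{-1})\d_g$. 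The technical point is regularity: piecewise causal curves are only Lipschitz/absolutely continuous. The pointwise inequality holds a.e., and the $h_{t_0}$-length of the projected curve is well defined, so this should be routine.

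Finally, the identity map $(N,\d_g)\to(N,d)$ is bi-Lipschitz, hence it extends to a bi-Lipschitz bijection of completions. Identifying the completion of $(N,d)$ with $[0,\tau_{\max}]\times M$ gives the claimed equivalence, and compactness follows.
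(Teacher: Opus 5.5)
Your argument is correct. Its architecture matches the paper's: a uniform comparison of the slice metrics, a two-sided bi-Lipschitz comparison of $\hat d_g$ with the taxi distance on $N$, and extension to completions. Your lower bound is essentially the paper's argument, with your $\lambda$ playing the role of the paper's $c=\min\underline{C}$.

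The genuine difference is the upper bound. The paper obtains it from \Cref{thm:generalised_upper_bound}, i.e.\ Nigri's theorem, for which the authors give a long alternative proof via a Gauss-lemma splitting and ODE reparametrizations. That yields the sharp slice estimate $\hat d_g((t_0,x),(t_0,y))\le d_{t_0}(x,y)$ of \Cref{cor:bound_nulldist_basedist}, and \Cref{corollary:lip_embedding_above} then follows by the triangle inequality. You instead exploit the global product structure. Since $h_t\le\Lambda^2h_{t_0}$, piecewise $g_\Lambda$-null zigzags over an $h_{t_0}$-geodesic are $g$-causal, and their null length is $\Lambda\, d_{t_0}(x,y)$ regardless of amplitude. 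This is much more elementary and self-contained.

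Your constant $3\Lambda$ is needlessly lossy, and the ``vertical parts'' bookkeeping is vague. Zigzag with small amplitude around the starting time $t$ itself, then move vertically to $s$. This gives $\hat d_g\le\abs{t-s}+\Lambda\, d_{t_0}(x,y)$, which is exactly the paper's constant $\max(1,C)$, because $C=\max_t\overline{C}(t)=\Lambda$.

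The paper's route buys generality (arbitrary temporal functions whose gradient has constant norm on a level set) and sharpness. The sharp constant $1$ is reused later in \Cref{lem:bound_null_distance}, \Cref{lem:no-minimal-curve} and \Cref{prop:explicit_null_distance}. Your method recovers even this in the cosmological setting: localize the comparison to a slab $[t_0-\varepsilon,t_0+\varepsilon]\times M$, where the optimal constant $\Lambda_\varepsilon\to1$ by continuity, and let $\varepsilon\to0$.

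Finally, your regularity concern in the lower bound is unnecessary. Piecewise causal curves are piecewise smooth by \Cref{def:piecewise-causal}. The only point needed is that $t$ is monotone on each causal piece, so that $\abs{\Delta t}=\int\abs{\dot t}$ on that piece.
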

\noindent
This result sharpens \cite[Theorems 3.1, 3.10]{sakovich_introducing_2024}, identifying explicitly the topology of the completion without the requirement that the metric extends smoothly \say{below} the initial data set. An important ingredient used in the proof is a result proven by Nigri in \cite[Theorem 1.2]{nigri2025nulldistancetemporalfunctions}, for which we present an alternative proof.

We then explore timed-Hausdorff and future developed convergence (cf.\ \cite[Definition~4.22]{sakovich_introducing_2024} and \Cref{def:fd_gh_convergence}) of sequences of cosmological spacetimes, with fixed base manifold and varying Lorentzian metrics. In the spirit of \cite{perales_monotone_2025}, we give a sufficient condition for the family of induced null distances to be monotonically increasing and converging uniformly to a limit distance: we impose that the \say{Riemannian} components of the Lorentzian metric, with respect to the splitting induced by the cosmological spacetime structure, are monotonically increasing and uniformly bounded. More precisely, we assume that:
\begin{equation}\label{eq:B}
\begin{rcases}
    1.&\text{Each element of the sequence satisfies \eqref{eq:A}};\\
    2.&\text{The sequence is monotone, namely } \forall j \in \N, \forall t \in [0,\tau_{\max}],\: h_{j,t} \leq h_{j+1,t}\:;\\
    3.&\text{There exists a Riemannian metric } \tilde{h} \text{ on } M \text{ such that for each } j \in \N \text{ and }\quad\\ 
    & t\in[0,\tau_{\max}]\text{ we have } h_{j,t} \leq \tilde{h} \text{ on } TM.
\end{rcases}\tag{B}
\end{equation}\label{itemize:list_of_assumptions}
Under this set of assumptions we get the following result:

\begin{theorem}\label{thm:future_developed_convergence}
    Let $(N,g_j)$ be a sequence of cosmological spacetimes, i.e., 
    \[
    N=(0,\tau_{\max})\times M, \qquad g_j=-dt^2+h_{j,(t,x)}
    \]
    satisfying $h_j(\partial_t,v)=0$ for every $v\in TN$ and satisfying assumption \eqref{eq:B}.
    Denote by $d_\infty$ the distance obtained as pointwise limit of the null distances $\d_j$. Then the convergence of the distances is actually uniform and the sequence of associated timed metric spaces converges in the future developed Gromov-Hausdorff and timed-Hausdorff sense to the compact space $([0,\tau_{\max}] \times M, d_\infty)$.
 \end{theorem}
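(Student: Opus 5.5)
The plan is to work throughout on the fixed compact set $X=[0,\tau_{\max}]\times M$. By \Cref{thm:compactness_nullcompletion}, each completion $(\bar N,\d_j)$ is canonically identified with $X$, and each $\d_j$ is bi-Lipschitz to the taxi distance $d((t,x),(s,y))=|t-s|+d_{t_0}(x,y)$. The first step is \emph{monotonicity}. Since the time function $\tau=t$ is the same for every $j$, the null length of a piecewise causal curve is just the total variation of $t$ along it. If $h_{j,t}\le h_{j+1,t}$, then every $g_{j+1}$-causal vector is $g_j$-causal: $dt^2\ge h_{j+1}(v,v)\ge h_j(v,v)$. So the class of admissible piecewise causal curves shrinks as $j$ grows, and $\d_j\le \d_{j+1}$ pointwise on $N$, hence also on $X$ by continuity.

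The second step is a \emph{uniform upper bound and equicontinuity}. Compare with the product metric $\tilde g=-dt^2+\tilde h$, whose null distance $\tilde d$ satisfies $\d_j\le \tilde d$ by the same argument. Since $\tilde h$ is $t$-independent and $M$ is compact, $\tilde d$ is bi-Lipschitz to $|t-s|+d_{\tilde h}(x,y)$ (again via \Cref{thm:compactness_nullcompletion}, or by a direct zigzag estimate). For a lower bound, $\d_j\ge \d_1$, and $\d_1$ is bi-Lipschitz to the taxi distance.

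The key point is then \emph{equicontinuity}. By the triangle inequality, $|\d_j(p,q)-\d_j(p',q')|\le \d_j(p,p')+\d_j(q,q')\le \tilde d(p,p')+\tilde d(q,q')$. So the family $\{\d_j\}$ is uniformly Lipschitz with respect to the fixed metric $\tilde d\times\tilde d$ on the compact space $X\times X$. The pointwise limit $d_\infty=\sup_j \d_j$ exists by monotonicity and boundedness. Dini's theorem (monotone continuous functions converging to a continuous limit on a compact set) or Arzelà–Ascoli upgrades pointwise convergence to uniform convergence. The limit $d_\infty$ is a distance: symmetry and the triangle inequality pass to the limit, and definiteness follows from $d_\infty\ge \d_1$. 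Moreover $d_\infty$ is bi-Lipschitz to the taxi distance, so $(X,d_\infty)$ is compact and induces the same topology.

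The third step is the convergence of timed metric spaces. Take the identity maps $X\to X$ with time function $\tau=t$ preserved exactly. Uniform convergence gives $\varepsilon_j=\sup|\d_j-d_\infty|\to0$. For timed-Hausdorff convergence, embed $(X,\d_j)$ and $(X,d_\infty)$ into a common space $Z=X\sqcup X$ with the admissible gluing metric $d_Z(p,q')=\inf_r(\d_j(p,r)+d_\infty(r,q'))+\varepsilon_j$, which is the standard construction showing that $d_{GH}\le\varepsilon_j/2$. The time functions agree on corresponding points, so the timed-Hausdorff distance is bounded by $\varepsilon_j$ plus zero time discrepancy. For future developed convergence (\Cref{def:fd_gh_convergence}), one must also control the future developments, i.e.\ the sets $\{\tau\ge t\}$ or the causal futures encoded by $\d_j$. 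Since the identity maps preserve $\tau$ and distort distances by at most $\varepsilon_j$, the future developed distance is bounded by a constant times $\varepsilon_j$. Alternatively, one can invoke Perales' result that future developed convergence implies timed-Hausdorff convergence, so only the former needs to be checked.

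I expect the \textbf{main obstacle} to be the future developed part: matching the precise definition of \cite{sakovich_introducing_2024}, which involves the future development of each point or the timed structure on the whole completion, including the boundary $t=0$. The identity map must be checked to produce the required correspondence there. The rest, namely monotonicity, the uniform bound via $\tilde h$, and Dini's theorem, should be routine.
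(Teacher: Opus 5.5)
Your first two steps are sound and somewhat more self-contained than the paper. The equi-Lipschitz bound $|\hat{d}_j(p,q)-\hat{d}_j(p',q')|\le \tilde d(p,p')+\tilde d(q,q')$ together with Dini/Arzel\`a--Ascoli replaces the paper's appeal to Perales' monotone convergence theorem. Definiteness via $d_\infty\ge\hat{d}_1$ replaces the diameter-bound lemma. Your identity-gluing bound for timed-Hausdorff closeness is also fine. One small slip there: with the additive constant $\varepsilon_j$ you get Hausdorff distance at most $\varepsilon_j$, not $\varepsilon_j/2$, which is harmless.

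The genuine gap is the future developed part, which you leave open and partly misread. By \Cref{def:fd_gh_convergence}, future developed convergence is not about the sets $\{\tau\ge t\}$ or causal futures. It requires two things:
\begin{enumerate}
\item[(a)] every $(\bar N,\hat{d}_j,t)$ and the limit $(X,d_\infty,t)$ are future developed timed metric spaces, i.e.\ they satisfy $t(p)=\min\{d(p,q): q\in\{0\}\times M\}$ (and $|\{0\}\times M|\ge 2$);
\item[(b)] the metric pairs $(X,\hat{d}_j,\{0\}\times M)$ converge to $(X,d_\infty,\{0\}\times M)$.
\end{enumerate}
Your proposal never establishes (a). In particular, nothing shows that $d_\infty$ still realizes $t$ as the distance to the initial slice, which is what the paper proves in \Cref{thm:initial_data_property_limit} (after \Cref{lem:initial_data_property} for each $j$). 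Without it, neither the definition nor Perales' theorem that future developed convergence implies timed-Hausdorff convergence can be invoked. The remark that ``the identity maps preserve $\tau$'' does not supply it, since being future developed is a property of each space, not of the correspondence.

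Both pieces are short to supply in this setting. Since $h_j(\partial_t,\cdot)=0$, the vertical curves $s\mapsto(s,x)$ are future directed timelike for every $g_j$, so $\hat{d}_j((t,x),(s,x))=|t-s|$. This extends by continuity to $s=0$ on the completion and passes to the limit, giving $d_\infty((t,x),(0,x))=t$. Conversely, $d_\infty(p,q)\ge\hat{d}_j(p,q)\ge|t(p)-t(q)|=t(p)$ for every $q$ with $t(q)=0$. So the minimum is attained at $(0,x)$ for each $j$ and in the limit, which is simpler than the paper's subsequence argument. For (b), your gluing maps $\{0\}\times M$ to itself under both embeddings, with each point at distance at most $\varepsilon_j$ from its copy. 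Hence the Hausdorff distance between the initial sets is also at most $\varepsilon_j$, exactly as in \Cref{prop:metricPairConv} via the Perales taxi embedding.
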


 We remark that there exists a pre-compactness result for timed-Hausdorff convergence presented in \cite{che-perales-sormani} which holds for a more general class of timed metric spaces. In our setting, \Cref{thm:future_developed_convergence} actually gives convergence of the full sequence, along with information about the topology of the limit space.

We then shift our attention to what the \emph{causally-null} theory of \cite{che_perales_2026} sees in the limit. Firstly, we give a sufficient \say{causal connectivity} condition for a general timed metric space to have a causally-null completion (\Cref{prop:causally_null_completions}) and, as a consequence, we obtain the following:
\begin{theorem}\label{thm:cosmological_spacetimes_are_causally_null}
    Let \((N,g)\) be a cosmological spacetime with compact slices satisfying assumption \eqref{eq:A}. Then its associated metric space \((\bar{N},\d)\) is causally-null.
\end{theorem}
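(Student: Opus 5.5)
The plan is to deduce the statement from \Cref{prop:causally_null_completions}: we only need to check that $(N,\d)$, with the timed structure given by the cosmological time $\tau=t$, satisfies that proposition's causal connectivity hypothesis. Recall the relevant notions. The causal relation induced by a timed metric space is $p\preceq q$ if and only if $d(p,q)=\tau(q)-\tau(p)$. The causally-null distance $\hat{d}_\tau(p,q)$ is the infimum, over finite chains $p=x_0,\dots,x_k=q$ whose consecutive points are related by $\preceq$ (in either order), of $\sum_i\abs{\tau(x_{i+1})-\tau(x_i)}$. The space is causally-null when $d=\hat{d}_\tau$.

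One inequality is automatic. For consecutive related points we have $d(x_i,x_{i+1})=\abs{\tau(x_{i+1})-\tau(x_i)}$, so the triangle inequality gives $d\le\hat{d}_\tau$. The content is the reverse inequality $\hat{d}_\tau\le d$, first on $N$ and then on the completion.

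\textbf{Interior points.} On $N$ the cosmological time is regular by \Cref{lemma:regular_cosmotime_when_compact_slices}, so $\d_g$ is definite and encodes causality by \cite{sakovich_null_2023}. Hence $p\le_g q$ in the Lorentzian sense exactly when $\d_g(p,q)=\tau(q)-\tau(p)$, i.e.\ $p\preceq q$. Given $p,q\in N$ and $\varepsilon>0$, choose a piecewise causal curve $\beta$ from $p$ to $q$ whose null length is at most $\d_g(p,q)+\varepsilon$. Let $x_i$ be its breakpoints. Each causal piece joins $x_i$ and $x_{i+1}$ causally, so $x_i\preceq x_{i+1}$ or the reverse. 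The chain sum is exactly the null length of $\beta$, which gives $\hat{d}_\tau\le\d_g$ on $N$.

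\textbf{Completion.} By \Cref{thm:compactness_nullcompletion}, $\bar N$ is identified with $[0,\tau_{\max}]\times M$, so the new points are the boundary slices $t=0$ and $t=\tau_{\max}$. The key observation is that these points are causally accessible from the interior along vertical curves. For $x\in M$, the curve $s\mapsto(s,x)$ is timelike with null length $\abs{s-s'}$. Therefore $\d_g((s,x),(s',x))=\abs{s-s'}$, and by continuity this identity persists as $s\to0$ or $s\to\tau_{\max}$. Thus $(0,x)\preceq(s,x)\preceq(\tau_{\max},x)$ in $\bar N$ for every $s$. This is the kind of accessibility condition we expect \Cref{prop:causally_null_completions} to require: each completion point $\bar p$ is joined by a $\preceq$-relation to interior points $p_n\to\bar p$ with $\abs{\tau(\bar p)-\tau(p_n)}\to0$.

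With this in hand, for $\bar p,\bar q\in\bar N$ we proceed as follows.
\begin{enumerate}
\item Take $p_n=(\tau(\bar p)\pm 1/n,x)$ and $q_n$ defined analogously, so that $p_n,q_n\in N$.
\item Concatenate the one-step chains $\bar p\to p_n$ and $q_n\to\bar q$ with a near-optimal interior chain from $p_n$ to $q_n$.
\item This yields $\hat{d}_\tau(\bar p,\bar q)\le 2/n+\d_g(p_n,q_n)+\varepsilon$.
\item Let $n\to\infty$ and then $\varepsilon\to0$.
\end{enumerate}

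\textbf{Main obstacle.} The delicate step is the precise verification that $\preceq$ computed in the completion $(\bar N,\d_g)$ agrees with what is needed at boundary points. In particular, the limiting identity $\d_g((0,x),(s,x))=s$ must hold in the metric completion, not merely along a Cauchy sequence. I would handle this using the bi-Lipschitz identification of \Cref{thm:compactness_nullcompletion} together with continuity of $\d_g$ and of $\tau$ on $\bar N$, where $\tau$ is $1$-Lipschitz with respect to $\d_g$.
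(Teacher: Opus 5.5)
Your proposal is correct and follows the paper's route. The paper also reduces to \Cref{prop:causally_null_completions}: it checks the accessibility hypothesis at the boundary points $(0,x)$ and $(\tau_{\max},x)$ using vertical segments, obtains $\d((0,x),(s,x))=s$ by passing to the limit along $(s',x)$ with $s'\to 0$ in the completion, and your inlined chain-concatenation argument is exactly that proposition's proof. Invoking causality encoding for the interior chains is harmless but unnecessary, since only the trivial implication $p\leq q\Rightarrow \d(p,q)=\tau(q)-\tau(p)$ is used there.
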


Subsequently, we characterize the causally-null distance induced by the limit space of sequences as above as the extension to the metric completion of the null distance associated to the (non necessarily smooth) limit metric: 
\begin{theorem}\label{thm:causally_null_limit_is_null_distance_of_limit_tensor}
    Let \(\{(N,g_i)\}_{i \in \N}\) be a sequence of cosmological spacetimes with compact slices satisfying assumption \eqref{eq:B}. Consider both the limit distance \(d_\infty\) given by \Cref{thm:future_developed_convergence} and the null distance \(\d_\infty\)  associated to the limit metric \(g_\infty\). Then the causally-null distance \(\hat{d}_{d_\infty,\tau}\) coincides with the extension of the null distance \(\d_\infty\) to the completion \((\bar{N},\d_{\infty})\).
\end{theorem}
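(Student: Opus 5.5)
Before planning, I fix what the objects are, recalled from \cite{che_perales_2026}. Given a timed metric space $(X,d,\tau)$, the causal relation is $p\le q$ iff $\tau(q)-\tau(p)=d(p,q)$. The causally-null distance $\hat{d}_{d,\tau}$ is the infimum, over finite chains $p=x_0,\dots,x_k=q$ in which consecutive points are causally related, of $\sum_i\abs{\tau(x_i)-\tau(x_{i-1})}$. By construction $\hat{d}_{d,\tau}\ge d$.

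The limit tensor is $g_\infty=-dt^2+h_{\infty}$ with $h_{\infty,t}=\sup_j h_{j,t}$. This is a monotone limit, bounded by $\tilde h$, and it is only lower semicontinuous in general. Its null distance $\d_\infty$ is defined through piecewise causal curves for $g_\infty$, i.e.\ curves with $\abs{\dot x}_{h_{\infty,t}}\le\abs{\dot t}$. I will show the two inequalities $\hat d_{d_\infty,\tau}\le \d_\infty$ and $\hat d_{d_\infty,\tau}\ge\d_\infty$ on the dense set $N$, and then extend to $\bar N$ by continuity. Continuity holds because both distances are bi-Lipschitz to the taxi distance, by \Cref{thm:compactness_nullcompletion} and \Cref{thm:future_developed_convergence}.

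\textbf{Step 1: $\hat d_{d_\infty,\tau}\le\d_\infty$.} Take a piecewise $g_\infty$-causal curve from $p$ to $q$, broken into monotone causal pieces. For each such piece from $x$ to $y$, I claim $d_\infty(x,y)=\abs{\tau(y)-\tau(x)}$. The piece is also $g_j$-causal for every $j$, because $h_j\le h_\infty$. Hence $\d_j(x,y)=\abs{t_y-t_x}$ by the causality-encoding property of \cite{sakovich_null_2023}. Passing to the limit gives the claim. So the endpoints of the pieces are $d_\infty$-causally related, and the chain has length equal to the null length of the curve. Taking the infimum over curves gives the inequality.

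\textbf{Step 2: $\hat d_{d_\infty,\tau}\ge\d_\infty$.} Take any $d_\infty$-causal chain. It suffices to show that each $d_\infty$-causal pair $x\le y$ can be joined by a $g_\infty$-causal curve; then $\d_\infty(x,y)\le\tau(y)-\tau(x)$, and summing over the chain gives the bound. Suppose $d_\infty(x,y)=t_y-t_x$. Since $\d_j\le d_\infty$ and $\d_j\ge\abs{\Delta t}$, we get $\d_j(x,y)\to t_y-t_x$. I would then use the structure of null distance on cosmological spacetimes: a near-optimal piecewise causal path for $\d_j$ with total $\tau$-variation $t_y-t_x+\varepsilon_j$ can be replaced by a curve that is $g_j$-causal except for an error of order $\varepsilon_j$. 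Concretely, this gives a curve $\gamma_j(t)=(t,c_j(t))$ with $\int\abs{\dot c_j}_{h_{j,t}}\,dt\le t_y-t_x+O(\varepsilon_j)$, or rather with $\abs{\dot c_j}_{h_{j,t}}\le 1$ after reparametrizing over a slightly longer time interval.

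Because $h_j\ge h_1$, the curves $c_j$ are uniformly Lipschitz with respect to $h_1$. Arzel\`a--Ascoli then yields a limit curve $c$. Fix $k$. For $j\ge k$ we have $\abs{\dot c_j}_{h_{k,t}}\le\abs{\dot c_j}_{h_{j,t}}$, and lower semicontinuity of length under uniform convergence gives $\abs{\dot c}_{h_{k,t}}\le1$ almost everywhere. Letting $k\to\infty$, monotone convergence gives $\abs{\dot c}_{h_{\infty,t}}\le1$, so $c$ is $g_\infty$-causal from $x$ to $y$.

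\textbf{Main obstacle.} I expect the hardest part to be making Step 2 rigorous in low regularity. Two issues need care. First, a $\d_j$-almost-optimal path must be converted into a nearly causal graph over $t$, with time-dependent $h_j$; I would use the compactness from \Cref{thm:compactness_nullcompletion} and the continuity of $t\mapsto h_{j,t}$. Second, the definition of $\d_\infty$ for the merely semicontinuous tensor $g_\infty$ must admit Lipschitz causal curves in the sense used above. If the paper defines $\d_\infty$ via another curve class, I would add an approximation lemma at that point.
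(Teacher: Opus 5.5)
Your strategy is essentially the paper's. Step 1 is the inclusion of the $g_\infty$-causal relation into $\le_{d_\infty,\tau}$, which follows from $h_j\le h_\infty$. Step 2 is the reverse inclusion, obtained via a limit curve that is $g_k$-causal for every $k$. The paper packages the same content as $\le_{\hat d_\infty,\tau}=\bigcap_j\le_{\hat d_j,\tau}=\le_{d_\infty,\tau}$, together with causal-nullness of $(\bar N,\hat d_\infty)$.

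\textbf{The gap: passing from $N$ to $\bar N$.} You claim $\hat d_{d_\infty,\tau}$ is bi-Lipschitz to the taxi distance by \Cref{thm:compactness_nullcompletion} and \Cref{thm:future_developed_convergence}. Those results concern $\hat d_j$ and $d_\infty$, not the chain-defined $\hat d_{d_\infty,\tau}$. Knowing $\hat d_{d_\infty,\tau}=\hat d_\infty$ on $N\times N$ says nothing about its values on $\{0,\tau_{\max}\}\times M$. This is exactly the failure mode of \Cref{ex:triangle}: there the causally-null distance agrees with $\hat d$ on the interior of a compact completion, yet is infinite at a boundary point.

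Your reduction to $N$ also does not decouple. For $p,q\in N$, the admissible chains for $\hat d_{d_\infty,\tau}$ live in $\bar N$ and may pass through boundary points. So Step 2 must handle $d_\infty$-causal pairs with an endpoint on the boundary in any case.

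The fix is what the paper does:
\begin{itemize}
\item Upper bound. Every $\hat d_j((0,x),(\varepsilon,x))=\varepsilon$, so $(0,x)\le_{d_\infty,\tau}(\varepsilon,x)$, and similarly at $\tau_{\max}$. Concatenating chains gives $\hat d_{d_\infty,\tau}(p,q)\le\abs{\tau(p)-\tau(p')}+\hat d_{d_\infty,\tau}(p',q')+\abs{\tau(q)-\tau(q')}$ for vertically shifted $p',q'\in N$. Step 1 and continuity of the extended $\hat d_\infty$ then give the upper bound on $\bar N$; this is the argument of \Cref{prop:causally_null_completions}.
\item Lower bound. You need $g_j$-causal curves also for pairs involving boundary points; these come from \Cref{lemma:causal_relation_completion_implies_causal_curves}. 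The resulting limit curve gives $\hat d_\infty(x,y)\le\tau(y)-\tau(x)$ at the endpoints, again by continuity of the extended $\hat d_\infty$.
\end{itemize}

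\textbf{Step 2 is more complicated than it needs to be.} From $t_y-t_x\le\hat d_j(x,y)\le d_\infty(x,y)=t_y-t_x$ you get $\hat d_j(x,y)=t_y-t_x$ for every $j$, not merely in the limit. Causality encoding of $\hat d_j$ then hands you genuine $g_j$-causal curves. This is the nontrivial direction; Step 1 only needs the trivial one, that causally related points have null distance equal to the time difference. After that, the paper runs precisely your Arzel\`a--Ascoli/limit-curve argument, so your first ``main obstacle'' disappears.

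With only $\hat d_j(x,y)\to t_y-t_x$, your conversion of near-optimal paths into nearly causal graphs would be in trouble. The defect must be small relative to the $t$-modulus of continuity of $h_j$, and that modulus need not be uniform in $j$ (compare \Cref{ex:allen-burtscher}).

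Finally, drop the contingency of an ``approximation lemma.'' The paper defines $\hat d_\infty$ via absolutely continuous, a.e.\ $g_\infty$-causal curves, which is consistent with your Lipschitz limits. \Cref{ex:Cantor} shows that no reduction to piecewise smooth curves is possible.
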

\noindent
This is subtle because, as \Cref{ex:AB-bis} exhibits, the limit distance can fail to be causally-null. The previous result shows that in this setting the notion of causally-null distance is able to retain information about the limit tensor, which the null distance forgets in the limit.

In the last part of this work we discuss the necessity of the assumptions we set for our results with some examples. Firstly in \Cref{ex:allen-burtscher} we discuss how one can apply \Cref{thm:future_developed_convergence} to a sequence of warped products whose warping functions are not converging uniformly, improving on \cite[Theorem 1.4]{allen_properties_2022}. Then in \Cref{ex:limit_non_compact} we show that without the assumption \eqref{eq:B} in \Cref{thm:uniform_convergence} limit spaces of such sequences need not be homeomorphic to a product. Finally, \Cref{ex:triangle} and \Cref{ex:Cantor} motivate the requirement of the accessibility condition of \Cref{prop:causally_null_completions} and the absolutely continuous regularity class of curves in \Cref{prop:causal-relation-induced-by-limit-tensor}, respectively.

\subsection*{Acknowledgements}
 The authors would like to thank Prof.\ Christina Sormani for introducing them to the topic and for the early discussions and meetings, Prof.\ Raquel Perales for the time she dedicated to meetings, revisions, and insightful suggestions that improved this paper, Prof.\ Roland Steinbauer for carefully reading the manuscript, and all of them for many helpful discussions. The authors gratefully acknowledge support from the Simons Center for Geometry and Physics, Stony Brook University, at which part of the research for this paper was performed. This research was funded by the Austrian Science Fund (FWF) [Grant DOI \href{https://www.fwf.ac.at/forschungsradar/10.55776/EFP6}{10.55776/EFP6}]. For open access purposes, the authors have applied a CC BY public copyright license to any author accepted manuscript version arising from this submission.
 
\section{Preliminaries}

\subsection{Spacetimes}

Let $N$ be a smooth manifold endowed with a Lorentzian metric $g$ of signature $(-,+,\ldots,+)$. We call tangent vectors $v\in TN$ \textit{timelike} (resp.\ \textit{spacelike}) whenever $g(v,v)<0$ (resp.\ $g(v,v)>0$). We say that $v\in TN$ is \textit{null} if $v\neq 0$ and $g(v,v)=0$. A vector is said to be \textit{causal} if it is timelike or null. Similarly, a piecewise smooth curve is said to be \textit{causal (null, timelike, spacelike)} if its tangent vector at every point of differentiability is so.

Causal vectors at a point form a set with two connected components called \textit{cones}. A globally defined continuous timelike vector field on $N$ defines a choice of cones called \textit{time orientation}. The chosen cone at each point is called \textit{future cone} and a causal vector is said to be \textit{future directed} if it lies in the future cone and \textit{past directed} otherwise. A connected Lorentzian manifold endowed with a time orientation is called a \textit{spacetime}. 

A causal curve is \textit{future directed} if all its tangent vectors are future directed. Two points $p,q\in N$ are said to be \textit{timelike related}, which is denoted by $p\ll q$, if there exists a future directed timelike curve from $p$ to $q$. Similarly, they are said to be \textit{causally related}, denoted $p\leq q$, if there is a future directed causal curve joining them or $p=q$. As usual, causal (resp.\ timelike) pasts and futures are denoted by $J^\pm(p)$ (resp.\ $I^\pm(p)$). The set of all timelike diamonds $I^+(p)\cap I^-(q)$ forms a subbasis for a topology in $N$, called Alexandrov topology. A spacetime is said to be \textit{causal} if the causal relation is antisymmetric, i.e., if $p\leq q$ and $q\leq p$ then $p=q$; \textit{strongly causal} if the Alexandrov topology coincides with the manifold topology.

A function $\tau\colon N\to \R$ which is strictly increasing along every future directed causal curve is called a \textit{generalized time function}. If $\tau$ is continuous, it is called a \textit{time function}.

A \textit{Cauchy hypersurface} is a subset $S\subset N$ which intersects every inextendible timelike curve exactly once. If a Cauchy hypersurface exists, then there is also a smooth and spacelike one, and therefore every inextendible \textit{causal} curve intersects it only once. A spacetime is said to be \textit{globally hyperbolic} if it is causal and the causal diamonds $J^+(p)\cap J^-(q)$ are compact or, equivalently, if it admits a Cauchy hypersurface.

It is now a classical result \cite{bernal_smooth_2003, bernal_smoothness_2005} that a globally hyperbolic spacetime $N$ is isometric to a manifold $\R\times M$ with Lorentzian metric $\smash{g_{(t,x)}=-f(t,x)dt^2+h_{(t,x)}}$, where $f\colon \R\times M\to (0,\infty)$ is smooth and $h$ is another metric on the product, satisfying:
\begin{enumerate}
    \item\label{item:nabla} $\nabla t$ is past directed timelike,
    \item \label{item:slices_riemannian} Each level set $M_t$ of $t$ is a Cauchy hypersurface and the restriction $h_{|M_t}=:h_t$ of $h$ to any of them is Riemannian, and
    \item\label{item:radical} The radical of $h$ is given by $\mathrm{span}(\nabla t)$, i.e., $h(\nabla t, v)=0$ for every $v\in T(\R\times M)$.
\end{enumerate}
In the special case in which, additionally, the function $f$ has no dependence on $x$, a change of variables allows one to obtain a (Lorentzian) isometry between $N$ and a product manifold
\begin{equation}\label{eq:gen_product}
(a,b)\times M, \quad \text{with metric }\: g_{(t,x)}=-dt^2+h_{(t,x)},
\end{equation}
where $-\infty\leq a<b\leq \infty$. Notice that every globally hyperbolic spacetime is conformally related to a space as in \eqref{eq:gen_product} via the conformal factor \(1/f\).

\begin{definition}\label{def:generalized_product}
    We say that a spacetime \((N,g)\) is a {\em cosmological spacetime} if \(N = I \times M\), where $I=(t_1,t_2)\subset\R$ is bounded, and its Lorentzian metric splits as
    \[
        g = -dt^2 +h_{(t,x)},
    \]
    where \(h\) satisfies \Cref{item:radical} above. We say that $N$ has {\em compact slices} if \(M\) is compact.
\end{definition}
Notice that such spacetimes are not assumed to be globally hyperbolic. \Cref{item:nabla} is trivially satisfied in this case. About \Cref{item:slices_riemannian}, it is not necessarily true that the level sets \(M_t\) are Cauchy hypersurfaces but the validity of \Cref{item:radical} ensures that each \(h_t\) is Riemannian, since the signature of \(g\) is given by \((-, +, \dots, +)\).

In this way, we distinguish these from \textit{warped products} (in which $\smash{h_{(t,x)}=f(t)h_x}$, i.e., the restrictions $h_t$ of $h$ to $M_t$ are all proportional to a fixed metric on $M$ via the projection) and \textit{products} (i.e., warped products in which $f\equiv 1$, i.e., $h$ does not depend on $t$).

\subsection{Null distance}

We now recall some notions introduced in \cite{sormani_null_2016}.

\begin{definition}\label{def:piecewise-causal}
    Let $(N,g)$ be a spacetime. A \textit{piecewise causal} curve in $N$ is a curve $\alpha\colon [a,b]\to N$ such that there is a partition $a=t_0<\ldots<t_n=b$ with $\alpha_{|(t_{i-1},t_{i})}$ smooth causal, for every $i$. Different segments may have different time-orientations.
\end{definition}

\begin{definition}\label{def:null-distance}
    Let $(N,g)$ be a spacetime endowed with a generalized time function $\tau$. 
    \begin{enumerate}
    \item Let $\alpha\colon [a,b]\to N$ be a piecewise causal curve and let $a=t_0<\ldots<t_n=b$ be a partition as in \Cref{def:piecewise-causal}. Call $\alpha_i=\alpha(t_i)$. The \textit{null length} of $\alpha$ is
    \[
    \hat{L}_\tau(\alpha)=\sum_{i=1}^n \bigl|\tau(\alpha_i)-\tau(\alpha_{i-1})\bigr|.
    \]
    \item Given two points $p,q\in N$, the \textit{null distance} between them is
    \[
    \d(p,q)=\d_{g,\tau}(p,q)=\inf
    \bigl\{
    \hat{L}_\tau(\alpha) \mid \alpha \text{ is piecewise causal from } p \text{ to } q
    \bigr\}.
    \]
    \end{enumerate}
\end{definition}

Despite its name, the null distance is, in general, not a distance. Nevertheless, it is a \textit{pseudo-distance} \cite[Lemma~3.8]{sormani_null_2016}, i.e., it satisfies all properties of a distance except, at most, \textit{definiteness}. In other words, it is possible that $\d(p,q)=0$ whereas $p\neq q$. However, in the cited article \cite{sormani_null_2016}, the authors discuss a property of the generalized time function that ensures that the null distance it induces is definite and, therefore, a distance.
\begin{definition}
    Let $(N,g)$ be a spacetime and $f\colon N\to \R$. We say that $f$ is \textit{locally anti-Lipschitz} if for every point $p\in N$ there is a neighborhood $U\ni p$ and a distance $d_U$ such that the following holds on $U$:
    \[
    q\leq r\implies d_U(q,r)\leq f(r)-f(q).
    \]
\end{definition}

Coming back to the properties of the null distance, it is evident that $\d(p,q)\geq \abs{\tau(q)-\tau(p)}$ and that equality holds whenever $p$ and $q$ are causally related. The converse implication is not always true.

\begin{definition}[{\cite{sormani_null_2016,sakovich_null_2023}}]
\label{def:causality_encoding}
    The null distance $\d$ is said to \textit{encode causality} if for every pair of points $p,q$ one has
    \[
    \d(p,q)=\tau(q)-\tau(p)\iff p\leq q.
    \]
\end{definition}

Some authors have given conditions that ensure that the null distance encodes causality. This is the case, for instance, if the time function is locally anti-Lipschitz and proper \cite[Theorem~4.1]{sakovich_null_2023}. See also the works of Burtscher and García-Heveling \cite[Theorem~1.9]{burtscher_global_2024}, and of Galloway \cite[Theorem~3]{galloway_note_2023} for refinements of that result with weaker assumptions.

In the foregoing we will consider a specific time function called \textit{cosmological time}, introduced by Andersson, Galloway and Howard in \cite{andersson_cosmological_1998}, which under some regularity assumptions induces a definite null distance encoding causality.

\begin{definition}
    Let $(N,g)$ be a smooth spacetime and denote by $\ell$ its time separation function. The \textit{cosmological time} is the map $\tau_g\colon N\to [0,\infty)$ defined for every point $q\in N$ by
    \[
    \tau_g(q)=\sup\{\ell(p,q)\mid p\leq q\}.
    \]
    We say that $\tau_g$ is \textit{regular} if it is finite and $\tau_g\to 0$ along every past directed inextendible causal curve.
\end{definition}
\begin{proposition}[{\cite[Theorem~1.2, Corollary~2.6]{andersson_cosmological_1998}}]\label{prop:properties-regular-cosmological-time}
    Let $(N,g)$ be a smooth spacetime with regular cosmological time $\tau_g$. Then $(N,g)$ is globally hyperbolic, $\tau_g$ is a locally Lipschitz\footnote{i.e., given a compact neighborhood of a point, the restriction of $\tau_g$ to the neighborhood is Lipschitz with respect to the distance induced by \textit{any} auxiliary Riemannian metric defined on it, as all such distances are bi-Lipschitz equivalent.} time function and, moreover, its gradient exists almost everywhere and is (past directed) timelike and locally bounded away from the light cones. Moreover, the level sets $M_t=\tau_g^{-1}(t)$, if non-empty, are Cauchy hypersurfaces.
\end{proposition}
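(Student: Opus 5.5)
The plan is to follow Andersson, Galloway and Howard and build everything on one structural object: for each point $q$, a past inextendible timelike curve realizing $\tau_g(q)$. The regularity hypotheses on $\tau_g$ are exactly what is needed to produce it.

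First, fix $q$ and take $p_n\leq q$ with $\ell(p_n,q)\to\tau_g(q)<\infty$. Join them by causal curves $\gamma_n$ whose lengths approach $\ell(p_n,q)$. I then apply the limit curve theorem to obtain a limit causal curve $\gamma$ ending at $q$.

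The points $p_n$ cannot stay in a compact set. Otherwise a limit point $p$ would satisfy $\ell(p,q)\geq\tau_g(q)$ by upper semicontinuity of length. Extending backwards a little would then give a point $p'\ll p$ with $\ell(p',q)>\tau_g(q)$, which is impossible. Hence $\gamma$ is past inextendible. Upper semicontinuity of the Lorentzian length then gives $L(\gamma|_{[r,q]})=\tau_g(q)-\tau_g(r)$ along $\gamma$, with $\tau_g(r)\to0$ to the past; here I use regularity to rule out a positive residual. So $\gamma$ is a maximizing timelike geodesic, which I call a \emph{generator}, satisfying $\ell(r,q)+\tau_g(r)=\tau_g(q)$.

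Next I show that $\tau_g$ is a time function. If $p<q$ are causally related, take a generator $\gamma$ of $p$ and a point $r\ll p$ on it. The concatenation of $\gamma$ from $r$ to $p$ with a causal curve from $p$ to $q$ is broken at $p$, or at worst is not an unbroken maximizer. Hence $\ell(r,q)>\ell(r,p)$, and so
\[
\tau_g(q)\ \geq\ \tau_g(r)+\ell(r,q)\ >\ \tau_g(r)+\ell(r,p)\ =\ \tau_g(p).
\]

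For continuity I split into two halves.
\begin{itemize}
\item Lower semicontinuity is immediate, since $\tau_g$ is a supremum of the lower semicontinuous functions $\ell(p,\cdot)$.
\item Upper semicontinuity uses generators of $q_n\to q$. A limit curve of these generators is again past inextendible and ends at $q$, so lengths pass to the limit.
\end{itemize}

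For local Lipschitz continuity I work in a chart around $q$ with a fixed cone $C$ slightly narrower than the light cones. I aim for a two-sided cone estimate:
\[
\tau_g(q+v)\geq\tau_g(q)+c\lvert v\rvert \quad\text{for } v\in C,
\]
and the reversed inequality for $-v\in C$. Both follow from the reverse triangle inequality $\tau_g(q')\geq\tau_g(q)+\ell(q,q')$ together with the uniform lower bound $\ell(q,q+v)\geq c\lvert v\rvert$ in the chart.

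The upper Lipschitz bound in spacelike directions comes from the generator points. Choose $r$ on a generator of $q'$ at a fixed small Lorentzian distance $\delta$ in the past. Using uniform control of the past cones near $q$ (via compactness of the generators' initial directions), I compare $\ell(r,\cdot)$ at nearby points. This yields $\lvert\tau_g(q')-\tau_g(q'')\rvert\leq K\lvert q'-q''\rvert$. Rademacher's theorem then gives differentiability almost everywhere. The cone estimate forces $d\tau_g(v)\geq c\lvert v\rvert$ on $C$, so the gradient is past directed timelike and uniformly bounded away from the light cone.

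Global hyperbolicity follows from the existence of a continuous time function, which gives strong causality, together with compactness of causal diamonds $J^+(p)\cap J^-(q)$. For compactness, suppose a sequence in the diamond has no convergent subsequence. Then causal curves from $p$ to $q$ through these points produce, by the limit curve theorem, a past inextendible causal curve inside $J^-(q)\cap J^+(p)$. Along it $\tau_g\geq\tau_g(p)>0$, contradicting regularity.

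Finally, each nonempty level set $M_t$ is achronal because $\tau_g$ is strictly increasing along causal curves. It is met by every inextendible causal curve: to the past $\tau_g\to0<t$, while to the future $\tau_g$ exceeds $t$, by comparing with a generator of a point of $M_t$ and using continuity and the Cauchy-time structure. So $M_t$ is Cauchy.

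I expect the main obstacle to be the locally uniform Lipschitz estimate in spacelike directions. It requires uniform control over where generators of nearby points start, and I would handle it by the compactness of generator directions coming from the limit curve argument.
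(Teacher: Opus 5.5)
You follow the Andersson--Galloway--Howard route that the paper only cites, but two steps fail as written. First, the future half of your last step cannot be proved, because it is false in this generality. Take $N=\{(t,x):0<t<f(x)\}\subset\R^{1,1}$ with the Minkowski metric, where $f$ is smooth, $0\le f'\le 1/2$, $f=1$ for $x\le 0$ and $f=2$ for $x\ge 3$. Then $\tau_g=t$: causal curves have length at most their $t$-increment, and vertical segments lie in $N$. Since the graph of $f$ is spacelike, $f(x)-t$ increases along past directed causal curves, so $t\to0$ along every past inextendible one, and $\tau_g$ is regular. Yet the inextendible timelike curve $t\mapsto(t,-5)$, $t\in(0,1)$, never meets the nonempty level set $\tau_g^{-1}(3/2)$. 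What your argument does prove is that $M_t$ is acausal and is met by every past inextendible causal curve starting in $\{\tau_g\ge t\}$, i.e.\ $M_t$ is \emph{future} Cauchy. That is exactly what \Cref{prop:encoding_causality} requires. For the compact-slice spacetimes of the paper the full Cauchy property does hold, but via the imprisonment argument of \Cref{lemma:regular_cosmotime_when_compact_slices} run towards the future, not via regularity.

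Second, and more seriously for the rest of the proof, the passage of lengths to the limit is asserted rather than proved, and as written it is circular. Upper semicontinuity of length applies only on compact parameter intervals. To obtain $L(\gamma|_{[r,q]})\ge\tau_g(q)-\tau_g(r)$ for your limit curve, you need $\tau_g(\gamma_n(s))$ to become small as $s\to-\infty$, uniformly in large $n$; this quantity bounds the length of $\gamma_n$ before the parameter $s$. Your upper semicontinuity step, $\limsup_n\tau_g(q_n)\le\tau_g(q)$, needs exactly the same control. Regularity applied to $\gamma$ only gives $\tau_g(\gamma(s))\to0$. Since $\tau_g$ is so far only lower semicontinuous, this says nothing about $\tau_g(\gamma_n(s))$. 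Upper semicontinuity would suffice, but you only derive it later from this same limit argument.

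One way to close the gap is as follows. Suppose $\limsup_n\tau_g(\gamma_n(s))\ge\eta>0$ for all $s$. Since $\gamma_n(s)\ll z$ eventually for every $z\gg\gamma(s)$, you get $\tau_g\ge\eta$ on $I^+(\gamma(s))$ for all $s$. Because $\gamma(s_{k+1})\le\gamma(s_k)\ll z_k$, you may pick $z_{k+1}\in I^+(\gamma(s_{k+1}))\cap I^-(z_k)$ arbitrarily close to $\gamma(s_{k+1})$ in a complete auxiliary Riemannian metric. Thread a past directed timelike curve through the $z_k$. It is past inextendible, because $\gamma$ leaves every compact set: $\tau_g$ is lower semicontinuous, positive, and tends to $0$ along $\gamma$. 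But $\tau_g\ge\eta$ along it, contradicting regularity. The same tail control is what makes limits of generators again generators, which your compactness of generator directions in the Lipschitz step also relies on.
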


\begin{proposition}[{\cite[Theorem~4.18]{sormani_null_2016}}]\label{prop:local-antiLipschitness}
    If a time function has timelike gradient defined almost everywhere and locally bounded away from the light cones, then it is locally anti-Lipschitz, and thus induces a definite null distance. In particular, every regular cosmological time is locally anti-Lipschitz.
\end{proposition}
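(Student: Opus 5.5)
The plan is to work in a small coordinate chart around a point $p$ and compare the null length of causal curves with a Euclidean length. In that chart, a uniform lower bound of the form $\tau(r)-\tau(q)\geq c\,|r-q|$ for causally related $q\leq r$ is exactly local anti-Lipschitzness, with $d_U = c\,|\cdot|$. Definiteness and the statement about the cosmological time then follow from this local estimate.

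\textbf{Step 1 (uniform local estimate).} Fix $p$ and a chart $\varphi\colon U\to\R^{n+1}$ with $\overline U$ compact, so that $\varphi(U)$ is convex. The hypothesis gives, after shrinking $U$, a constant $\theta>0$ such that at almost every $x\in U$ the gradient $\nabla\tau(x)$ is past directed timelike and stays at "angle" at least $\theta$ from the light cone. I would pick a Lorentzian metric $\hat g$ on $U$ with cones strictly wider than those of $g$ but still narrow enough that this angle bound persists. A compactness argument then yields $c>0$ with
\[
d\tau_x(X)\geq c\,|X|_{\mathrm{eucl}}
\]
for every future directed $\hat g$-causal $X$ and almost every $x$.

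\textbf{Step 2 (integration along curves).} Since $\tau$ is continuous and, in the cases of interest, locally Lipschitz, I would like to write $\tau(r)-\tau(q)=\int d\tau(\gamma')\geq c\,L_{\mathrm{eucl}}(\gamma)\geq c|r-q|$ for a $g$-causal curve $\gamma$ from $q$ to $r$ inside $U$. The obstacle, and the main difficulty of the proof, is that $d\tau$ exists only almost everywhere, and $\gamma$ might run entirely inside the exceptional null set.

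I would handle this by translation in coordinates. For small $v\in\R^{n+1}$ set $\gamma_v=\gamma+v$. By continuity of the cones, for $|v|$ small each $\gamma_v$ is future directed $\hat g$-causal and stays in $U$. By Fubini, for almost every $v$ the curve $\gamma_v$ meets the non-differentiability set of $\tau$ in a set of parameters of measure zero. For such $v$, the Lipschitz function $\tau\circ\gamma_v$ satisfies
\[
\tau(r+v)-\tau(q+v)\geq c\,|r-q|.
\]
Letting $v\to 0$ and using continuity of $\tau$ gives the estimate for $\gamma$ itself. A causal relation $q\leq r$ realized by a curve leaving $U$ is excluded by shrinking to a smaller neighborhood $V\Subset U$, using that the spacetime is strongly causal (e.g., globally hyperbolic in the cosmological case), so that causal curves between points of $V$ stay in $U$.

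\textbf{Step 3 (definiteness and the cosmological case).} Local anti-Lipschitzness implies definiteness of $\d$ as in \cite{sormani_null_2016}. Take a piecewise causal curve from $q$ to $r\neq q$, and shrink $U$ so that $r\notin U$. Either the curve leaves $U$, or it stays in $U$. In both cases I sum the estimate of Step 2 over its causal pieces (up to the first exit from $U$ in the first case), using the triangle inequality for the Euclidean distance. This gives $\hat L_\tau(\alpha)\geq c\min\{\mathrm{dist}(q,\partial U),\,|q-r|\}>0$.

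Finally, for a regular cosmological time, \Cref{prop:properties-regular-cosmological-time} provides exactly the hypotheses used above: $\tau_g$ is locally Lipschitz, with a.e.\ timelike gradient locally bounded away from the light cones, on a globally hyperbolic spacetime. Hence $\tau_g$ is locally anti-Lipschitz.
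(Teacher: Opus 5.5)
The paper gives no proof of this statement; it is quoted from \cite[Theorem~4.18]{sormani_null_2016}, so there is no internal argument to compare against. Judged on its own, your scheme (localize, integrate $d\tau$ along causal curves, use Fubini over translated curves to avoid the null set where $\nabla\tau$ is undefined, and use strong causality to keep curves in the chart) is sound. For a locally Lipschitz $\tau$ it is a complete proof. Strong causality is automatic once a time function exists, since the spacetime is then stably causal. In particular your argument covers the regular cosmological time, which is the only case the paper uses. As a proof of the first sentence as stated, however, two steps need repair.

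First, in Step 1 an angle bound alone does not give $d\tau_x(X)\geq c\,|X|$; it only gives $d\tau_x(X)\geq c\,|\nabla\tau(x)|\,|X|$. This is not cosmetic. On Minkowski space, $\tau=t^3$ is a locally Lipschitz time function whose gradient $-3t^2\partial_t$ exists everywhere and is exactly vertical off the null set $\{t=0\}$. Yet zigzag null curves show that its null distance vanishes between distinct points of $\{t=0\}$, so it is not locally anti-Lipschitz. The hypothesis must be read as follows: on $\bar U$, $\nabla\tau$ takes values a.e.\ in a fixed compact subset of the open past timelike cone bundle, which bounds its size from below as well as its direction. Your compactness argument needs exactly this. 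For a regular cosmological time it holds, since $g(\nabla\tau_g,\nabla\tau_g)=-1$ wherever $\tau_g$ is differentiable.

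Second, the first sentence of the proposition does not assume $\tau$ locally Lipschitz, and Step 2 genuinely uses it. Your translates $\gamma_v$ are only $\hat g$-causal, so $\tau$ need not be monotone along them. Without absolute continuity of $\tau\circ\gamma_v$, nothing lets you integrate $d\tau$; a Cantor-type singular part in $\tau$ is allowed by the hypotheses. The fix is to replace translates by $g$-timelike curves. Shrink $U$ so that causal curves in it are Euclidean-short and the cones vary little, and use
\[
\gamma_{v,\varepsilon}(s)=\gamma(s)+v+\varepsilon s\,\partial_0 \qquad \text{with } |v|\ll\varepsilon .
\]
These curves are future directed $g$-timelike, so $\tau\circ\gamma_{v,\varepsilon}$ is nondecreasing. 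Lebesgue's inequality $\int_a^b f'\leq f(b)-f(a)$ for nondecreasing $f$ then points in exactly the direction you need. Fubini in $v$, followed by $v\to0$ and then $\varepsilon\to0$, finishes as before, with Step 1 now applied to $g$-causal vectors only.

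A minor point: in Step 3, once you shrink $U$ so that $r\notin U$, every curve from $q$ to $r$ must leave $U$. The case distinction and the $|q-r|$ term are therefore unnecessary.
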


Coming back to the \textit{causality encoding} property, in most cases we will be in the following situation:

\begin{proposition}[{\cite[Theorem~1.9]{burtscher_global_2024}}] \label{prop:encoding_causality}
    If all nonempty level sets of a locally anti-Lipschitz time function are future (or past) Cauchy (a notion strictly weaker than a Cauchy surface), then the null distance encodes causality.
\end{proposition}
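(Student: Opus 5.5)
The direction ``$p\leq q\implies \d(p,q)=\tau(q)-\tau(p)$'' is immediate. Indeed, a future directed causal curve from $p$ to $q$ has null length exactly $\tau(q)-\tau(p)$, and the general bound $\d(p,q)\geq \abs{\tau(q)-\tau(p)}$ gives the reverse inequality. The substance lies in the converse. So assume $\d(p,q)=\tau(q)-\tau(p)$ and $p\neq q$; the goal is to produce a future directed causal curve from $p$ to $q$.

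\textbf{Step 1 (almost-monotone approximating curves).} Choose piecewise causal curves $\alpha_n$ from $p$ to $q$ with $\hat L_\tau(\alpha_n)\leq \tau(q)-\tau(p)+1/n$. Split the null length into the total increase $P_n$ and the total decrease $D_n$ of $\tau$ along the breakpoints. Then $P_n-D_n=\tau(q)-\tau(p)$ and $P_n+D_n\leq \tau(q)-\tau(p)+1/n$, so $D_n\leq 1/(2n)$. Thus the past directed pieces of $\alpha_n$ have total $\tau$-variation tending to zero. Moreover, $\tau$ stays within $[\tau(p)-1/n,\tau(q)+1/n]$ along $\alpha_n$.

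\textbf{Step 2 (local control via anti-Lipschitz).} On each neighborhood $U$ where $\tau$ is anti-Lipschitz with respect to $d_U$, every causal piece $\beta$ of $\alpha_n$ contained in $U$ satisfies $d_U(\beta(s),\beta(s'))\leq\abs{\tau(\beta(s'))-\tau(\beta(s))}$. Hence the $d_U$-length of $\alpha_n$ inside $U$ is bounded by its null length there. After reparametrizing $\alpha_n$ by cumulative null length, which is bounded by $\tau(q)-\tau(p)+1$, the curves are therefore locally uniformly equicontinuous. The past directed portions moreover have $d_U$-length tending to $0$.

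\textbf{Step 3 (no escape, using the Cauchy property).} This is where I expect the main difficulty, since nothing a priori prevents $\alpha_n$ from leaving every compact set. I would use that the level sets $S_{\tau(p)}$ and $S_{\tau(q)}$ are future (or past) Cauchy. First, Step 1 shows that $\alpha_n$ is, up to past pieces of vanishing $\tau$-variation, a concatenation of future directed causal segments. I would replace these small past excursions by short jumps inside anti-Lipschitz neighborhoods, so as to show that $q\in\overline{J^+(p)}$. Then I would argue, using that every inextendible causal curve meets the relevant level set, that the future directed causal curves from points $p_n\to p$ to $q$ cannot escape to infinity between the two levels. Such an escape would produce an inextendible causal curve confined to the slab $\tau^{-1}([\tau(p),\tau(q)])$, contradicting the (future) Cauchy property of $S_{\tau(q)}$, or of $S_{\tau(p)}$ in the past Cauchy case. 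I would try this limit-curve version first. If it becomes awkward, I would fall back on proving directly that the causal relation is closed for points on levels that are Cauchy in this weaker sense.

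\textbf{Step 4 (limit curve).} With the curves confined to a compact region, Arzel\`a--Ascoli applied to the reparametrized $\alpha_n$ yields a limit curve $\gamma$ from $p$ to $q$. Since the past directed contributions vanish in $d_U$-length, $\gamma$ is locally a uniform limit of future directed causal curves. By the standard limit curve theorem in the strongly causal (indeed globally hyperbolic near the slab) region, $\gamma$ is future directed causal. Hence $p\leq q$, as required.
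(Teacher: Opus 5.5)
The paper gives no proof of this statement (it is imported from Burtscher--Garc\'{\i}a-Heveling), so your argument has to stand on its own. It breaks down in Step~3.

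Everything rests on the claim that, by replacing the small past excursions of $\alpha_n$ with ``short jumps'' in anti-Lipschitz neighbourhoods, you obtain $q\in\overline{J^+(p)}$ \emph{before} using the Cauchy hypothesis. This does not work, for two reasons:
\begin{itemize}
\item Jumps are not causal, so they produce no points of $J^+(p)$.
\item Absorbing a past piece of $\tau$-size $\delta$ into a future directed causal curve needs cone estimates that are uniform along the whole curve. Such estimates exist only on a compact set, which is exactly the confinement you have not established.
\end{itemize}

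In fact the claim is false for locally anti-Lipschitz time functions in general. Work in the Minkowski plane with coordinates $(t,x)$ and $\tau=t$. Delete the line $\{t=1/2\}$ except for small open gaps of half-width $w_k$ around $x=1/k$. Also delete closed horizontal lids at heights $1/2+h_k$ covering $[1/k-w_k-\delta_k,\,1/k+w_k+\delta_k]$, where $h_k<\delta_k$, all are small and tend to $0$, and the lids have disjoint $x$-ranges.
\begin{itemize}
\item The deleted set is closed, since everything accumulates only at the deleted point $(1/2,0)$.
\item Since $t$ is temporal, it is locally anti-Lipschitz.
\item A future directed causal curve through the $k$-th gap moves horizontally at most as much as it rises. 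So it reaches the lid height while still under the lid, and cannot get around it. Hence $q=(1,0)\notin\overline{J^+(p)}$ for $p=(0,0)$.
\item Yet zigzagging under the $k$-th lid costs null length $O(w_k+\delta_k)$, so $\hat{d}(p,q)=1=\tau(q)-\tau(p)$.
\end{itemize}
The near-optimal curves leave every compact set by running into the deleted point. Ruling this out is exactly the job of the one-sided Cauchy hypothesis, which some level sets violate here.

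What is missing, then, is an argument that applies the Cauchy hypothesis to the piecewise causal curves $\alpha_n$ themselves. You need to show that, for $n$ large, they stay in a fixed compact subset of the spacetime, where uniform local estimates control the past pieces. Once that is available, your Steps~2 and~4 can be carried out essentially as in the proper case treated by Sakovich--Sormani, and the detour through $\overline{J^+(p)}$ becomes unnecessary. Invoking the Cauchy property only afterwards, for genuine causal curves, merely upgrades $\overline{J^+(p)}$ to $J^+(p)$ and cannot compensate for the missing confinement.

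Your final step also uses that every inextendible causal curve meets the relevant level set. That is the full Cauchy property, which is not assumed. With one-sided Cauchy level sets you can exclude only one branch of the limit-curve dichotomy, and you must match it to the hypothesis:
\begin{itemize}
\item A future-inextendible curve issuing from $p$ and trapped in $\{\tau\leq\tau(q)\}$ is excluded by a condition on $S_{\tau(q)}$ concerning curves coming from its past.
\item A past-inextendible curve ending at $q$ and trapped in $\{\tau\geq\tau(p)\}$ is excluded by a condition on $S_{\tau(p)}$ concerning curves coming from its future.
\end{itemize}
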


Notice that, in particular, \Cref{prop:properties-regular-cosmological-time} ensures that this is the case for a regular cosmological time. For cosmological spacetimes (see \Cref{def:generalized_product}), the cosmological time is given by a translation of $t$: $\tau_g=t-t_1$. For convenience, we will usually take $t_1=0$ after a translation of $t$, and therefore $\tau_g=t$. We will thus denote $t_2=\tau_{\max}$. When the slices are compact, $\tau_g$ is proper and \Cref{lemma:regular_cosmotime_when_compact_slices} gives regularity, so that we can even apply \cite[Theorem~4.1]{sakovich_null_2023} to obtain causality encoding.

In the foregoing, for a given spacetime $(N,g)$ with regular cosmological time $\tau_g$, the null distance will be assumed to be taken with respect to the cosmological time. We will therefore denote it $\d_g$ or, simply, $\d$. Notice that, following previous discussions, such a null distance will automatically be definite and encode causality.

\begin{definition}[{\cite[Def.~1.1]{sakovich_introducing_2024}}]\label{def:null_compactifiable}
    A spacetime $(N,g)$ is \textit{causally-null compactifiable} if (1) it has a bounded regular cosmological time function $\tau_g$,
    and (2) the metric completion $(\bar{N},\d_g)$ of $(N,\d_g)$ is compact. We call \textit{associated timed-metric space} the triple $(\bar{N},\d_g,\tau_g)$, where $\d_g$ and $\tau_g$ are the (unique) extensions of $\d_g$ and $\tau_g$, respectively, to the metric completion; we use the same notation for simplicity.
\end{definition}

\begin{definition}[{\cite[Def.~3.9]{sakovich_introducing_2024}}]\label{def:future-developed}
    A causally-null compactifiable spacetime $(N,g)$ is \textit{future developed} if its associated timed metric space $(\bar{N},\d_g, \tau_g)$ has extended cosmological time function $\tau_g\colon \bar{N}\to [0,\tau_{\max}]$ with \textit{initial} level set $M\coloneqq \tau_g^{-1} (0)$ satisfying the \textit{distance from initial data property}, i.e., 
    \[
    \tau_g(p)=\min\bigl\{
    \d_g(p,q)\mid q\in M
    \bigr\},\qquad \forall p\in \bar{N}.
    \]
\end{definition}

Spacetimes with regular cosmological time fall into the more general category of {\em timed metric spaces}: 
\begin{definition}[{\cite[Def. 2.5]{che-perales-sormani}}]
    A {\em timed metric space} is a triple \((X,d,\tau)\) where \((X,d)\) is a metric space and \(\tau \colon X \to [0,+\infty)\) is a \(1\)-Lipschitz function, called {\em time function}.
\end{definition}

In this setting we can also extend the notion of future developments to {\em future developed} timed metric spaces:
\begin{definition}[{\cite[Def. 2.6]{perales_timed_hausdoff_2025}}]\label{def:timed_metric_space_future_developed}
    A timed metric space is {\em future developed} if by setting \(A \coloneqq \tau^{-1}(0)\) we have that
    \[
        \abs{A}\geq 2 \qquad \text{and} \qquad \tau(p) = \operatorname{dist}(p,A).
    \]
\end{definition}

Of course, the associated timed metric space of a future developed spacetime in the sense of \Cref{def:future-developed} is future developed in the sense of \Cref{def:timed_metric_space_future_developed}.

\subsection{Monotone convergence and convergence of metric pairs}
In this section we discuss some notions of convergence of metric spaces which will be used in the sequel. 

First, we introduce some results from \cite{perales_monotone_2025}. The first of them deals with embedding two metric spaces with the same underlying set into a common metric space. 
\begin{proposition}[{\cite[Proposition~3.7]{perales_monotone_2025}}]\label{prop:embedding_taxi}
    Let $(X,d_a)$ and $(X,d_b)$ be two metric spaces with the same underlying set and let $\varepsilon>0$ be such that
    \[
    d_b(x,y)-\varepsilon\leq d_a(x,y)\leq d_b(x,y),\qquad \forall x,y\in X.
    \]
    Then there is a metric space $(Z=X\times[0,\varepsilon/2],d_Z)$, where the distance between points $z_i=(x_i,t_i)$ is given by 
    \[
    d_Z(z_1,z_2)\coloneqq\min\bigl\{d_b(x_1,x_2)+\abs{t_1-t_2},\: \varepsilon-t_1-t_2+d_a(x_1,x_2)\bigr\},
    \]
    and the maps 
    \[
        \begin{aligned}
            f_a&\colon (X,d_a)\to (Z,d_Z), & \quad f_a(x)&\coloneqq(x,\varepsilon/2), \\
            f_b&\colon (X,d_b)\to (Z,d_Z), &  \quad f_b(x)&\coloneqq(x,0),
        \end{aligned}
    \]
    are distance preserving. In addition, the Gromov-Hausdorff distance between $(X,d_a)$ and $(X,d_b)$ is not greater than $\varepsilon/2$.
\end{proposition}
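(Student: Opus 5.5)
The statement to prove is \Cref{prop:embedding_taxi}. I will first check that $d_Z$ is a genuine metric on $Z=X\times[0,\varepsilon/2]$, then check that $f_a,f_b$ are isometric embeddings, and finally read off the Gromov-Hausdorff bound.

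\emph{Metric axioms.} Symmetry is clear from the formula. For definiteness, suppose $d_Z(z_1,z_2)=0$. If the first entry of the minimum vanishes, then $x_1=x_2$ and $t_1=t_2$. If instead the second entry vanishes, note that $\varepsilon-t_1-t_2\ge 0$ because $t_i\le\varepsilon/2$. Hence $t_1=t_2=\varepsilon/2$ and $d_a(x_1,x_2)=0$, which again gives $z_1=z_2$.

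\emph{Triangle inequality.} I would do a case analysis on which entry realizes the minimum in $d_Z(z_1,z_2)$ and in $d_Z(z_2,z_3)$.
\begin{itemize}
\item \emph{First with first.} The triangle inequalities for $d_b$ and for $\abs{\cdot}$ give the first entry for $(z_1,z_3)$ directly.
\item \emph{Second with second.} The sum is $2\varepsilon-t_1-2t_2-t_3+d_a(x_1,x_2)+d_a(x_2,x_3)$. Since $\varepsilon-2t_2\ge0$, this is at least $\varepsilon-t_1-t_3+d_a(x_1,x_3)$.
\item \emph{Mixed case}, say first then second. The sum is
\[
d_b(x_1,x_2)+\abs{t_1-t_2}+\varepsilon-t_2-t_3+d_a(x_2,x_3).
\]
Use $d_b\ge d_a$ to replace $d_b(x_1,x_2)$ by $d_a(x_1,x_2)$, so the spatial terms are at least $d_a(x_1,x_3)$. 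Then use $\abs{t_1-t_2}-t_2\ge -t_1$ to obtain the second entry $\varepsilon-t_1-t_3+d_a(x_1,x_3)$.
\end{itemize}
These are the only cases, the remaining mixed one being symmetric.

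\emph{Isometric embeddings.} For $f_b$ we have
\[
d_Z\bigl((x,0),(y,0)\bigr)=\min\{d_b(x,y),\,\varepsilon+d_a(x,y)\}.
\]
The hypothesis $d_b\le d_a+\varepsilon$ makes this equal to $d_b(x,y)$. For $f_a$ we have
\[
d_Z\bigl((x,\varepsilon/2),(y,\varepsilon/2)\bigr)=\min\{d_b(x,y),\,d_a(x,y)\}=d_a(x,y),
\]
using $d_a\le d_b$.

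\emph{Gromov-Hausdorff bound.} For every $x$ we have $d_Z\bigl((x,0),(x,\varepsilon/2)\bigr)\le\varepsilon/2$, from the first entry of the minimum. Hence each image lies in the closed $\varepsilon/2$-neighborhood of the other. The Hausdorff distance in $Z$ between $f_a(X)$ and $f_b(X)$ is therefore at most $\varepsilon/2$, and so is $d_{GH}$.

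The only delicate step is the mixed case of the triangle inequality. This is exactly where both hypotheses on $d_a,d_b$ and the restriction $t\le\varepsilon/2$ are used.
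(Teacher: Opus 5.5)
Your proof is correct and complete. The triangle-inequality case analysis covers all combinations. Evaluating $d_Z$ on the slices $t=0$ and $t=\varepsilon/2$ gives the two distance-preserving maps, and the vertical pairs $(x,0),(x,\varepsilon/2)$ give the Hausdorff bound $\varepsilon/2$. The paper does not prove this statement itself; it imports it from the cited reference. Your direct verification is the natural argument for it.

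One correction to your closing remark about where the hypotheses enter:

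\begin{itemize}
\item The mixed case uses only $d_a\le d_b$ together with the elementary inequality $\abs{t_1-t_2}\ge t_2-t_1$.
\item The restriction $t\le\varepsilon/2$ is used in the second--second case and for nonnegativity and definiteness.
\item The lower bound $d_b-\varepsilon\le d_a$ is used only to show that $f_b$ is distance preserving.
\end{itemize}

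So $d_Z$ is already a metric under $d_a\le d_b$ alone. This is a misattribution in your commentary, not a gap in the proof.
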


This result is instrumental to prove Gromov-Hausdorff convergence in the following theorem, which will be needed in the proof of \Cref{thm:uniform_convergence}.
\begin{theorem}[{\cite[Theorem~3.2]{perales_monotone_2025}}]
    Let $(X,d_j)$ be a sequence of metric spaces whose diameters are uniformly bounded above and such that the sequences $d_j$ are monotonically increasing. Then, the sequence $d_j$ converges pointwise to a distance $d_\infty$. If, moreover, $(X,d_\infty)$ is compact, we have uniform convergence of the distances and Gromov-Hausdorff convergence of the metric spaces.
\end{theorem}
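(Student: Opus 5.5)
The plan is to establish the three conclusions in order: first the pointwise limit, then uniform convergence by a Dini-type argument on the compact space $(X\times X,\, d_\infty\times d_\infty)$, and finally Gromov-Hausdorff convergence from \Cref{prop:embedding_taxi}.

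\emph{Pointwise limit.} Fix $x,y\in X$. The sequence $d_j(x,y)$ is nondecreasing. It is bounded above by the uniform diameter bound $D$. Hence it converges to some $d_\infty(x,y)\in[0,D]$. Symmetry and the triangle inequality pass to the limit directly. Definiteness follows from $d_\infty\geq d_1$: if $d_\infty(x,y)=0$, then $d_1(x,y)=0$, so $x=y$. Thus $d_\infty$ is a distance, and $d_j\leq d_\infty$ for every $j$.

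\emph{Uniform convergence when $(X,d_\infty)$ is compact.} Set $f_j(x,y)\coloneqq d_\infty(x,y)-d_j(x,y)\geq 0$. By monotonicity, $f_j$ decreases pointwise to $0$. The key observation is that each $d_j$ is continuous with respect to $d_\infty$. Indeed, by the triangle inequality for $d_j$ and the bound $d_j\leq d_\infty$,
\[
\abs{d_j(x,y)-d_j(x',y')}\leq d_j(x,x')+d_j(y,y')\leq d_\infty(x,x')+d_\infty(y,y').
\]
So $d_j$, and hence $f_j$, is $1$-Lipschitz in each variable on $(X\times X,\, d_\infty\times d_\infty)$. This product space is compact. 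Dini's theorem (a monotone sequence of continuous functions converging pointwise to a continuous limit on a compact space converges uniformly) then gives
\[
\varepsilon_j\coloneqq\sup_{x,y\in X} f_j(x,y)\longrightarrow 0.
\]
I expect this step to be the main obstacle. The difficulty is not Dini itself but ensuring that the $d_j$ are continuous in the topology where compactness is available. Here we only know compactness for $d_\infty$, and the inequality $d_j\leq d_\infty$ is exactly what makes this work. If $d_j\leq d_\infty$ were unavailable, I would instead try an $\varepsilon$-net argument in $d_\infty$: use finitely many points, where the convergence is uniform, and control the remaining pairs through the same Lipschitz estimate.

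\emph{Gromov-Hausdorff convergence.} For each $j$ we have
\[
d_\infty(x,y)-\varepsilon_j\leq d_j(x,y)\leq d_\infty(x,y)\qquad \forall x,y\in X.
\]
This is exactly the hypothesis of \Cref{prop:embedding_taxi} with $d_a=d_j$, $d_b=d_\infty$ and $\varepsilon=\varepsilon_j$ (if $\varepsilon_j=0$ the spaces are isometric and there is nothing to prove). The proposition yields
\[
d_{GH}\bigl((X,d_j),(X,d_\infty)\bigr)\leq \varepsilon_j/2\to 0,
\]
which proves the Gromov-Hausdorff convergence.
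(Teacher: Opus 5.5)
Your argument is correct and matches the paper's indicated route: the paper does not reprove this cited result, but it names \Cref{prop:embedding_taxi} as the tool for the Gromov--Hausdorff step, and your Dini argument on $(X\times X, d_\infty\times d_\infty)$, which works because $d_j\le d_\infty$, supplies the uniform bound $\varepsilon_j\to 0$ that this proposition needs. Two harmless slips: $f_j=d_\infty-d_j$ is only guaranteed to be $2$-Lipschitz (not $1$-Lipschitz) in each variable, which does not matter since only continuity is used; and if Gromov--Hausdorff distance is taken between compact spaces, note that each $(X,d_j)$ is compact because $\mathrm{id}\colon (X,d_\infty)\to(X,d_j)$ is $1$-Lipschitz.
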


On another note, we introduce the notion of metric pairs.
\begin{definition}[Metric pair]
    A metric pair is a triple \((X,d,A)\), where \((X,d)\) is a metric space and \(A \subset X\) is a closed subspace endowed with the induced metric by \(X\).
\end{definition}
Next we give the definition of convergence of metric pairs presented in the work of Che, Galaz-García, Guijarro and Membrillo Solis \cite{Che2024}:
\begin{definition}
    Consider a sequence of metric pairs \( \{(X_j, d_j, A_j)\}_{j \in \N}\). We say that the sequence {\em converges in the metric pair sense to} a metric pair \((X_\infty, d_\infty, A_\infty)\) if
    \[
        \lim_{j \to +\infty} \quad 
        \inf \left\{\tilde{d}^H(\phi_j(X_j), \psi_j(X_\infty)) + \tilde{d}^H(\phi_j(A_j), \psi_j(A_\infty)) \right\} = 0,
    \]
    where the infimum is taken over all metric spaces \((Z,\tilde{d})\) and all isometric embeddings \(\phi_j \colon X_j \to Z,\) \(\psi_j \colon X_\infty \to Z\), and \(\tilde{d}^H\) is the Hausdorff distance between subsets of the space \((Z, \tilde{d})\).
\end{definition}
\begin{remark}
    Notice that convergence of metric pairs implies Gromov-Hausdorff convergence of the sequences \((X_j,d_j)\) and \((A_j, {d_j}_{\vert A})\) to the spaces \((X_\infty, d_\infty)\) and \((A_\infty, {d_\infty}_{\vert A_\infty})\), respectively.
\end{remark}

We recall the notion of {\em future developed convergence} given in \cite[Def. 4.10]{sakovich_introducing_2024} for smooth spaces and then extended to timed metric spaces in \cite[Def. 2.8]{perales_timed_hausdoff_2025}, which can be stated in terms of convergence of metric pairs:
\begin{definition}[{\cite[Def. 4.10]{sakovich_introducing_2024}}]\label{def:fd_gh_convergence}
    A sequence of future developed timed metric spaces \((X_j, d_j, \tau_j)\) converges in the {\em future developed sense} to a future developed timed metric space \((X_\infty, d_\infty, \tau_\infty)\) if the sequence of metric pairs \((X_j, d_j, A_j\coloneqq \tau_j^{-1}(0))\) converges in the metric pair sense to the metric pair \((X_\infty, d_\infty, A_\infty \coloneqq \tau_\infty^{-1}(0))\).
\end{definition}

We finally recall that Perales showed that
future developed convergence implies timed-Hausdorff convergence \cite[Theorem 1.5]{perales_timed_hausdoff_2025}.

\section{Null compactifiable cosmological spacetimes}

In this section we show that cosmological spacetimes with compact slices satisfying assumption~\eqref{eq:A} are causally-null compactifiable. More precisely, we show that the metric completion of their associated timed-metric spaces are bi-Lipschitz equivalent to the topological product \([0,\tau_{\max}] \times M\). 

\subsection{Regularity of the cosmological time function}

The first step is to ensure that the cosmological time of cosmological spacetimes with compact slices is regular. Without the compactness assumption the answer is no, even if the slices $(M_t, h|_{M_t})$ are required to be complete: the example featured in the work by Sánchez \cite[Ex.~6.1]{sanchez_globally_2022} shows that this implication does not even hold in the warped product case, for which we know that the null distance is always definite and additionally encodes causality if the slices are complete (cf.~\cite[Lemmata~3.22,3.24]{sormani_null_2016}). The converse implication, namely that the cosmological time being regular implies that the slices are complete is, surprisingly, also not true (see~\cite[Ex.~6.4]{sanchez_globally_2022}). However, when the slices are compact, the cosmological time is regular, as the following lemma shows:

\begin{lemma}\label{lemma:regular_cosmotime_when_compact_slices}
    Let $N=I\times M$ be a cosmological spacetime with compact slices. Then the cosmological time, which is a translation of the coordinate $t$, is regular.
\end{lemma}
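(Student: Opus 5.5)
We take $I=(0,\tau_{\max})$ after a translation, so the claim is that $\tau_g=t$, that it is finite, and that $\tau_g\to 0$ along every past directed inextendible causal curve. We prove this in two steps: first the identity $\tau_g(q)=t(q)$, then the regularity properties, which follow almost for free from it.

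\emph{Step 1: the bound $\ell(p,q)\le t(q)-t(p)$.} Let $\gamma=(t(s),x(s))$ be a future directed causal curve from $p$ to $q$. By \Cref{item:radical} the metric splits orthogonally, so
\[
g(\dot\gamma,\dot\gamma)=-\dot t^2+h(\dot x,\dot x).
\]
Causality forces $\dot t\ge \sqrt{h(\dot x,\dot x)}\ge 0$, and $\dot t>0$ wherever $\dot\gamma\neq 0$. Hence $L(\gamma)=\int\sqrt{\dot t^2-h(\dot x,\dot x)}\,ds\le \int \dot t\,ds=t(q)-t(p)$. Taking the supremum over $\gamma$ gives $\ell(p,q)\le t(q)-t(p)<t(q)$. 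Consequently
\[
\tau_g(q)=\sup_{p\le q}\ell(p,q)\le t(q)<\infty,
\]
so the cosmological time is finite.

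\emph{Step 2: the reverse inequality.} Write $q=(t_0,x_0)$ and, for small $\varepsilon>0$, consider the vertical curve $s\mapsto(s,x_0)$ with $s\in[\varepsilon,t_0]$. Its tangent is $\partial_t$, with $g(\partial_t,\partial_t)=-1$, so it is future directed timelike for the orientation given by $\partial_t$. Its Lorentzian length is $t_0-\varepsilon$. Hence $\ell((\varepsilon,x_0),q)\ge t_0-\varepsilon$, and letting $\varepsilon\to0$ yields $\tau_g(q)\ge t_0$. Together with Step 1, $\tau_g=t$.

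\emph{Step 3: limit along past inextendible causal curves.} Let $\gamma$ be a past directed inextendible causal curve. Since $t$ is strictly monotone along causal curves, $t\circ\gamma$ decreases to some limit $c\ge 0$. Suppose $c>0$. We use the estimate from Step 1, $\sqrt{h(\dot x,\dot x)}\le|\dot t|$, and compare $h_t$ with a fixed metric $h_{t_*}$ on $M$. On the compact set $[c,t(\gamma(0))]\times M$, continuity of $h$ gives uniform bi-Lipschitz bounds between $h_t$ and $h_{t_*}$. The $h_{t_*}$-length of the projected curve $x(s)$ is therefore bounded by a constant times $t(\gamma(0))-c<\infty$. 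By completeness of the compact manifold $(M,h_{t_*})$, $x(s)$ converges to some $x_\infty$. Then $\gamma$ converges to $(c,x_\infty)\in N$, so $\gamma$ is past extendible, a contradiction. Hence $c=0$, i.e.\ $\tau_g\to0$.

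The only delicate point is Step 3. Parametrizing $\gamma$ by $t$ makes the length argument clean, and it is the one place where compactness of $M$ is used, to obtain uniform metric comparison on the slab $[c,t(\gamma(0))]\times M$. Curves that are only locally Lipschitz are handled the same way, working almost everywhere.
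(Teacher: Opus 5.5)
Your proof is correct, but it follows a different route from the paper's.

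\textbf{The paper's route.} The paper only asserts that $\tau_g=t-t_1$ (``easily shown''). For regularity it argues causally: since $t$ is a time function, the spacetime is strongly causal. If $t\circ\gamma$ stayed above some $t_1'>t_1$, the past end of $\gamma$ would be imprisoned in the compact slab $[t_1',t^*]\times M$. This contradicts the non-imprisonment property of strongly causal spacetimes.

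\textbf{Your route.} First, you actually verify $\tau_g=t$, via the orthogonal-splitting estimate $L(\gamma)\le t(q)-t(p)$ together with the vertical curves. This fills in the step the paper omits. Second, you replace the appeal to non-imprisonment with a direct quantitative argument. On the slab $[c,t(\gamma(0))]\times M$ the $h_t$ are uniformly comparable to $h_{t_*}$, so the spatial projection has $h_{t_*}$-length at most a constant times $t(\gamma(0))-c$. It therefore converges by completeness, which gives a past endpoint $(c,x_\infty)\in N$. In effect you prove non-imprisonment by hand for this class of metrics.

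\textbf{What each buys.} The paper's version is shorter and purely causal-theoretic: it needs only compactness of the slab and a standard theorem. Yours is self-contained and isolates exactly where compactness of $M$ enters, namely uniform comparability of the $h_t$ on compact time intervals and completeness of a reference metric. Consequently your argument goes through unchanged for non-compact $M$ whenever, on each compact time interval, the $h_t$ are bounded below by a fixed complete Riemannian metric.
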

\begin{proof}
    The spacetime has $t$ as a time function. Therefore, it is strongly causal (see \cite{minguzzi_lorentzian_2019}). The cosmological time is easily shown to be $\tau_g(t,x)=t-t_1$, where $t_1=\inf I$, so in particular it is finite. So consider a future directed, past-inextendible causal curve $\gamma\colon (s_1,s_2)\to N$ given by $\gamma=(\alpha,\beta)$ with $\alpha$ strictly increasing. Suppose, by contradiction, that $t_1'\coloneqq\inf\{\alpha(s)\mid s\in (s_1,s_2)\}>t_1$. Take some $t^* \in (t'_1,t_2)$. Then $\gamma$ is imprisoned (towards $s_1$) in the compact set $K=[t_1',t^*]\times M$, i.e., there exists some $\delta>0$ such that $\gamma((s_1,s_1+\delta])\subset K$. But having a curve which is inextendible at $s_1$ and imprisoned (towards $s_1$) in a compact set contradicts strong causality. Therefore $t_1'=t_1$. From the expression of the cosmological time one deduces that $\tau_g(\gamma(s))\to 0$ as $s\to s_1^+$.
\end{proof}

\subsection{Compactifiable cosmological spacetimes}\label{sec:compactifiable_cosmo_sptimes}

Without loss of generality, we will always assume that $t_1=0$, i.e., that \(I =(0,\tau_{\max})\) for some value \(\tau_{\max} >0\). Thanks to the previous lemma, applying \Cref{prop:properties-regular-cosmological-time} we can deduce that $\tau_g(t,x)=t$ induces a definite null distance which encodes causality.

We next show that cosmological spacetimes $N=(0,\tau_{\max}) \times M$ with compact slices and satisfying assumption~\eqref{eq:A} are causally-null compactifiable. This reflects the fact that these spaces are thought of as future-developments of an initial data set. We will endow $N$ with the \say{taxi-distance}, defined by
\begin{equation} \label{eq:taxi_distance_def}
    d((t,x),(s,y)) \coloneqq \abs{t-s} +d_{t_0}(x,y),
\end{equation}
where for any choice \(t_0 \in (0,\tau_{\max})\), \(d_{t_0}\) is the distance induced by the Riemannian metric \(h_{t_0}\) on the slice \(\{t_0\} \times M\). 
To do so, we first study the relationship between each $d_t$. 

\subsubsection{Auxiliary results} 

We first state and prove a general lemma for compact metric spaces about the continuity of a function of the form \(x \mapsto \displaystyle\max_{y \in Y} f(x,y)\):

\begin{lemma}\label{lemma:topological_lemma}
    Consider two compact metric spaces \((X,d_X)\) and \((Y,d_Y)\) and their metric product \((X \times Y , d_X + d_Y)\). Let \(f\colon X\times Y \to \R\) be a continuous function and define the function \(\tilde{f} \colon X \to \R\) by
    \[
        \tilde{f}(x) = \min_{y \in Y} f(x,y) \qquad (\text{respectively}, \ \tilde{f}(x) = \max_{y \in Y} f(x,y)).
    \]
    Then \(\tilde{f}\) is continuous.
\end{lemma}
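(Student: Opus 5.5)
The plan is to use uniform continuity of $f$ on the compact space $X\times Y$. The \emph{max} case reduces to the \emph{min} case by applying the latter to $-f$, since $\max_{y} f(x,y) = -\min_y (-f)(x,y)$. So we only treat $\tilde f(x)=\min_{y\in Y} f(x,y)$. First we note that $\tilde f$ is well defined, i.e.\ the minimum is attained: for fixed $x$ the map $y\mapsto f(x,y)$ is continuous on the compact space $Y$.

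Since $(X\times Y, d_X+d_Y)$ is a compact metric space (the product of compact spaces, and $d_X+d_Y$ induces the product topology), the Heine--Cantor theorem makes $f$ uniformly continuous. Fix $\varepsilon>0$ and choose $\delta>0$ such that $d_X(x,x')+d_Y(y,y')<\delta$ implies $\abs{f(x,y)-f(x',y')}<\varepsilon$. In particular, whenever $d_X(x,x')<\delta$ we have $\abs{f(x,y)-f(x',y)}<\varepsilon$ for every $y\in Y$, uniformly in $y$.

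Now take $x,x'$ with $d_X(x,x')<\delta$, and pick $y'\in Y$ attaining the minimum for $x'$. Then
\[
\tilde f(x)\le f(x,y') < f(x',y')+\varepsilon = \tilde f(x')+\varepsilon .
\]
Exchanging the roles of $x$ and $x'$ gives $\tilde f(x')<\tilde f(x)+\varepsilon$. Hence $\abs{\tilde f(x)-\tilde f(x')}<\varepsilon$, so $\tilde f$ is in fact uniformly continuous.

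There is no real obstacle. The only point needing care is that the estimate must be uniform in $y$, which is exactly what uniform continuity on the compact product provides. Compactness of $X$ is used only to get this uniformity; compactness of $Y$ is what gives attainment of the extremum.
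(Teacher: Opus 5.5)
Your proof is correct and follows the paper's argument essentially verbatim: you use uniform continuity of $f$ on the compact product, compare $\tilde f(x)$ and $\tilde f(x')$ through minimizers of the other point, and obtain the $\varepsilon$-bound in both directions. The only minor differences are that you reduce the max case to the min case via $-f$, where the paper just says it is analogous, and that you explicitly note that $\tilde f$ is in fact uniformly continuous.
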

\begin{proof}
    We only prove the minimum case, the maximum one being completely analogous. Firstly notice that, as \(Y\) is compact, \(\tilde{f}\) is well defined. To prove continuity, we fix any \(x \in X\) and any \(\varepsilon>0\) and we look for a sufficiently small \(\delta>0\) such that \(\tilde{f}(B_\delta(x)) \subset B_\varepsilon(\tilde{f}(x))\). As \(f\) is continuous and \(X \times Y\) is compact, it is uniformly continuous as well. Therefore, there exists a \(\delta>0\) such that for any pair of points \((a,b),(c,d) \in X \times Y\) it holds that
    \[
        d_X(a,c) +d_Y(b,d) < \delta \implies \abs{f(a,b)-f(c,d)} < \varepsilon.
    \]
    We claim that \(\delta\) is the desired value. Indeed, for any point \(x' \in X\) with \(d_X(x,x')< \delta\), we can consider a point \(y' \in Y\) such that \(f(x',y') = \tilde{f}(x')\). We then get
    \[
       d_X(x,x') + d_Y(y',y') = d_X(x,x')<\delta \implies \abs{f(x,y') -f(x',y')} < \varepsilon, 
    \]
    which in particular implies
    \[
        \tilde{f}(x') + \varepsilon = f(x',y') + \varepsilon > f(x,y') \geq \tilde{f}(x)
    \]
    If we now consider \(y \in Y\) such that \(f(x,y) = \tilde{f}(x)\), we can repeat the same reasoning using \((x,y)\) and \((x',y)\), obtaining
    \[
        \tilde{f}(x) + \varepsilon = f(x,y) + \varepsilon > f(x',y) \geq \tilde{f}(x').
    \]
    Combining these inequalities we get \(\bigl\vert\tilde{f}(x) - \tilde{f}(x')\bigr\vert < \varepsilon\), which concludes the proof.
\end{proof}
    
It is well known that any two Riemannian metrics on a compact manifold \(M\) are Lipschitz equivalent. In the following results, we employ the previous lemma to show that for cosmological spacetimes $N=I\times M$ with compact slices the Lipschitz constants relating the Riemannian metrics $h_t$ on $M$ can be chosen to depend continuously on $t$.
\begin{lemma}\label{lemma:riemannian_metrics_are_lipschitz_equivalent}
    Consider a cosmological spacetime \((N,g)\) with compact slices. For any \(t_0 \in (0,\tau_{\max})\) there exist continuous functions \(\overline{C},\underline{C} \colon (0,\tau_{\max}) \to (0,\infty)\) such that for any point \((p,v) \in TM\) it holds that
    \[
        \underline{C}(t)\sqrt{h_{t_0}(v,v)_p} \leq \sqrt{h_{t}(v,v)_p} \leq \overline{C}(t)\sqrt{h_{t_0}(v,v)_p}
    \]
    and
    \[
        \lim_{t\to t_0} \underline{C}(t)= \lim_{t\to t_0} \overline{C}(t) = 1.
    \]
    If assumption \eqref{eq:A} holds, the same result is true for any $t_0\in[0,\tau_{\max}]$ and one obtains continuous functions \(\overline{C},\underline{C}\) defined on \( [0,\tau_{\max}]\).
\end{lemma}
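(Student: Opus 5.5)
The plan is to define the two constants as extremal values of the Rayleigh quotient over the unit sphere bundle of $h_{t_0}$ and to get their continuity from \Cref{lemma:topological_lemma}. Set
\[
S \coloneqq \{(p,v)\in TM \mid h_{t_0}(v,v)_p = 1\}.
\]
Since $M$ is compact, $S$ is a compact subset of $TM$; we give it a metric inducing its topology, for instance the restriction of a distance on $TM$ coming from any auxiliary Riemannian metric. Define
\[
F(t,(p,v)) \coloneqq \sqrt{h_t(v,v)_p}
\]
on $(0,\tau_{\max})\times S$. Because $h$ is a smooth tensor on $N$, its restriction to vectors tangent to the slices depends continuously on $(t,p,v)$, so $F$ is continuous. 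Since each $h_t$ is Riemannian (\Cref{item:radical}), $F>0$. We then set
\[
\underline{C}(t) \coloneqq \min_{S} F(t,\cdot), \qquad \overline{C}(t) \coloneqq \max_{S} F(t,\cdot).
\]

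For continuity, fix a compact interval $[a,b]\subset(0,\tau_{\max})$ and apply \Cref{lemma:topological_lemma} with $X=[a,b]$, $Y=S$ and $f=F$. This shows that $\underline{C}$ and $\overline{C}$ are continuous on $[a,b]$, and since the interval is arbitrary, on all of $(0,\tau_{\max})$. Positivity follows because a positive continuous function attains a positive minimum on the compact set $S$.

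The inequality then follows by homogeneity. For $v\neq 0$, apply the bounds to $v/\sqrt{h_{t_0}(v,v)_p}\in S_p$ and multiply back; the case $v=0$ is trivial. At $t=t_0$ we have $F(t_0,\cdot)\equiv 1$ on $S$, so $\underline{C}(t_0)=\overline{C}(t_0)=1$, and continuity gives both limits equal to $1$.

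Under \eqref{eq:A}, $h$ extends continuously to $[0,\tau_{\max}]\times M$ as a positive definite tensor on the slice directions. So $F$ extends to a positive continuous function on $[0,\tau_{\max}]\times S$, where now $t_0\in[0,\tau_{\max}]$ and $S$ is defined with respect to the extended $h_{t_0}$, which is still a continuous Riemannian metric, so $S$ is still compact. \Cref{lemma:topological_lemma} applies with $X=[0,\tau_{\max}]$ directly.

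The only point needing care is the continuity of $F$, including up to the boundary. This rests on reading \eqref{eq:A} as continuity of the extended tensor jointly in $(t,x)$, not merely slicewise, and on writing $F$ in local trivializations of $TM$ as $\sqrt{h_{ij}(t,x)v^iv^j}$, which is continuous in all variables.
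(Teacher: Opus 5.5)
Your proposal is correct and follows essentially the same route as the paper: take the extremes of $\sqrt{h_t(v,v)_p}$ over the $h_{t_0}$-unit sphere bundle, get continuity from \Cref{lemma:topological_lemma} on compact time intervals (the paper exhausts $(0,\tau_{\max})$ by $[\varepsilon,\tau_{\max}-\varepsilon]$), and conclude by homogeneity together with $F(t_0,\cdot)\equiv 1$. Your explicit remarks on the positivity of $\underline{C}$ and on the compactness of the unit sphere bundle when $h_{t_0}$ is only continuous (the boundary case under \eqref{eq:A}) fill in details the paper leaves implicit.
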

\begin{proof}
    Let $0<\varepsilon<\tau_{\max}/2$ and define the function
    \[
        \xymatrix@C=5pt@R=1pt{
            f_\varepsilon \colon & [\varepsilon,\tau_{\max}-\varepsilon] \times STM_{h_{t_0}} \ar[rr] && \ \R \\
             & (t,(p,v)) \ar@{|->}[rr] && \ \sqrt{h_{t}(v,v)_p}
        }
    \]
    where \(STM_{h_{t_0}}\) is the unit tangent bundle of $M$ with respect to the Riemannian metric \(h_{t_0}\). The function \(f_\varepsilon\) is continuous as the metrics \(h_t\) vary smoothly. As \(M\) is compact, it is well known that \(STM_{h_{t_0}}\) is compact as well. Hence, the domain of \(f_\varepsilon\) is the product of two compact manifolds and thus, since any manifold is metrizable, a product of compact metric spaces. As the product metric on the product induces the same topology as the product topology, \(f_\varepsilon\) is continuous even when considered as a function from the metric product of \([\varepsilon,\tau_{\max}-\varepsilon]\) and \(STM_{h_{t_0}}\). Thus, \Cref{lemma:topological_lemma} applies and the functions \(\overline{C}_\varepsilon(t)\) and \(\underline{C}_{\,\varepsilon}(t)\) defined as the maximum and the minimum respectively of the function \(f_\varepsilon(t,(p,v))\) over \(\smash{STM_{h_{t_0}}}\) are continuous. Now, it is clear that if $0<\varepsilon'<\varepsilon$, then $f_{\varepsilon'}|_{[\varepsilon,\tau_{\max}-\varepsilon]}=f_\varepsilon$, and therefore similarly for \(\overline{C}_{\varepsilon'}\) and \(\underline{C}_{\,\varepsilon'}\). As a consequence, there exist continuous functions \(f,\overline{C},\underline{C}\) defined on \((0,\tau_{\max})\) which extend, respectively, all \(f_\varepsilon\), \(\overline{C}_\varepsilon\) and \(\underline{C}_{\,\varepsilon}\).
    
    Notice that the function \(f(t_0,(p,v))\) is constantly equal to 1 for every choice of \((p,v) \in STM_{h_{t_0}}\). In particular we have that \(\overline{C}(t_0) = \underline{C}(t_0)=1\) and thus by continuity we get the second claim of the lemma. About the first one, we notice that it is trivially satisfied whenever \(v=0\), so we will assume that \(v\neq0\). We then have that
    \[
        \sqrt{h_{t}(v,v)_p} = \sqrt{h_{t_0}(v,v)_p} \;
        \sqrt{h_{t}\left(\frac{v}{\sqrt{h_{t_0}(v,v)_p}}, \frac{v}{\sqrt{h_{t_0}(v,v)_p}}\right)_p},
    \]
    so that now the vector is normalized with respect to \(h_{t_0}\) and thus belongs to \(STM_{h_{t_0}}\). We can then estimate from above and below by \(\overline{C}(t)\) and \(\underline{C}(t)\), yielding
    \[
        \underline{C}(t)\sqrt{h_{t_0}(v,v)_p}\leq \sqrt{h_{t}(v,v)_p} \leq \overline{C}(t)\sqrt{h_{t_0}(v,v)_p},
    \]
    which proves the claim. Finally, if the tensors $h_t$ can be continuously extended to $t=0$ and $t=\tau_{\max}$, then one can work directly with the map $f$ defined on \([0,\tau_{\max}]\), obtaining the desired result.
\end{proof}
    As an immediate corollary we get the following estimate on the induced distances \(d_t\) on the slices $M_t$:
\begin{corollary}\label{corollary:distances_are_lipschitz_equivalent}
    Let \((N,g)\) be a cosmological spacetime with compact slices. The induced distances \(d_t\) are all Lipschitz equivalent. In particular we have that
    \[
        \underline{C}(t) d_{t_0} \leq d_t \leq \overline{C}(t) d_{t_0}.
    \]
\end{corollary}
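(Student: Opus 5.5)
The plan is to integrate the pointwise tensor inequality of \Cref{lemma:riemannian_metrics_are_lipschitz_equivalent} along curves in $M$ and then pass to the infimum defining the Riemannian distances. Throughout, fix $t$ (and $t_0$) in the range allowed by the lemma: $(0,\tau_{\max})$ in general, and $[0,\tau_{\max}]$ under \eqref{eq:A}. The claimed Lipschitz equivalence of all the $d_t$ then follows at once, because $\underline{C}(t)$ and $\overline{C}(t)$ are strictly positive finite numbers.

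For the upper bound, take $x,y\in M$ and any piecewise smooth curve $\gamma\colon[0,1]\to M$ joining them. The pointwise bound $\sqrt{h_t(\dot\gamma,\dot\gamma)}\leq \overline{C}(t)\sqrt{h_{t_0}(\dot\gamma,\dot\gamma)}$ holds at every point of differentiability. Since $\overline{C}(t)$ does not depend on the point of $M$, integrating gives $L_{h_t}(\gamma)\leq \overline{C}(t)L_{h_{t_0}}(\gamma)$. Since $d_t(x,y)\leq L_{h_t}(\gamma)$, taking the infimum over $\gamma$ on the right yields $d_t(x,y)\leq \overline{C}(t)\,d_{t_0}(x,y)$.

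The lower bound is symmetric. Integrating the lower pointwise bound gives $\underline{C}(t)L_{h_{t_0}}(\gamma)\leq L_{h_t}(\gamma)$. Bounding the left side below by $\underline{C}(t)\,d_{t_0}(x,y)$ and then taking the infimum over $\gamma$ on the right gives $\underline{C}(t)\,d_{t_0}(x,y)\leq d_t(x,y)$.

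There is no real obstacle here. The only point needing care is that both distances are infima over the same class of curves in $M$, which holds because $h_t$ and $h_{t_0}$ are metrics on the same compact manifold. One should also note that at the endpoints $t\in\{0,\tau_{\max}\}$ the extended tensor is only continuous in $t$, but for fixed $t$ it is still a Riemannian metric, so $d_t$ is defined in the usual way.
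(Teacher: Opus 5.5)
Your proposal is correct and is essentially the argument the paper has in mind when it calls this an immediate corollary of \Cref{lemma:riemannian_metrics_are_lipschitz_equivalent}: integrate the pointwise bounds, which are uniform over $M$, along curves in $M$ and take the infimum on each side. Your handling of the lower bound, bounding $\underline{C}(t)\,d_{t_0}(x,y)\leq \underline{C}(t)L_{h_{t_0}}(\gamma)\leq L_{h_t}(\gamma)$ before taking the infimum over $\gamma$, is exactly the step that is needed.
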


\subsubsection{Proof of \texorpdfstring{\Cref{thm:compactness_nullcompletion}}{compactness of the associated metric space}}
To show that a cosmological spacetime $N=(0,\tau_{\max})\times M$ with compact slices and satisfying assumption~\eqref{eq:A} is causally-null compactifiable, we will prove that the null distance on such a space is bi-Lipschitz to the \say{taxi-distance} \eqref{eq:taxi_distance_def}. To do so, we employ \Cref{lemma:riemannian_metrics_are_lipschitz_equivalent} to give a bound for the null distance between points in the same time-slice in terms of the Riemannian distance induced on the slice. Such a result was first proven in \cite[Theorem 1.2]{nigri2025nulldistancetemporalfunctions} in great generality. Here, we state the theorem and give an alternative proof.
\begin{theorem}\label{thm:generalised_upper_bound}
    Consider a spacetime \((N,g)\) which admits a smooth temporal function \(f \colon N \to \R\), that is \(f\) has past directed timelike gradient. Suppose that there exists \(t_0 \in \R\) such that on the level set \(S=f^{-1}(t_0)\) we have that \(g(\nabla f, \nabla f) = -K^2\) for some constant \(K >0\). We denote by \(h=h_{t_0}\) the Riemannian metric tensor obtained by restricting \(g\) to \(S\). Then for all \(p,q \in S\) it holds that
    \[
        \hat{d}_f(p,q) \leq Kd_{h}(p,q),
    \]
    where \(\d_f\) is the \(f\)-null distance, obtained by replacing the cosmological time in \Cref{def:null-distance} by \(f\) and \(d_h\) is the distance in $S$ induced by the Riemannian tensor $h$.
\end{theorem}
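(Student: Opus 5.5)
Since $d_h$ is a length distance, it suffices to show that for every smooth curve $\sigma\colon[0,1]\to S$ from $p$ to $q$ and every $\varepsilon>0$ we have $\d_f(p,q)\leq K L_h(\sigma)+\varepsilon$. The plan is to approximate $\sigma$ by zigzag piecewise null curves that oscillate around $S$, in the spirit of the standard argument giving $\d\leq$ (warping factor)$\cdot d_h$ for warped products, and to compute their null length in the limit of fine partitions.

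\textbf{Local splitting.} Near $S$, flow along the vector field $X=-\nabla f/g(\nabla f,\nabla f)$, which satisfies $X(f)=1$. Since $\sigma([0,1])$ is compact, this gives a neighborhood $U\supset\sigma([0,1])$ and a diffeomorphism $U\cong(t_0-\delta,t_0+\delta)\times V$, $V\subset S$ open, in which $f$ is the first coordinate. In these coordinates
\[
g=-\phi^2\,df^2+\tilde h_{f},
\qquad \phi=\frac{1}{\abs{\nabla f}}, \qquad \phi\equiv \frac{1}{K}\ \text{on } S ,
\]
and $\tilde h_{t_0}=h$. There are no cross terms because $X$ is $g$-orthogonal to the level sets. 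Now partition $[0,1]$ into $n$ pieces $s_i$ and join $\sigma(s_{i-1})$ to $\sigma(s_i)$ by a future null segment followed by a past null segment. For the first segment take the curve
\[
\gamma(s)=\bigl(t_0+u(s),\,\sigma(s)\bigr), \qquad s\in[s_{i-1},m_i],
\]
with $u$ solving the null condition $\phi\,u'=\sqrt{\tilde h(\sigma',\sigma')}$, $u(s_{i-1})=0$; the second segment, on $[m_i,s_i]$, is chosen symmetrically so that it returns to $S$. Since $\gamma$ stays within height $O(1/n)$ of $S$, we have $\phi=1/K+o(1)$ and $\tilde h=h+o(1)$ uniformly along it.

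\textbf{Null length estimate.} Along the zigzag, the null length is the total variation of $f$, that is, $\sum_i 2\max u$. Each rise satisfies $u\approx K\,L_h(\sigma|_{[s_{i-1},m_i]})$ up to a factor $(1+o(1))$, and the descent consumes the remaining spatial length; the choice of $m_i$ is what makes the curve return to $S$. Summing gives a total variation of $K L_h(\sigma)(1+o(1))$. Here the factor $2$ is cancelled because each half covers half the spatial length. Letting $n\to\infty$ yields $\d_f(p,q)\le K L_h(\sigma)$, and taking the infimum over $\sigma$ proves the claim.

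\textbf{Main obstacle.} The delicate point is the uniformity of these errors. One has to ensure that the ODE solutions exist and stay inside $U$, and that they return exactly to $S$ at $s_i$. The latter requires choosing $m_i$ by an intermediate value argument: the height reached at the end of the descent is continuous in $m_i$, starts positive for $m_i=s_i$ and is negative for $m_i=s_{i-1}$. It also requires that the errors in $\phi$ and $\tilde h$ are $o(1)$ uniformly in $i$. Both should follow from compactness of $\sigma([0,1])$ together with smoothness of $\phi$ and $\tilde h$. Whenever $\sigma'=0$ on an interval, a nearby regular reparametrization can be used instead.
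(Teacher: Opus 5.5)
Your proposal is correct and is essentially the paper's argument: a splitting \(g=-|\nabla f|^{-2}df^2+\tilde h_f\) near \(\sigma\), zigzag piecewise null curves with spatial projection \(\sigma\) whose time component solves the null ODE \(u'=|\nabla f|\sqrt{\tilde h(\sigma',\sigma')}\), an intermediate value argument to place the apex (the paper integrates the descent backwards from the endpoint and intersects it with the ascent, which avoids continuous dependence on \(m_i\)), and the bound \(\hat{L}_f\le (K+\varepsilon)\,\sup\overline{C}\,L_h(\sigma)\) before refining the partition. One harmless slip: with your definition \(X(f)=-1\); the field with \(X(f)=1\) is \(X=\nabla f/g(\nabla f,\nabla f)\), and nothing else changes.
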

\begin{proof}
    The result is trivial if \(p=q\), so we assume \(p \neq q\). We fix \(\varepsilon' \in (0,1)\) and consider any two points \(p,q \in S\) together with an \(\varepsilon'\)-geodesic in \((S,h)\) joining them, that is a curve \(\sigma \colon [a,b] \to S\) such that \(d_h(p,q) + \varepsilon' > L_h(\sigma)\), parametrized with unit speed.

    The main idea is to build a piecewise lightlike curve joining \(p\) to \(q\) whose \say{spatial component} is given by \(\sigma\). To do so, for every \(t \in [a,b]\) we employ the generalized Gauss Lemma at \(\sigma(t)\), which yields the existence of a neighborhood of \(\sigma(t)\) where, for any choice of local coordinates \(\theta\) in the manifold \(S\), we have that \((f,\theta)\) are local coordinates of \(N\) at \(\sigma(t)\). Moreover, with respect to this chart the metric can be written as
    \[
        g = -\frac{1}{\abs{\nabla f}^2(f,\theta)} \, df \otimes df + g_{ij}(f,\theta) \, d\theta^i \otimes d\theta^j.
    \]
    Notice that in these coordinates, points of the form \((t_0, \theta)\) lie in \(S\) and the Riemannian metric on \(S\) reads as \(h = g_{ij}(t_0,\theta) \, d\theta^i \otimes d\theta^j\).
    
    Up to shrinking, it is not restrictive to assume that the neighborhood of \(\sigma(t)\) is of the form \(U_t \coloneqq [-\delta_t +t_0, \delta_t+t_0] \times B_h(\sigma(t), r_t)\) in these coordinates for some \(\delta_t, r_t > 0\). By compactness of the curve \(\sigma\), there exists a finite set of times \(\{t_l\}_{l=0}^{N_1}\) corresponding to finitely many neighborhoods \(U_l \coloneqq U_{t_l}\) of the above form covering the curve. By taking \(\tilde{\delta} < \min_l \delta_{t_l}\) and shrinking the neighborhoods we can assume that their first factor is given by \([-\tilde\delta +t_0, t_0 + \tilde\delta]\). 
    
    For each \(l \in \{0,\dots,N_1\}\) we can apply the same reasoning in the proof of \Cref{lemma:riemannian_metrics_are_lipschitz_equivalent} to the family of metrics \(g_{ij}(f,\theta) \, d\theta^i \otimes d\theta^j\) on the manifold \([-\tilde\delta +t_0, t_0 + \tilde\delta]\times B_h(\sigma(t_l), r_{t_l})\) to show the existence of continuous functions \(\overline{C_l}(f), \underline{C_l}(f) \colon [-\tilde\delta + t_0, \tilde\delta+ t_0] \to (0,\infty)\) such that
    \[
        \underline{C_l}(f)^2 \, h  \leq g_{ij}(f,\theta) \, d\theta^i \otimes d\theta^j\leq \overline{C_l}(f)^2 h.
    \]

    We define the function \(\overline{C} \colon [-\tilde\delta + t_0, t_0 +\tilde\delta] \to \R\) as \(\overline{C}(t) =\max_l \overline{C_l}(t)\). Notice that this is a continuous function with \(\overline{C}(t_0)=1\).
    
    Fixing \(\varepsilon \in (0,\tilde{\delta})\), since \(\abs{\nabla f}(\sigma(t)) = K\), by continuity we can find product neighborhoods \(V_t\) of points on the curve each contained in some \(U_l\) and such that on each of them the oscillation of \(\abs{\nabla f}\) is bounded by \(\varepsilon\). As these neighborhoods cover \(\sigma([a,b])\), which is compact, we can again extract a finite subcover; moreover we can apply Lebesgue's number Lemma to ensure the existence of a positive \(\delta > 0\) such that for any pair of times \(t_1<t_2\) with \(\abs{t_1-t_2}<\delta\), the image \(\sigma([t_1, t_2])\) is entirely contained in one element of the subcover. Hence we partition the interval \([a,b]\) into segments of length less than \(\delta\) and we work on each segment separately. 
    
    Let \([c,d]\) be any of these segments, so that \(\sigma([c,d]) \subset V_t \subset U_l\) for some \(t \in [a,b]\) and some $l\in\{0,\ldots,N_1\}$.
    We now look for a value \(N \in \N\) such that, defining \(c_k = c+k\cdot (d-c)/N\) for \(k \in \{0, \dots, N-1\}\), on each interval \([c_k, c_{k+1}]\) we have that
    \[
        \int_{c_k}^{c_{k+1}} \abs{\nabla f}(\alpha(s),\theta(\sigma(s)))\sqrt{g_{ij}\bigl(\alpha(s), \theta(\sigma(s))\bigr)\  \dot{\sigma}^i(s) \, \dot{\sigma}^j(s)}\, ds < \min(\varepsilon, \delta_t/2)
    \]
    for any continuous function \(\alpha \colon[c_k, c_{k+1}] \to [-\delta_t/2 +t_0, \delta_t/2 +t_0]\).
    Such an \(N\) exists as, denoting by \(M\) the maximum of the function \(\overline{C}(f)\) over the interval \([-\delta_t +t_0, \delta_t +t_0]\), we can see that
    \[
        \begin{split}
            \int_{c_k}^{c_{k+1}} \abs{\nabla f}(\alpha(s),\theta(\sigma(s)))&\sqrt{g_{ij}\bigl(\alpha(s), \theta(\sigma(s))\bigr)\  \dot{\sigma}^i(s) \, \dot{\sigma}^j(s)}\, ds \\
            &\leq \int_{c_k}^{c_{k+1}} (K + \varepsilon) \, \overline{C}(\alpha(s)) \sqrt{h(\dot{\sigma}(s),\dot{\sigma}(s))} \, ds \\
            &\leq M(K+\tilde{\delta})\int_{c_k}^{c_{k+1}} \sqrt{h(\dot{\sigma}(s),\dot{\sigma}(s))} \, ds \\
            &= M(K+\tilde{\delta})\cdot L_{h}(\sigma_{\vert [c_k,c_{k+1}]})\\
            &=M(K+\tilde{\delta}) \abs{c_k -c_{k+1}} =\frac{M(K+\tilde{\delta})}{N},
        \end{split}
    \]
    where in the first line we used the fact that \(\abs{\nabla f}\) has bounded oscillation inside the neighborhood \([-\delta_t +t_0, \delta_t +t_0] \times B_h(\sigma(t),r_t)\).
    Hence, choosing \(N > M(K+\tilde{\delta})/\min(\varepsilon, \delta_t/2)\) is sufficient.

    We then look at the restriction of \(\sigma\) on each interval \([c_k, c_{k+1}]\); we consider the function \(u \colon [c_k, c_{k+1}] \to \R\) defined as the solution of the following ODE
    \begin{equation}\label{eq:reparametrization_trick_u}
        \begin{dcases}
            u(c_k) = 0\\
            u'(s) = \abs{\nabla f}(u(s)+ t_0,\theta(\sigma(s)))\sqrt{g_{ij}(t_0+u(s), \theta(\sigma(s))) \, \dot{\sigma}^i(s) \, \dot{\sigma}^j(s)}
        \end{dcases}
    \end{equation}
    and the function \(w \colon [c_k, c_{k+1}] \to \R\) defined by
    \begin{equation}\label{eq:reparametrization_trick_w}
        \begin{dcases}
            w(c_{k+1}) = 0\\
            w'(s) = -\abs{\nabla f}(w(s)+t_0,\theta(\sigma(s)))\sqrt{g_{ij}(t_0+w(s), \theta(\sigma(s))) \, \dot{\sigma}^i(s) \, \dot{\sigma}^j(s)}\,.
        \end{dcases}
    \end{equation}
    Such solutions exists by standard ODE theory, as the right hand side is smooth and bounded from above and below thanks to \Cref{lemma:riemannian_metrics_are_lipschitz_equivalent} along with the fact that \(\abs{\nabla f}\) has bounded oscillation. Notice that both functions are strictly monotone with \(u\) being increasing and \(w\) decreasing, since the derivative never vanishes as \(\sigma\) is a geodesic of constant non-vanishing speed. Moreover, by our choice of the intervals \([c_k,c_{k+1}]\) both \(u\) and \(w\) take values in the interval \([0,\min(\varepsilon, \delta_t/2))\). 
    
    Notice that the function \(u-w\) is strictly increasing, with \((u-w)(c_k) = -w(c_k) <0\) and \((u-w)(c_{k+1}) = u(c_{k+1}) > 0\). Hence there exists a unique intermediate time \(b_k \in [c_k,c_{k+1}]\) such that \((u-w)(b_k) = 0\), or equivalently \(u(b_k) = w(b_k)\). We also notice that both functions are bounded above by \(\varepsilon\) by our choice of \(N\).

    We now define a curve \(\beta_k \colon [c_k,c_{k+1}] \to [-\delta_t +t_0, \delta_t +t_0] \times B_h(\sigma(t), r_t)\) as
    \[
        \beta_k(s) = 
        \begin{dcases}
            (t_0 +u(s),\sigma(s)) \ \text{if} \ t \in [c_k,b_k]\\
            (t_0 +w(s), \sigma(s)) \ \text{if} \ t \in [b_k, c_{k+1}],
        \end{dcases}
    \]
    where we recall that the first coordinate function is given by \(f\). By our choice of \(b_k\) such a curve is continuous, starting at \((t_0,\sigma(c_k))\) and ending at \((t_0, \sigma(c_{k+1}))\). Moreover it is immediate to see that this curve is piecewise causal using \eqref{eq:reparametrization_trick_u} and \eqref{eq:reparametrization_trick_w} and in fact piecewise null; the first piece is future directed as  \(u'>0\), whereas the second part is past directed since \(w'<0\). We then have that
    \[
        \begin{split}
            \hat{L}_f(\beta_k) &= u(b_k) - u(c_k) + w(b_k) -w(c_{k+1})\\
                            &= \int_{c_k}^{b_k} u'(s) \,ds -\int_{b_k}^{c_{k+1}} w'(s) \, ds\\
                            &= \int_{c_k}^{b_k}\abs{\nabla f}(u(s)+ t_0,\theta(\sigma(s)))\sqrt{g_{ij}(t_0+u(s), \theta(\sigma(s))) \, \dot{\sigma}^i(s) \, \dot{\sigma}^j(s)} \,ds \\
                            &+ \int_{b_k}^{c_{k+1}}\abs{\nabla f}(w(s)+t_0,\theta(\sigma(s)))\sqrt{g_{ij}(t_0+w(s), \theta(\sigma(s))) \, \dot{\sigma}^i(s) \, \dot{\sigma}^j(s)}ds \\
                            &\leq \int_{c_k}^{b_k}(K+\varepsilon) \, \overline{C}(t_0+u(s)) \sqrt{h(\dot{\sigma}(s),\dot{\sigma}(s))} \,ds \\
                            &\quad + \int_{b_k}^{c_{k+1}}(K+\varepsilon)\, \overline{C}(t_0+w(s)) \sqrt{ h(\dot{\sigma}(s),\dot{\sigma}(s))} \,ds. 
        \end{split}
    \]
    We now recall that both \(u\) and \(w\) take values in \([0,\min(\varepsilon,\delta_t/2))\), thus we can bound the function \(\overline{C}\) in both integrands by its supremum over the interval \([t_0,t_0 +\min(\varepsilon, \delta_t/2))\). Therefore we get that
    \[
        \begin{split}
            \hat{L}_f(\beta_k) &\leq
            (K+\varepsilon) \int_{c_k}^{c_{k+1}} \sqrt{h(\dot{\sigma}(s),\dot{\sigma}(s))} \, ds\:
            \sup_{s' \in [t_0, t_0+\min(\varepsilon, \delta_t/2))} \overline{C}(s')
            \\ 
            &\leq  (K+\varepsilon) L_{h}(\sigma_{\vert [c_k,c_{k+1}]})\sup_{s' \in [t_0, t_0+\varepsilon)} \overline{C}(s').
        \end{split}
    \]
    For each \(k \in \{0,\dots, N-1\}\) we can build such a curve \(\beta_k\) and each of them can be concatenated, since the endpoint of \(\beta_k\) is the starting point of \(\beta_{k+1}\). We then get a piecewise causal curve \(\beta_{c,d}\) from \(\sigma(c)\) to \(\sigma(d)\) such that
    \[
        \begin{split}
            \hat{L}_f(\beta_{c,d}) &= \sum_{k=0}^{N-1} \hat{L}_f(\beta_k) \leq (K+\varepsilon) \sum_{k=0}^{N-1} L_h(\sigma_{\vert [c_k, c_{k+1}]})\sup_{s' \in [t_0, t_0+\varepsilon)}\overline{C}(s') \\
            &= (K+\varepsilon) L_h(\sigma_{\vert[c,d]})\sup_{s' \in [t_0, t_0+\varepsilon)}\overline{C}(s').
        \end{split}
    \]
    As this holds on every interval \([c,d]\) of the original partition of \([a,b]\), we can again concatenate all the curves \(\beta_{c,d}\) to get a piecewise causal curve \(\beta\) joining \(p\) to \(q\) which, by additivity of \(\hat{L}_f\) and \(L_h\) will satisfy
    \[
        \d_f(p,q) \leq\hat{L}_f(\beta) \leq (K+\varepsilon)L_h(\sigma) \sup_{s' \in [t_0, t_0+\varepsilon)}\overline{C}(s')  \leq (K+\varepsilon) (d_h(p,q) + \varepsilon') \sup_{s' \in [t_0, t_0+\varepsilon)}\overline{C}(s').
    \]
    Sending \(\varepsilon\to0\) and then \(\varepsilon' \to 0\), and using continuity of the function \(\overline{C}\) we get that
    \[
        \hat{d}_f(p,q) \leq K d_h(p,q). \qedhere
    \]
\end{proof}

\begin{corollary}\label{cor:bound_nulldist_basedist}
    For a cosmological spacetime $N=(0,\tau_{\max})\times M$ and $t_0\in (0,\tau_{\max})$, we have that
    \[
        \d_g((t_0,x),(t_0,y)) \leq d_{t_0}(x,y).
    \]
\end{corollary}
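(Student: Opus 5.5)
This is a direct application of \Cref{thm:generalised_upper_bound} with \(f=t\), the coordinate time, which coincides with the cosmological time \(\tau_g\) after the normalization \(t_1=0\). The plan is to check the hypotheses of that theorem for the cosmological spacetime \(N=(0,\tau_{\max})\times M\) with \(g=-dt^2+h\), and then read off the constant \(K\).

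\emph{Temporal function with unit gradient.} Let \(f=t\), which is smooth. Because \(h(\partial_t,\cdot)=0\) (\Cref{item:radical}) and \(g(\partial_t,\partial_t)=-1\), the vector field \(\partial_t\) is \(g\)-orthogonal to every slice \(M_{t}\). We have \(dt(\partial_t)=1\) and \(dt(v)=0\) for \(v\) tangent to \(M\). Hence \(\nabla t=-\partial_t\), since \(g(-\partial_t,\partial_t)=1=dt(\partial_t)\) and \(g(-\partial_t,v)=0=dt(v)\). This gradient is past directed timelike for the time orientation given by \(\partial_t\), so \(t\) is a smooth temporal function. Moreover \(g(\nabla t,\nabla t)=-1\) everywhere, in particular on \(S=t^{-1}(t_0)=\{t_0\}\times M\). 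So the hypothesis of \Cref{thm:generalised_upper_bound} holds with \(K=1\).

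\emph{Identifying the quantities.} The Riemannian metric that \(g\) induces on \(S\) is \(g|_{TS}=h_{t_0}\), because the \(-dt^2\) term vanishes on vectors tangent to \(S\). Its distance is therefore \(d_{t_0}\) on \(M\). The \(f\)-null distance \(\hat d_t\) is exactly \(\d_g\), because the null distance in this paper is taken with respect to the cosmological time \(\tau_g=t\). Applying \Cref{thm:generalised_upper_bound} then gives \(\d_g((t_0,x),(t_0,y))\le d_{t_0}(x,y)\).

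The only step needing any care is the gradient computation, and in particular the sign and time-orientation convention. The orientation is chosen so that \(t\) increases along future directed curves, which makes \(-\partial_t\) past directed. This is routine, so I expect no real obstacle. The restriction \(t_0\in(0,\tau_{\max})\) only ensures that the slice lies inside the open manifold \(N\).
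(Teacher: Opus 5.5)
Your proposal is correct and takes the same approach as the paper: apply \Cref{thm:generalised_upper_bound} with \(f=t\). Its gradient \(-\partial_t\) satisfies \(g(\nabla t,\nabla t)=-1\), so \(K=1\), and the slice \(\{t_0\}\times M\) carries the induced metric \(h_{t_0}\). You also write out the gradient computation, the orientation check and the identification of \(\hat{d}_t\) with \(\hat{d}_g\), which the paper's one-line proof leaves implicit.
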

\begin{proof}
    The cosmological time function \(t\) is a temporal function whose gradient has constant norm \(-1\), hence applying \Cref{thm:generalised_upper_bound} to the level set \(\{t=t_0\}\) immediately yields the claim.
\end{proof}
\begin{lemma}\label{lem:bound_null_distance}
    Let $N=(0,\tau_{\max})\times M$ be a cosmological spacetime, $s,t\in(0,\tau_{\max})$ and $x,y\in M$. Fix $t_0\in(0,\tau_{\max})$. Then
    \[
    \hat{d}_g\bigl((t,x),(s,y)\bigr)\leq \abs{t-t_0}+\abs{s-t_0}+d_{t_0}(x,y).
    \]
    In particular, if there exists some $t_0\in(0,\tau_{\max})$ such that $\diam_{d_{t_0}}(M)$ is bounded above by $D$, then $\diam_{\hat{d}_g}(\bar{N})$ is bounded above by $2\tau_{\max}+D$.
\end{lemma}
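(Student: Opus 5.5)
The plan is to use the triangle inequality for the null distance, going from $(t,x)$ down or up to the slice $\{t_0\}\times M$ along the vertical line over $x$, crossing the slice, and then leaving it along the vertical line over $y$. Concretely,
\[
\d_g\bigl((t,x),(s,y)\bigr)\leq \d_g\bigl((t,x),(t_0,x)\bigr)+\d_g\bigl((t_0,x),(t_0,y)\bigr)+\d_g\bigl((t_0,y),(s,y)\bigr).
\]

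\textbf{Vertical terms.} The curve $r\mapsto (r,x)$, with $r$ between $t_0$ and $t$, has tangent $\partial_t$. By \Cref{item:radical} we have $g(\partial_t,\partial_t)=-1$, so this curve is timelike, hence causal, and it is future directed from the earlier to the later point. So $(t,x)$ and $(t_0,x)$ are causally related. Its null length with respect to $\tau_g=t$ is $\abs{t-t_0}$, which gives $\d_g((t,x),(t_0,x))\leq\abs{t-t_0}$; in fact equality holds, since $\d\geq\abs{\Delta\tau}$. The same argument gives $\d_g((t_0,y),(s,y))\leq\abs{s-t_0}$.

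\textbf{Middle term.} \Cref{cor:bound_nulldist_basedist} bounds it directly: $\d_g((t_0,x),(t_0,y))\leq d_{t_0}(x,y)$. Summing the three bounds proves the main inequality.

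\textbf{Diameter bound.} Assume $\diam_{d_{t_0}}(M)\leq D$ for some $t_0\in(0,\tau_{\max})$. For $t,s\in(0,\tau_{\max})$ we have $\abs{t-t_0}+\abs{s-t_0}\leq 2\tau_{\max}$, so every pair of points of $N$ is at $\d_g$-distance at most $2\tau_{\max}+D$. Now pass to the completion. Any two points of $\bar N$ are limits of Cauchy sequences in $N$, and the extended distance is the limit of the distances along these sequences. Hence the same bound holds on $\bar N$, giving $\diam_{\hat d_g}(\bar N)\leq 2\tau_{\max}+D$.

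There is no substantial obstacle here. The only point needing care is checking that the vertical segments are admissible piecewise causal curves in the sense of \Cref{def:piecewise-causal}, which holds because they are smooth and timelike. The nontrivial content has already been isolated in \Cref{thm:generalised_upper_bound}.
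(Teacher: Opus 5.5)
Your proposal is correct and follows the paper's proof: the triangle inequality through the intermediate points $(t_0,x)$ and $(t_0,y)$. The vertical legs contribute $\abs{t-t_0}$ and $\abs{s-t_0}$ because they are causal, and the middle term is bounded by \Cref{cor:bound_nulldist_basedist}. One small correction: $g(\partial_t,\partial_t)=-1$ does not follow from \Cref{item:radical} alone, but from the paper's standing convention that $h$ is a $t$-dependent family of metrics on $M$, i.e.\ $h(\partial_t,\cdot)=0$, which the paper also uses tacitly.
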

\begin{proof}
    Consider the points $(t,x)$, $(t_0,x)$, $(t_0,y)$ and $(s,y)$ and use the triangle inequality for $\d_g$ and \Cref{cor:bound_nulldist_basedist}.
\end{proof}
 The next result gives a Lipschitz bound from above for the null distance in terms of the taxi distance defined in \eqref{eq:taxi_distance_def}.
\begin{corollary}\label{corollary:lip_embedding_above}
    Let $N=(0,\tau_{\max})\times M$ be a cosmological spacetime with compact slices satisfying assumption~\eqref{eq:A}. Fix \(t_0 \in [0,\tau_{\max}]\) and define \(C = \max_{t\in[0,\tau_{\max}]} \overline{C}(t)\), where \(\overline{C}(t)\) is defined as in \Cref{lemma:riemannian_metrics_are_lipschitz_equivalent}. For any pair of points \((t,x),(s,y) \in N\) it holds that
    \[
        \d_g((t,x),(s,y)) \leq \max(1,C)(\abs{t-s} +d_{t_0}(x,y)).
    \]
\end{corollary}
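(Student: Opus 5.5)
The plan is to route from \((t,x)\) to \((s,y)\) through the slice at time \(t\) (or \(s\)), and to use only the triangle inequality, \Cref{cor:bound_nulldist_basedist} and \Cref{corollary:distances_are_lipschitz_equivalent}. Without loss of generality assume \(t\le s\).

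First, the two points \((t,y)\) and \((s,y)\) are joined by the vertical curve \(r\mapsto (r,y)\). Since \(g(\partial_t,\partial_t)=-1\), this curve is future directed causal, so
\[
\d_g\bigl((t,y),(s,y)\bigr)=\abs{t-s}.
\]

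Second, for the spatial displacement I work on the slice \(\{t\}\times M\), with \(t\in(0,\tau_{\max})\) since the points lie in \(N\). \Cref{cor:bound_nulldist_basedist}, applied with base time \(t\) in place of \(t_0\), gives
\[
\d_g\bigl((t,x),(t,y)\bigr)\le d_t(x,y).
\]
Then \Cref{corollary:distances_are_lipschitz_equivalent}, in the version valid under \eqref{eq:A} where \(\overline{C}\) is continuous on \([0,\tau_{\max}]\) for any \(t_0\in[0,\tau_{\max}]\), yields
\[
d_t(x,y)\le \overline{C}(t)\,d_{t_0}(x,y)\le C\,d_{t_0}(x,y).
\]
Here \(C\) is finite because \(\overline{C}\) is continuous on a compact interval. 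When \(t_0\in\{0,\tau_{\max}\}\), the distance \(d_{t_0}\) is the one induced by the continuously extended Riemannian metric. The pointwise inequality \(\sqrt{h_t(v,v)}\le\overline{C}(t)\sqrt{h_{t_0}(v,v)}\) from \Cref{lemma:riemannian_metrics_are_lipschitz_equivalent} passes to lengths of curves and hence to the induced distances, which is exactly the content of the corollary.

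Combining the two steps with the triangle inequality gives
\[
\d_g\bigl((t,x),(s,y)\bigr)\le C\,d_{t_0}(x,y)+\abs{t-s}\le \max(1,C)\bigl(\abs{t-s}+d_{t_0}(x,y)\bigr).
\]

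The only delicate point is applying the Lipschitz comparison with \(t_0\) possibly at an endpoint. I would handle it by noting that \Cref{lemma:riemannian_metrics_are_lipschitz_equivalent} explicitly covers \(t_0\in[0,\tau_{\max}]\) under \eqref{eq:A}, and that the slice used in \Cref{cor:bound_nulldist_basedist} is always an interior one. No obstacle beyond this bookkeeping is expected.
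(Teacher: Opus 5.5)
Your proposal is correct and follows the paper's proof exactly: the triangle inequality through \((t,y)\), the vertical causal segment contributing \(\abs{t-s}\), then \Cref{cor:bound_nulldist_basedist} on the interior slice at time \(t\) followed by \(d_t\le\overline{C}(t)\,d_{t_0}\le C\,d_{t_0}\). Your extra remarks (the WLOG \(t\le s\), which is not needed, and the bookkeeping for an endpoint \(t_0\)) are harmless.
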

\begin{proof}
    Applying triangle inequality yields that
    \[
        \begin{split}
            \d_g((t,x),(s,y)) &\leq \d_g((t,x),(t,y)) +\d_g((t,y),(s,y))\\
                                 &= \d_g((t,x),(t,y)) + \abs{t-s}\\
                                 &\leq d_t(x,y) + \abs{t-s},
        \end{split}
    \]
    where in the second line we used the fact that \((t,y)\) and \((s,y)\) must be causally related and thus their null distance is equal in absolute value to the difference of their time coordinate and in the third line we used \Cref{cor:bound_nulldist_basedist}. We now employ \Cref{corollary:distances_are_lipschitz_equivalent} to get
    \[
        \begin{split}
            d_t(x,y) + \abs{t-s} &\leq \overline{C}(t)d_{t_0}(x,y) + \abs{t-s}\\
                                 &\leq Cd_{t_0}(x,y) + \abs{t-s}\\
                                 &\leq \max(1,C)(d_{t_0}(x,y) + \abs{t-s}),
        \end{split}
    \]
    which concludes the proof.
\end{proof}
We now work our way towards a lower bound in terms of the taxi distance. The first step in this direction is given by the following:
\begin{proposition}
    Let $N=(0,\tau_{\max})\times M$ be a cosmological spacetime with compact slices satisfying assumption~\eqref{eq:A}. Fix \(t_0 \in [0,\tau_{\max}]\) and denote \(c=\min_{[0,\tau_{\max}]}\underline{C}(t)\), where \(\underline{C}(t)\) is defined as in \Cref{lemma:riemannian_metrics_are_lipschitz_equivalent}. For any pair of points \((t,x),(s,y) \in N\) it holds that
    \[
        \hat{d}_g((t,x),(s,y)) \geq c\cdot d_{t_0}(x,y).
    \]
\end{proposition}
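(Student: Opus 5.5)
We bound from below the null length of every piecewise causal curve joining \((t,x)\) to \((s,y)\), and then take the infimum. Let \(\alpha=(\alpha^0,\beta)\colon[a,b]\to N\) be such a curve, with partition \(a=t_0'<\dots<t_n'=b\) as in \Cref{def:piecewise-causal}, so that each piece \(\alpha_i=\alpha_{|[t_{i-1}',t_i']}\) is smooth and causal (future or past directed). The plan is to prove, for each piece,
\[
\bigl|\alpha^0(t_i')-\alpha^0(t_{i-1}')\bigr| \;\geq\; c\, L_{h_{t_0}}(\beta_{|[t_{i-1}',t_i']}).
\]
Summing over \(i\) then gives \(\hat{L}(\alpha)\geq c\,L_{h_{t_0}}(\beta)\geq c\,d_{t_0}(x,y)\), since \(\beta\) is a curve in \(M\) from \(x\) to \(y\). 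Taking the infimum over \(\alpha\) yields the claim.

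For a single causal piece, the causality condition together with the splitting \(g=-dt^2+h_{(t,x)}\) (using \Cref{item:radical}, so there are no cross terms) reads
\[
0\geq g(\dot\alpha,\dot\alpha)=-(\dot\alpha^0)^2+h_{\alpha^0(r)}(\dot\beta,\dot\beta),
\qquad\text{that is,}\qquad
|\dot\alpha^0(r)|\geq \sqrt{h_{\alpha^0(r)}(\dot\beta,\dot\beta)_{\beta(r)}}.
\]
Next apply \Cref{lemma:riemannian_metrics_are_lipschitz_equivalent}, using assumption \eqref{eq:A} so that \(\underline{C}\) is continuous and positive on the compact interval \([0,\tau_{\max}]\); hence \(c>0\) and \(c\leq\underline{C}(\alpha^0(r))\) for every \(r\). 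This gives
\[
|\dot\alpha^0(r)|\;\geq\;\underline{C}(\alpha^0(r))\sqrt{h_{t_0}(\dot\beta,\dot\beta)}\;\geq\; c\sqrt{h_{t_0}(\dot\beta,\dot\beta)}.
\]
On a causal piece, \(\dot\alpha^0\) has constant sign: it is positive for future directed pieces, since \(t\) is a time function and \(\dot\alpha\) cannot vanish for causal vectors, and negative for past directed ones. Therefore
\[
\Bigl|\int \dot\alpha^0\Bigr|=\int|\dot\alpha^0|\;\geq\; c\,L_{h_{t_0}}(\beta_{|\text{piece}}),
\]
which is exactly the required per-piece inequality.

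I expect the main subtlety to be this sign argument. It is what allows the absolute value of the integral to be replaced by the integral of the absolute value; without it, oscillation of \(\alpha^0\) within one piece would break the bound. One must also take care with endpoint regularity: pieces are smooth only on open intervals, so continuity of \(\alpha\) is used to pass from integrals over the open intervals to values at the endpoints.
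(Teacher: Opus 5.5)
Your proposal is correct and is essentially the paper's proof. The paper orients each piece future directed and parametrizes it by $t$ itself, which is exactly what your constant-sign observation for $\dot\alpha^0$ licenses. It then uses the causal inequality $|\dot\alpha^0|\geq\sqrt{h_{\alpha^0}(\dot\beta,\dot\beta)}$ and the bound $\underline{C}\geq c$, and sums over pieces to obtain $\hat{L}\geq c\,L_{h_{t_0}}(\beta)\geq c\,d_{t_0}(x,y)$; your attention to endpoint regularity just makes explicit what the paper uses implicitly when it writes each piece as $t\mapsto(t,\sigma_i(t))$.
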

\begin{proof}
    Consider any piecewise causal curve \(\beta = \beta_1 \dots \beta_k\) between the two points and orient each piece \(\beta_i\) so that it is future directed. We can assume then that \(\beta_i \colon [t_1^i, t_2^i] \to N\) with \(t_1^i < t_2^i\) is of the form \(\beta_i(t) =(t,\sigma_i(t))\). We have that
    \[
        \begin{split}
            \hat{L}(\beta) &= \sum_{i=1}^{k} t_2^i -t_1^i\\
                           &\geq \sum_{i=1}^k \int_{t_1^i}^{t_2^i} \sqrt{h_{t}(\dot{\sigma_i}(t) , \dot{\sigma_i}(t))} \, dt\\
                           & \geq \sum_{i=1}^k \int_{t_1^i}^{t_2^i} \underline{C}(t) \sqrt{h_{t_0}(\dot{\sigma_i}(t) , \dot{\sigma_i}(t))} \, dt\\
                           & \geq c\sum_{i=1}^k \int_{t_1^i}^{t_2^i} \sqrt{h_{t_0}(\dot{\sigma_i}(t) , \dot{\sigma_i}(t))} \, dt = c \cdot L_{t_0}(\sigma),
        \end{split}
    \]
    where \(\sigma\) is just the projection of the curve \(\beta\) on the factor \(M\), given by the concatenation of all the curves \(\sigma_i\) with appropriate orientation. Notice that \(\sigma\) joins \(x\) to \(y\) and has therefore length greater or equal to \(d_{t_0}(x,y)\). Hence we found that
    \[
        \hat{L}(\beta) \geq c \cdot d_{t_0}(x,y),
    \]
    thus taking the infimum over all piecewise causal curves joining \((t,x)\) to \((s,y)\) yields
    \[
        \hat{d}_g((t,x),(s,y)) \geq c \cdot d_{t_0}(x,y),
    \]
    concluding the proof.
\end{proof}
\begin{corollary}\label{corollary:lip_embedding_below}
    Let $N=(0,\tau_{\max})\times M$ be a cosmological spacetime with compact slices satisfying assumption~\eqref{eq:A} and fix \(t_0 \in [0,\tau_{\max}]\). For any pair of points \((t,x),(s,y) \in N\) we have that
    \[
        \hat{d}_g((t,x),(s,y)) \geq \min\left( \frac{1}{2}, \frac{c}{2} \right) (d_{t_0}(x,y) + \abs{s-t}).
    \]
\end{corollary}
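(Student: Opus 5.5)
We will combine two lower bounds that are already at hand and average them. The first is the trivial bound coming from the definition of the null distance. Along any piecewise causal curve \(\beta\) from \((t,x)\) to \((s,y)\), the null length \(\hat{L}(\beta)\) is a sum of absolute values of time increments, and by the triangle inequality this sum is at least \(\abs{\tau(s,y)-\tau(t,x)}=\abs{s-t}\). Taking the infimum over all such curves gives
\[
\d_g((t,x),(s,y))\geq \abs{s-t}.
\]
The second is the bound of the preceding proposation, \(\d_g((t,x),(s,y))\geq c\, d_{t_0}(x,y)\), where \(c=\min_{[0,\tau_{\max}]}\underline{C}(t)>0\). The minimum is positive because \(\underline{C}\) is continuous and positive on the compact interval \([0,\tau_{\max}]\); this is where assumption \eqref{eq:A} enters, through \Cref{lemma:riemannian_metrics_are_lipschitz_equivalent}.

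Adding the two inequalities and dividing by two yields
\[
\d_g\geq \tfrac12\abs{s-t}+\tfrac{c}{2}\,d_{t_0}(x,y)\geq \min\left(\tfrac12,\tfrac c2\right)\bigl(\abs{s-t}+d_{t_0}(x,y)\bigr),
\]
which is the claim.

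There is no real obstacle in this argument. The only point to verify is that the first bound holds for every piecewise causal curve, including those whose segments change time orientation. This is immediate, since \(\hat{L}\) sums \(\abs{\tau(\alpha_i)-\tau(\alpha_{i-1})}\), and that sum dominates the net change in \(\tau\) between the endpoints.
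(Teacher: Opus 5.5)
Your proposal is correct and follows essentially the same route as the paper: it combines the trivial bound $\d_g\geq\abs{s-t}$ with the preceding proposition's bound $\d_g\geq c\,d_{t_0}(x,y)$, sums them, halves, and takes $\min(1/2,c/2)$. Your remark that $c>0$ by continuity and positivity of $\underline{C}$ on $[0,\tau_{\max}]$ under assumption \eqref{eq:A} is a correct detail that the paper leaves implicit.
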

\begin{proof}
    The previous proof showed that \(\hat{d}_g((t,x),(s,y)) \geq c \cdot d_{t_0}(x,y)\). On the other hand, by the properties of the null distance, we know that
    \(\hat{d}_g((t,x),(s,y)) \geq \abs{s-t}\). Summing these two inequalities and dividing by 2 yields
    \[
        \hat{d}_g((t,x),(s,y)) \geq \frac{1}{2}\abs{s-t} + \frac{c}{2} \cdot d_{t_0}(x,y).
    \]
    Taking the minimum value between \(1/2\) and \(c/2\) we can estimate from below and prove the claim.
\end{proof}

We now have all the needed tools to prove \Cref{thm:compactness_nullcompletion}:
\begin{proof}[Proof of {\Cref{thm:compactness_nullcompletion}}]
    Consider the map
    \[
        \xymatrix@R=2pt@C=1pt{
            i\colon & (N,\hat{d}_g) \ar[rr] && ([0,\tau_{\max}] \times M, d)\\
            & (t,x) \ar@{|->}[rr] && (t,x)
        }
    \]
    Then \Cref{corollary:lip_embedding_above} and \Cref{corollary:lip_embedding_below} ensure that this map is a bi-Lipschitz embedding; in particular, the map \(i\) extends to a bi-Lipschitz map \(j \colon (\bar{N}, \d_g) \to ([0,\tau_{\max}] \times M,d)\). Notice that the space \(([0,\tau_{\max}] \times M, d)\) is compact as \(d\) induces the product topology and that \(j\) is an homeomorphism onto its image with the property that \(j(\bar{N})\) is closed. This implies that \(j(\bar{N})\) is compact and therefore so is \(\bar{N}\).

    To conclude the proof, we notice that \(i(N)=N\) is dense in \(([0,\tau_{\max}] \times M,d)\) and recalling that \(j\) extends \(i\) to the completion \(\bar{N}\) in such a way that \(j(\bar{N})\) is closed we see that 
    \[
        [0,\tau_{\max}] \times M = \operatorname{cl}_d(N) \subset \operatorname{cl}_d(j(\bar{N})) = j(\bar{N}).
    \]
    Hence \(j\) is surjective and thus a bi-Lipschitz homeomorphism, which proves the second claim.
\end{proof}

As a consequence of \Cref{lemma:regular_cosmotime_when_compact_slices}, and \Cref{thm:compactness_nullcompletion}, we have that cosmological spacetimes with compact slices and satisfying assumption~\eqref{eq:A}, are causally-null compactifiable.

\section{Monotonic convergence of null distances} \label{sect:monotonic_convergence}

We consider a sequence of cosmological spacetimes as in \Cref{def:generalized_product} having the same base space, that is, we consider the manifold \(N =(0,\tau_{\max}) \times M\) and a sequence of Lorentzian metrics defined as
\[
    g_j = -dt^2 +h_j(t,x),
\]
where each tensor \(h_j(t,x)\) satisfies the assumptions described in \Cref{def:generalized_product}. We will suppress the dependency on \(t,x\) to ease the notation.

Our main goal is to give some conditions on the metric tensors \(h_j\) in order to ensure that the induced null distances are monotonically increasing in \(j\) and converging to a distance. We then show that the sequence of Lorentzian manifolds we are considering actually converges in the future developed sense (cf. \Cref{def:fd_gh_convergence}) to the limit space. This is analogous to what is done in \cite{perales_monotone_2025} for the Riemannian case, where the authors study sequences of Riemannian manifolds whose distances are monotonically converging to a limit distance and show that the convergence, under compactness assumptions, holds both in the uniform and Gromov-Hausdorff sense.

\subsection{Future developed convergence and proof of \texorpdfstring{\Cref{thm:future_developed_convergence}}{FD-convergence}}

We show that a generic spacetime with regular cosmological time and its completion always satisfy the initial data property up to replacing the minimum in the definition with an infimum:
\begin{lemma}\label{lem:initial_data_property}
    Let $(N,g)$ be a spacetime with regular cosmological time. Consider its associated timed-metric space $(\bar{N},\d_g,\tau_g)$ and call $M=\tau_g^{-1}(0)$ its \emph{initial level set}. Then we have 
    \[
        \tau_g(p)= \inf \bigl\{ \d_g(p,q)\mid q\in M\bigr\},\qquad \forall p\in \bar{N}.
    \]
    Moreover, for every point \(p \in N\), the infimum is actually attained. If in addition $M$ is compact, the same is true for all $p\in\bar{N}$.  
\end{lemma}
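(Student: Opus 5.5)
Both inequalities come from the two basic properties of the null distance: it dominates the time difference, and it realizes the time difference along causal curves. The plan is to prove the identity for $p\in N$, then pass to $\bar N$ by continuity.

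\emph{Lower bound.} For $p,q\in N$ we have $\d_g(p,q)\geq\abs{\tau_g(p)-\tau_g(q)}$. Both $\d_g$ and $\tau_g$ extend continuously to $\bar N$: $\tau_g$ is $1$-Lipschitz with respect to $\d_g$, as the same inequality shows. Hence the inequality persists on $\bar N$. Taking $q\in M$, where $\tau_g(q)=0$, gives $\d_g(p,q)\geq\tau_g(p)$ for all $p\in\bar N$. Therefore $\tau_g(p)\leq\inf_{q\in M}\d_g(p,q)$.

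\emph{Upper bound and attainment for $p\in N$.} Fix $p\in N$ and choose a past-directed, past-inextendible causal curve $\gamma\colon[0,\infty)\to N$ with $\gamma(0)=p$; for instance, an integral curve of $-\nabla\tau_g$ or of any past-directed timelike vector field. Since $\tau_g$ is regular, $\tau_g(\gamma(s))\to0$ as $s\to\infty$. Any two points on $\gamma$ are causally related, so
\[
\d_g(\gamma(s),\gamma(s'))=\abs{\tau_g(\gamma(s))-\tau_g(\gamma(s'))}.
\]
Because $\tau_g(\gamma(s))$ is monotone and convergent, $(\gamma(s))$ is $\d_g$-Cauchy as $s\to\infty$. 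It therefore converges to some $q\in\bar N$. By continuity of the extended $\tau_g$, we get $\tau_g(q)=0$, so $q\in M$. Likewise
\[
\d_g(p,q)=\lim_{s\to\infty}\d_g(p,\gamma(s))=\lim_{s\to\infty}\bigl(\tau_g(p)-\tau_g(\gamma(s))\bigr)=\tau_g(p).
\]
This gives the equality, with the infimum attained at $q$. The step that needs the most care is this one: the existence of the past-inextendible causal curve, and the use of regularity to obtain the Cauchy property. Both are immediate once noted.

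\emph{General $p\in\bar N$.} Take $p_n\in N$ with $p_n\to p$. The function $F(x)=\inf_{q\in M}\d_g(x,q)$ is $1$-Lipschitz, and $\tau_g$ is $1$-Lipschitz. They agree on the dense set $N$, so they agree on $\bar N$. If $M$ is compact, $q\mapsto\d_g(p,q)$ is continuous on $M$ and attains its minimum, so the infimum is attained for every $p\in\bar N$. Here $M$ is nonempty by the construction above.
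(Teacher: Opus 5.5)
Your proof is correct, and it has the same skeleton as the paper's proof. The lower bound comes from $\hat d_g(p,q)\geq\abs{\tau_g(p)-\tau_g(q)}$ extended to $\bar N$. The upper bound for $p\in N$ comes from running a past-directed causal curve from $p$ down to $\tau_g^{-1}(0)$, using that $\hat d_g$ equals the time difference along it. Density and compactness then finish the argument.

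The one real difference is the choice of curve. The paper takes a generator $\gamma_p$ from \cite[Theorem~1.2]{andersson_cosmological_1998}, a timelike ray with $\tau_g(\gamma_p(s))=s$. You instead take an arbitrary past-inextendible causal curve and apply the definition of regularity directly. Your route is more elementary, since it needs no existence theory for generators. It also shows a little more: the limit point of \emph{every} past-inextendible causal curve from $p$ realizes $\tau_g(p)$. The paper's choice only buys a curve parametrized by $\tau_g$, which is not needed here.

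Your handling of $p\in\bar N$ (two $1$-Lipschitz functions agreeing on the dense set $N$) is shorter than the paper's explicit sequence arguments and equivalent to them. The same holds for the compact case, where a continuous function on the compact set $M$ attains its minimum.

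One small correction: drop the suggestion of integral curves of $-\nabla\tau_g$. A regular cosmological time is in general only locally Lipschitz, with gradient defined almost everywhere, so such integral curves need not exist. A smooth past-directed timelike vector field does the job. Its maximal integral curve through $p$ is past-inextendible for two reasons: maximality rules out convergence at a finite parameter value, and a nowhere-vanishing field has no zero at which the curve could converge as the parameter tends to infinity.
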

\begin{proof}
    First of all, take $p\in \bar{N}$, $q\in M$ and consider sequences $\{p_n\},\{q_n\}$ in $N$ converging, respectively, to $p$ and $q$. By the properties of the null distance, $\abs{\tau_g(p_n)-\tau_g(q_n)} \leq \d_g(p_n,q_n)$. Taking limits in both sides we get $\tau_g(p)\leq \d_g(p,q)$, therefore $\tau_g(p)\leq \inf\{ \d_g(p,q) : q\in M \}$.

    Now take a point $p\in N$ and consider one \textit{generator} $\gamma_p$ as in \cite[Theorem~1.2]{andersson_cosmological_1998}, i.e., a future directed timelike curve $\gamma_p\colon (0,\tau_g(p)]\to N$ such that $\gamma_p(\tau_g(p))=p$ and $\tau_g(\gamma_p(s))=s$, for every $s\in (0,\tau_g(p)]$. Consider the extension of such a generator to $M$ (which exists because $\gamma_p$ is $1$-Lipschitz with respect to $\d_g$). Its initial point is $q\coloneqq\gamma_p(0)\in M$. Now consider a sequence $\{q_n\}$ of points in the image of $\gamma_p$ such that $\tau_g(q_n)\to 0$ as $n\to \infty$. One has 
    \[
        \d_g(p,q)=\lim_{n\to\infty} \d_g(p,q_n)= \lim_{n\to\infty} \bigl( \tau_g(p)-\tau_g(q_n) \bigr) =\tau_g(p),
    \]
    which proves the other inequality for every \(p \in N\), with the infimum being attained by \(q\). If \(p \in \bar{N}\), we consider a sequence of points \(\{p_n\}\) in \(N\) converging to \(p\). For each $p_n$, we know there is a point \(q_n \in M\) which satisfies \(\d_g(p_n,q_n) = \tau_g(p_n)\). We therefore have
    \[
        \inf \bigl\{ \d_g(p,q)\mid q\in M\bigr\} \leq \d_g(p,q_n) \leq \d_g(p,p_n) + \d_g(p_n,q_n) = \d_g(p,p_n) + \tau_g(p_n).
    \]
    Taking the limit as \(n \to +\infty\) we see that the right hand side converges to \(\tau_g(p)\), thus proving the missing inequality for all points \(p \in \bar{N}\).
    
    Finally, assume that $M$ is compact and consider $p\in \bar{N}$. Take a sequence $\{p_n\}$ in $N$ converging to $p$. For each $n\in\N$ we have, by the previous step, that $\tau_g(p_n)=\d_g(p_n,r_n)$ for some $r_n\in M$. By the compactness of $M$, the sequence $r_n$ admits a converging subsequence, which we denote the same for simplicity, to some $q\in M$. Therefore, 
    \[
        \tau_g(p)= \lim_{n \to \infty} \tau_g(p_n)=\lim_{n \to \infty} \d_g(p_n,r_n) = \d_g(p,q).
    \]    
    So for every $p\in \bar{N}$ we have that $\tau_g(p)\geq \inf\{ \d_g(p,q) : q\in M \}$, and the opposite inequality was already proved. Moreover, in both cases the infimum is achieved, so it is a minimum.    
\end{proof}

As an immediate corollary, we deduce that every causally-null compactifiable spacetime and every cosmological spacetime with compact slices satisfying assumption~\eqref{eq:A} has the distance from initial data property as in \Cref{def:future-developed}. Equivalently, the timed metric space \((\bar{N}, \d_g, \tau_g)\) is future developed according to \Cref{def:timed_metric_space_future_developed}. Moreover, it is homeomorphic to the topological product \([0,\tau_{\max}] \times M\). Now, we prove future developed convergence (cf.~\Cref{def:fd_gh_convergence}) when considering a sequence of Lorentzian metrics on the same space which induce an increasing sequence of null distances.

In the setting of \cite[Theorem~3.2]{perales_monotone_2025} one actually gets uniform and Gromov-Hausdorff convergence of the sequence to the limit space. To apply such a result to our sequence of cosmological spacetimes, we will show that a sufficient condition for the null distances to be increasing is given by having the sequence of Riemannian tensors on the spatial slices increase with \(j \in \N\). To get convergence to a compact metric space, we moreover assume  a uniform tensor bound on the Riemannian metrics.

\begin{theorem}\label{thm:uniform_convergence}
    For each $j\in \N$, let $N=(0,\tau_{\max})\times M$ be a cosmological spacetime with compact slices and Lorentzian metric $g_j=-dt^2+h_j$. Denote by $h_{j,t}$ the restriction of $h_j$ to the level sets of~$t$. Assume that the sequence satisfies assumption \eqref{eq:B}. Then the sequence of null distances $\d_j$ in the metric completions $(\bar{N}_j,\d_j)$ converges pointwise to a distance $d_\infty$. Moreover, $(\bar{N},d_\infty)$ is compact, so the convergence is uniform.
\end{theorem}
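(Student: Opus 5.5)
We identify every completion $\bar N_j$ with the common set $[0,\tau_{\max}]\times M$ via \Cref{thm:compactness_nullcompletion}. This is legitimate because each $g_j$ satisfies \eqref{eq:A}. On this common set we aim to apply \cite[Theorem~3.2]{perales_monotone_2025}, which requires three things: monotonicity $\d_j\le\d_{j+1}$, a uniform diameter bound, and compactness of $(\bar N,d_\infty)$.

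\textbf{Step 1: monotonicity.} If $h_{j,t}\le h_{j+1,t}$, then every $g_{j+1}$-causal vector is $g_j$-causal. Indeed, $-\dot t^2+h_{j,t}(\dot\sigma,\dot\sigma)\le -\dot t^2+h_{j+1,t}(\dot\sigma,\dot\sigma)\le 0$. The time function $\tau=t$ is the same for all $j$. Hence every piecewise $g_{j+1}$-causal curve is piecewise $g_j$-causal, with the same null length. Taking infima gives $\d_j\le \d_{j+1}$ on $N$, and by density and continuity this extends to the completion.

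\textbf{Step 2: uniform upper bound.} Since $h_{j,t}\le\tilde h$ for all $t$, every $\tilde g=-dt^2+\tilde h$-causal curve is $g_j$-causal. The argument of Step 1 then gives $\d_j\le \d_{\tilde g}$. The product $\tilde g$ trivially satisfies \eqref{eq:A}, so \Cref{corollary:lip_embedding_above} yields $\d_j\le \d_{\tilde g}\le \abs{t-s}+d_{\tilde h}(x,y)$. This bounds the diameters uniformly by $\tau_{\max}+\diam_{\tilde h}(M)$. The increasing sequence $\d_j$ is therefore bounded, so it converges pointwise to some $d_\infty$. The limit $d_\infty$ is symmetric, satisfies the triangle inequality, and is definite since $d_\infty\ge \d_1$ and $\d_1$ is a distance.

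\textbf{Step 3: compactness.} This is the step needing care, and I would handle it by a two-sided comparison with the taxi distance $d$ built from $h_{1,t_0}$. From below, $d_\infty\ge\d_1\ge \min(1/2,c_1/2)\,d$ by \Cref{corollary:lip_embedding_below} applied to $g_1$. From above, $d_\infty\le \d_{\tilde g}\le \max(1,\tilde C)(\abs{t-s}+d_{\tilde h}(x,y))$. Since any two Riemannian metrics on the compact $M$ are bi-Lipschitz equivalent, $d_{\tilde h}$ is controlled by a constant multiple of $d_{1,t_0}$. Thus $d_\infty$ is bi-Lipschitz to $d$ on $[0,\tau_{\max}]\times M$, which is compact for $d$, so $([0,\tau_{\max}]\times M,d_\infty)$ is compact. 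Then \cite[Theorem~3.2]{perales_monotone_2025} gives uniform convergence and Gromov–Hausdorff convergence.

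A subtle point is that the completions $\bar N_j$ must be regarded as the same set for all $j$, with the extensions of $\d_j$ to the boundary slices still monotone. This holds because every $\d_j$ is bi-Lipschitz to the same taxi distance $d$, so Cauchy sequences coincide for all $j$. The monotone inequalities then pass to the limit along these common Cauchy sequences.
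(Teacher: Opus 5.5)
Your proposal is correct and follows essentially the same route as the paper. Cone inclusion gives $\d_1\le\d_j\le d_\infty\le\d_{\tilde g}$, the uniform diameter bound $\tau_{\max}+\diam_{\tilde h}(M)$ yields a pointwise limit distance, compactness comes from the sandwich between $\d_1$ and $\d_{\tilde g}$ via \Cref{thm:compactness_nullcompletion}, and then \cite[Theorem~3.2]{perales_monotone_2025} gives uniform convergence. The only differences are cosmetic: you get definiteness directly from $d_\infty\ge\d_1$ rather than citing \cite[Lemma~2.5]{perales_monotone_2025}, and you phrase compactness as an explicit bi-Lipschitz comparison with the taxi distance, where the paper notes that a metric squeezed between two metrics inducing the product topology induces it as well.
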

\begin{proof}
    The sequence of tensors $h_{j,t}\coloneqq h_j|_{M_t}$ being increasing with $j$ implies that $j$-causal vectors are ($j-1$)-causal, i.e., the causal cones for $g_j$ are contained in the causal cones for $g_{j-1}$. As a consequence, the sequence $\d_j$ of corresponding null distances is also increasing (see \cite[Lemma~4.15]{allen_properties_2022}). Similarly, the null distance $\d_{\tilde{g}}$ induced by $\smash{\tilde{g}\coloneqq -dt^2+\tilde{h}}$ is greater than each of the $\d_j$'s. As a consequence, $\d_j$ converges pointwise to some function $d_\infty$ and $\d_j\leq d_\infty\leq \d_{\tilde{g}}$. The fact that $d_\infty$ is a distance can be shown using the fact that the $\d_j$'s are distances and that the diameters $\diam_{\d_j}(N)$ are uniformly bounded \cite[Lemma~2.5]{perales_monotone_2025}. Indeed, $\diam_{\d_j}(N)\leq \tau_{\max}+\diam_{d_{\tilde{h}}}(M)$, which is finite.

    Now, \Cref{thm:compactness_nullcompletion} ensures that the completions of $(N,\d_{\tilde{g}})$ and $(N,\d_{1})$ are both homeomorphic to $[0,\tau_{\max}]\times M$ with the product topology. As $\d_1\leq d_\infty\leq \d_{\tilde{g}}$, the topology induced by $d_\infty$ is equivalent to the product topology in $[0,\tau_{\max}]\times M$ and, in particular, $(\bar{N},d_\infty)$ is compact. By \cite[Theorem~3.2]{perales_monotone_2025} we then have uniform convergence.
\end{proof}

One can extend the previous result by proving that the sequence converges in the future developed sense to the final space, which we do in the following proposition:

\begin{proposition}\label{prop:metricPairConv}
    Let $N_j$ be a sequence of cosmological spacetimes as in \Cref{thm:uniform_convergence}. Consider the sequence of associated metric spaces \((\bar{N}_j, \d_j)\) along with their initial level sets \(M = t^{-1}(0)\). Then the sequence \((\bar{N}_j, \d_j, M)\) converges in the metric pair sense to the metric pair \((N_\infty, d_\infty, M)\).
\end{proposition}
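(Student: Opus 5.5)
The plan is to use the embedding of \Cref{prop:embedding_taxi} for both the whole spaces and the initial sets at once. By \Cref{thm:compactness_nullcompletion} all completions $\bar N_j$, as well as the limit space, are identified with the same underlying set $X=[0,\tau_{\max}]\times M$: the identity maps are bi-Lipschitz with respect to the taxi distance for every $j$ and for $\d_{\tilde g}$. Since $\d_1\le d_\infty\le \d_{\tilde g}$, the identity is also a homeomorphism for $d_\infty$. On this common set the initial level set is $A\coloneqq\{0\}\times M$ for every $j$ and for the limit, because the time function is the first coordinate for all of them. By \Cref{thm:uniform_convergence}, $\d_j\to d_\infty$ uniformly on $N$. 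Since both sides are continuous, they extend to $X$ and the uniform convergence still holds on $X$. Also $\d_j\le d_\infty$ by monotonicity.

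Fix $j$ and set $\varepsilon_j\coloneqq\sup_{X\times X}(d_\infty-\d_j)$, which tends to $0$. Then $d_\infty-\varepsilon_j\le \d_j\le d_\infty$, so \Cref{prop:embedding_taxi} with $d_a=\d_j$ and $d_b=d_\infty$ provides a metric space $Z_j=X\times[0,\varepsilon_j/2]$ and distance-preserving maps
\[
\phi_j=f_a\colon(\bar N_j,\d_j)\to Z_j,\qquad \psi_j=f_b\colon(X,d_\infty)\to Z_j.
\]
If $\varepsilon_j=0$, the two spaces coincide and there is nothing to prove.

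The key estimate is that for each $x\in X$ the points $f_a(x)=(x,\varepsilon_j/2)$ and $f_b(x)=(x,0)$ satisfy
\[
d_Z\bigl(f_a(x),f_b(x)\bigr)\le d_\infty(x,x)+\varepsilon_j/2=\varepsilon_j/2.
\]
It follows that $\tilde d^H(\phi_j(X),\psi_j(X))\le\varepsilon_j/2$. Applying the same pointwise bound only to $x\in A$ gives $\tilde d^H(\phi_j(A),\psi_j(A))\le\varepsilon_j/2$, because both images are parametrized by the same set $A$. The sum of the two Hausdorff distances is therefore at most $\varepsilon_j\to0$, which is exactly convergence in the metric pair sense.

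The main point needing care is the identification step. One has to check that the completions $\bar N_j$ and the limit space really share the same underlying set $X$ with the same initial set $A$, and that the extended $\d_j$ still converge uniformly to $d_\infty$ on $X$ and satisfy $\d_j\le d_\infty$ there. I would handle this by density of $N$ in $X$ together with continuity of all the distances in the product topology (guaranteed by the bi-Lipschitz bounds of \Cref{corollary:lip_embedding_above} and \Cref{corollary:lip_embedding_below}). Inequalities and suprema then pass from $N\times N$ to $X\times X$. That $A$ is closed, so that each triple is a genuine metric pair, follows from compactness of $M$.
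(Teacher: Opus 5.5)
Your proposal is correct and follows the paper's proof essentially verbatim. Like the paper, you embed both spaces into $Z=X\times[0,\varepsilon/2]$ via \Cref{prop:embedding_taxi} with $f_j(x)=(x,\varepsilon/2)$ and $f_\infty(x)=(x,0)$, then bound both Hausdorff distances (of the whole spaces and of the initial sets $\{0\}\times M$) by $\varepsilon/2$ using $d_Z(f_j(x),f_\infty(x))\le \varepsilon/2$; your extra care in extending the uniform convergence and the inequality $\hat{d}_j\le d_\infty$ from $N$ to the completion by density and continuity fills in details the paper leaves implicit.
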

\begin{proof}
    We know that we have monotonic uniform convergence of the distances \(\hat{d}_j\) to \(d_\infty\). In particular, for any \(\varepsilon>0\) we have that for \(j\) big enough it holds that
    \[
        d_\infty -\varepsilon \leq \hat{d}_j \leq d_\infty.
    \]
    We fix such a \(j \in \N\). Employing \Cref{prop:embedding_taxi} we consider the space \(Z =([0,\tau_{\max}]\times M \times [0, \varepsilon/2], d_Z)\) with isometric embeddings \(f_j\colon \bar{N}_j\to Z\) and \(f_\infty\colon N_\infty\to Z\) given, respectively, by \(f_j(x) =(x,\varepsilon/2)\) and \(f_\infty(x) = (x,0)\) where, by \Cref{thm:compactness_nullcompletion}, we identify notationally the sets $\bar{N}_j=N_\infty=[0,\tau_{\max}]\times M$. As a consequence we have
    \[
        d_Z^H(f_j(\bar{N}_j), f_\infty(N_\infty)) \leq \varepsilon/2.
    \]
    Recall that the distance \(d_Z\) is built in such a way that \(d_Z((x,s),(y,t)) \leq d_\infty(x,y) + \abs{s-t}\). In particular, we see that if we restrict the embeddings \(f_j\) and \(f_\infty\) to \((\{0\}\times M, \hat{d}_{j \vert M})\) and \((\{0\} \times M, d_{\infty \vert M})\), respectively, we get that
    \[
        d_Z(f_j(x), f_\infty(x)) \leq d_\infty(x,x) + \varepsilon/2 = \varepsilon/2 \quad \forall x \in M,
    \]
    hence
    \[
        d_Z^H(f_j(M), f_\infty(M)) \leq \varepsilon/2.
    \]
    This implies that
    \[
        d_Z^H(f_j(\bar{N}_j), f_\infty(N_\infty)) + d_Z^H(f_j(M), f_\infty(M)) \leq \varepsilon,
    \]
    therefore estimating the metric pair distance between \((\bar{N}_j, \hat{d}_j, M)\) and \((N_\infty, d_\infty, M)\) by \(\varepsilon\). Hence we get convergence of the sequence as \(j \to +\infty\).
\end{proof}

\begin{theorem}\label{thm:initial_data_property_limit}
    Let $(N,g_j)$ be a sequence of cosmological spacetimes with compact slice and fixed underlying manifold $N=(0,\tau_{\max})\times M$. Assume, moreover, that the sequence $\d_j$ of associated null distances is increasing and their extensions to the metric completions are converging uniformly to a distance $d_\infty$. Then the function \(t\) satisfies
    \[
     t(p) = \min \bigl\{ d_\infty(p,q) \mid \ q \in \{0\}\times M \bigr\},\qquad \forall p\in \bar{N}.
    \]
\end{theorem}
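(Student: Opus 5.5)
We prove the two inequalities separately and pass to the limit along the sequence, using \Cref{lem:initial_data_property} for each fixed \(j\). Throughout we identify each completion \(\bar N_j\) with \([0,\tau_{\max}]\times M\) via \Cref{thm:compactness_nullcompletion}, so that all the distances \(\d_j\) and \(d_\infty\) live on the same underlying set \(\bar N\). In this identification the extended cosmological time is the coordinate \(t\) for every \(j\), and the initial level set is \(\{0\}\times M\).

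\emph{Lower bound.} Fix \(p\in\bar N\) and \(q\in\{0\}\times M\). For each \(j\) the null distance satisfies \(\d_j(p,q)\geq \abs{t(p)-t(q)} = t(p)\). This holds on \(N\) by the basic property of null distances and extends to the completion by continuity, exactly as in the first step of \Cref{lem:initial_data_property}. Letting \(j\to\infty\) gives \(d_\infty(p,q)\geq t(p)\). Hence \(t(p)\leq \inf_q d_\infty(p,q)\).

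\emph{Upper bound and attainment.} This is the main step. By \Cref{lem:initial_data_property}, since \(M\) is compact, for each \(j\) there is \(q_j\in\{0\}\times M\) with \(\d_j(p,q_j)=t(p)\). By compactness of \(M\), a subsequence \(q_{j_k}\) converges to some \(q\in\{0\}\times M\).

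Convergence in which topology needs checking. All \(\d_j\) are bounded above by \(d_\infty\) by monotonicity, and \(d_\infty\) is a distance on the compact set \(\bar N\) (compactness is part of the setting, as in \Cref{thm:uniform_convergence}). So we extract the convergent subsequence with respect to \(d_\infty\), i.e.\ \(d_\infty(q_{j_k},q)\to0\).

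Then we estimate
\[
d_\infty(p,q)\leq d_\infty(p,q_{j_k})+d_\infty(q_{j_k},q)\leq \d_{j_k}(p,q_{j_k})+\|d_\infty-\d_{j_k}\|_\infty + d_\infty(q_{j_k},q).
\]
The first term equals \(t(p)\), the second tends to \(0\) by uniform convergence, and the third tends to \(0\) by the choice of subsequence. Therefore \(d_\infty(p,q)\leq t(p)\).

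Combined with the lower bound, this gives equality. It also shows that the infimum is attained at \(q\), so it is a minimum.

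The delicate point is making sure the limit point \(q\) is reached in \(d_\infty\) and not merely in some \(\d_j\). This needs \(\{0\}\times M\) to be compact for \(d_\infty\). If compactness of \((\bar N,d_\infty)\) is not assumed, I would first derive it: uniform convergence together with total boundedness of each \((\bar N,\d_j)\) (from \Cref{thm:compactness_nullcompletion}) gives total boundedness of \((\bar N,d_\infty)\), since an \(\varepsilon\)-net for \(\d_j\) with \(\|d_\infty-\d_j\|_\infty<\varepsilon\) is a \(2\varepsilon\)-net for \(d_\infty\). Completeness of \((\bar N,d_\infty)\) then follows because \(d_\infty\geq \d_1\) and \(\bar N\) is complete for \(\d_1\), after identifying the sets.
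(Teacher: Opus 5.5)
Your argument is the paper's own: the lower bound comes from $\d_j(p,q)\geq t(p)$ passed to the limit, which the paper phrases as $t$ being $1$-Lipschitz for $d_\infty$. The upper bound uses the minimizers $q_j$ from \Cref{lem:initial_data_property}, a $d_\infty$-convergent subsequence and uniform convergence; your triangle inequality just spells out the paper's one-line claim $\lim_j\d_j(p,q_j)=d_\infty(p,q)$.

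The only divergence is how you make $\{0\}\times M$ compact for $d_\infty$. The paper simply notes that uniform convergence forces $d_\infty$ to induce the same topology as the $\d_j$. Your total-boundedness-plus-completeness route works, but its completeness step is not justified by $d_\infty\geq\d_1$ and $\d_1$-completeness alone: these only produce a $\d_1$-limit of a $d_\infty$-Cauchy sequence. To see that this limit is also a $d_\infty$-limit you need uniform convergence once more, together with the fact that all $\d_j$ share the product topology.
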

\begin{proof}
    Uniform convergence implies that the metric $d_\infty$ induces the same topology on $\bar{N}$ as any of the~$\d_j$. By \Cref{lem:initial_data_property}, we know that
    \[
     t(p) = \min \Big\{ \d_j(p,q) \mid \ q \in \{0\}\times M \Big\},\qquad \forall p\in \bar{N}, \forall j\in \N.
    \]
    Consider $p\in N$. For each $j\in\N$, let $q_j\in \{0\}\times M$ be points attaining the minimum. Consider a $d_\infty$-converging subsequence, which we denote the same for simplicity and let $q$ be its limit. Then 
    \[
    t(p)=\d_j(p,q_j)=\lim_{j\to \infty} \d_j(p,q_j)= d_\infty(p,q),
    \]
    where the last inequality follows by uniform convergence. As a consequence, we have that $t(p)\geq \min\{d_\infty(p,r) \mid r\in \{0\}\times M\}$.
    
    On the other hand, the function \(t\) is \(1\)-Lipschitz for all metrics \(\d_j\) and thus for \(d_\infty\) as well. If we fix a point $r\in \{0\}\times M$ we have that \(t(r) = 0\) and thus
    \[
        d_\infty(p,r) \geq \abs{t(p) -t(r)} = t(p).
    \]
    Taking the infimum over all \(r \in \{0\}\times M\) yields the missing inequality.
\end{proof}
\begin{proof}[Proof of {\Cref{thm:future_developed_convergence}}]
    From \Cref{thm:uniform_convergence} we get that the sequence converges uniformly to the compact limit space \((\bar{N},d_\infty)\). Using \Cref{thm:initial_data_property_limit} we get that the limit space is indeed future developed according to \Cref{def:future-developed}. Thus, by \Cref{prop:metricPairConv}
    we get that the sequence converges in the future developed sense as in \Cref{def:fd_gh_convergence}. We can then employ \cite[Theorem~1.5]{perales_timed_hausdoff_2025} to conclude that the sequence converges in the intrinsic timed-Hausdorff sense as in \cite[Definition~4.22]{sakovich_introducing_2024}.
\end{proof}

\subsection{Causally-null limit spaces}
We now investigate the relation between the limit distance from \Cref{thm:future_developed_convergence} and the notion of {\em causally-null distance} associated to a timed metric space presented in \cite[Def. 5.1]{che_perales_2026}. We briefly recall the definition:

\begin{definition}\label{def:causally_null_distance}
    Given a timed metric space $(X,d,\tau)$, we define the relation \(\leq_{d,\tau}\) as follows:
    \[
        p \leq_{d,\tau} q \iff d(p,q) = \tau(q) - \tau(p),
    \]
    which defines a causal order (cf. \cite[Proposition~4.1]{che_perales_2026}). We then define the {\em causally-null distance} $\hat{d}_{d,\tau}$ by
    \[
        \hat{d}_{d,\tau}(p,q)=\inf \sum_{i=1}^{N} |\tau(p_i)-\tau(p_{i-1})| 
    \]
    where the infimum runs over all finite collections of points $\{p_i\}_{i=0}^{N}$, where $p_{0}\coloneqq p$ and $p_{N} \coloneqq q$, and $p_i$, $p_{i-1}$ are causally related, possibly alternating past with future:
    \[
        p_i \leq_{d,\tau} p_{i-1} \quad \text{ or } \quad p_i \geq_{d,\tau} p_{i-1}, \quad i\in\{1,\dots,N\}
    \]
    In \cite[Proposition~1.9]{che_perales_2026} it is shown that $\d_{d,\tau}$ is a pseudo-distance which is always greater than or equal to \(d\). We say that $(X,d,\tau)$ is {\em causally-null} if  $\hat{d}_{d,\tau}=d$.
\end{definition}
\subsubsection{Proof of \texorpdfstring{\Cref{thm:cosmological_spacetimes_are_causally_null}}{completions of cosmological spacetimes being causally-null}}
All spacetimes with regular cosmological time functions have associated metric spaces which are causally-null (cf. \cite[Proposition~5.6]{che_perales_2026}). However, if we consider the (compact) completion \((\bar{N},\d)\) of the associated metric space of a causally-null compactifiable spacetime \((N,g)\) and endow it with the causal relation given in \Cref{def:causally_null_distance}, it is not always true that it is causally-null (see \Cref{ex:triangle}). However, with some additional assumption of \say{causal connectivity} of the completed space we can guarantee that the completion is causally-null: 
\begin{proposition}\label{prop:causally_null_completions}
    Consider a timed metric space \((N, \d,\tau)\). Suppose that the completion \((\bar{N},\d)\) endowed with the causal relation as in \Cref{def:causally_null_distance} satisfies the following assumption: for every point \(p \in \bar{N}\), all of its neighborhoods contain a different point \(q \in N\) such that \(p \leq_{d,\tau} q\) or \(q \leq_{d,\tau} p\). Then the space is causally-null.
\end{proposition}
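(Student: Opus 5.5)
The plan is to compare chains in $N$ with chains in $\bar{N}$, and to use the accessibility hypothesis to reduce arbitrary points of $\bar{N}$ to points of $N$ at arbitrarily small extra cost. I will use that $(N,\d,\tau)$ itself is causally-null. In the intended situation, where $\d$ is the null distance of a spacetime with regular cosmological time, this holds by \cite[Proposition~5.6]{che_perales_2026}. This is an assumption I am adding: the statement does not list it explicitly, and for a completely arbitrary timed metric space it would need to be assumed. Write $\hat{d}^{N}$ and $\hat{d}^{\bar{N}}$ for the causally-null distances of $(N,\d,\tau)$ and $(\bar{N},\d,\tau)$. By \cite[Proposition~1.9]{che_perales_2026} we always have $\hat{d}^{\bar{N}}\geq \d$, so only the reverse inequality needs proof.

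\textbf{Step 1 (points of $N$).} The relation $\leq_{d,\tau}$ of \Cref{def:causally_null_distance} depends only on $\d$ and $\tau$. Since $\d$ and $\tau$ on $\bar{N}$ extend those on $N$, the relation on $\bar{N}$ restricted to $N$ is exactly the relation on $N$. Hence every admissible chain in $N$ is an admissible chain in $\bar{N}$ with the same cost $\sum_i\abs{\tau(p_i)-\tau(p_{i-1})}$. Therefore, for $p,q\in N$,
\[
\hat{d}^{\bar{N}}(p,q)\leq \hat{d}^{N}(p,q)=\d(p,q).
\]

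\textbf{Step 2 (general points).} Fix $p,q\in\bar{N}$ and $\varepsilon>0$. By the hypothesis applied to the ball $B_\varepsilon(p)$, there is $p'\in N$ with $\d(p,p')<\varepsilon$ and $p,p'$ causally related. By the definition of $\leq_{d,\tau}$ this means $\d(p,p')=\abs{\tau(p)-\tau(p')}$. So the two-point chain $\{p,p'\}$ is admissible, and
\[
\hat{d}^{\bar{N}}(p,p')\leq\abs{\tau(p)-\tau(p')}=\d(p,p')<\varepsilon.
\]
In the same way, choose $q'\in N$ with $\hat{d}^{\bar{N}}(q',q)<\varepsilon$ and $\d(q,q')<\varepsilon$.

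Concatenating chains shows that $\hat{d}^{\bar{N}}$ satisfies the triangle inequality (it is a pseudo-distance). Using this, Step 1, and the triangle inequality for $\d$, we get
\[
\hat{d}^{\bar{N}}(p,q)\leq \varepsilon+\d(p',q')+\varepsilon\leq \d(p,q)+4\varepsilon .
\]
Letting $\varepsilon\to0$ gives $\hat{d}^{\bar{N}}\leq\d$, hence equality.

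\textbf{Main obstacle.} Step 2 is routine. The delicate point is Step 1, namely the input that $(N,\d,\tau)$ is causally-null. For spacetimes this rests on the null distance encoding causality, so that every piecewise causal curve yields an admissible chain through its breakpoints. If one wanted the proposition for an arbitrary timed metric space, this would have to be added as a hypothesis or checked separately. I would state explicitly that this is where the causally-null property of $N$ enters.
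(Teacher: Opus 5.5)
Your proof is correct and follows the paper's argument. You approximate $p,q\in\bar{N}$ by $\leq_{d,\tau}$-related points $p',q'\in N$, take a near-optimal causal chain from $p'$ to $q'$ inside $N$, append $p$ and $q$ at the ends, and let $\varepsilon\to 0$, $p'\to p$, $q'\to q$; your use of the triangle inequality for $\hat{d}^{\bar{N}}$ is just this concatenation.

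The extra hypothesis you flag, that $(N,\hat{d},\tau)$ itself is causally-null, is exactly what the paper's proof uses implicitly when it says the points ``are now contained in the original spacetime''. It is genuinely needed: the limit space of \Cref{ex:AB-bis} satisfies the accessibility condition via vertical segments but is not causally-null.
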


\begin{proof}
    From \cite[Proposition~1.9]{che_perales_2026} we only need to prove the opposite inequality, i.e., that $\d\geq \d_{d,\tau}$. To this end, let \(p\) and \(q\) be two points in \(\bar{N}\). By our assumption, we can replace them by two points \(p'\) and \(q'\) which are arbitrarily close to the original points and causally related to them. As the points are now contained in the original spacetime, we know that for any \(\varepsilon >0\) there exists a chain of points \(\{q_i\}_{i=1}^m\) such that, setting \(q_0 \coloneqq p'\) and \(q_{m+1} \coloneqq q'\), we have \(q_i \leq q_{i+1}\) or \(q_{i+1} \leq q_i\) for all \(i\in\{0, \dots ,m\}\) and
    \[
        \sum_{i=0}^{m} \abs{\tau(q_i) -\tau(q_{i+1})} \leq \d(p',q') + \varepsilon.
    \]
    Since points that are causally related with respect to \(\leq\) will also be causally related with respect to \(\leq_{d,\tau}\) and we took the points \(p'\) and \(q'\) to be in relation with \(p\) and \(q\) respectively, adding the points \(p\) and \(q\) to the chain will yield a valid chain to compute the \(\d_{\d, \tau}\)-distance:
    \[
        \begin{split}
            \hat{d}_{\d,\tau}(p,q) &\leq \abs{\tau(p) - \tau(p')} + \sum_{i=0}^{m} \abs{\tau(q_i) -\tau(q_{i+1})}+ \abs{\tau(q') - \tau(q)} \\
            & \leq \abs{\tau(p) - \tau(p')} +  \d(p',q') + \varepsilon + \abs{\tau(q) - \tau(q')}.
        \end{split}
    \]
    Sending \(\varepsilon \to 0^+\) we get
    \[
        \hat{d}_{\d,\tau}(p,q) \leq \abs{\tau(p) - \tau(p')} +  \d(p',q') +  \abs{\tau(q) - \tau(q')}.
    \]
    By continuity we can then send \(p' \to p\) and \(q' \to q\) getting \(\hat{d}_{\d,\tau}(p,q) \leq \d(p,q)\), which was the missing inequality.
\end{proof}
The previous result allows us to immediately prove \Cref{thm:cosmological_spacetimes_are_causally_null}:
\begin{proof}[Proof of {\Cref{thm:cosmological_spacetimes_are_causally_null}}]
    By the previous proposition, it is sufficient to show that each point of the completion is the limit of a sequence of points in the interior which are \(\leq_{d,\tau}\)-related to it. As we know from \Cref{thm:compactness_nullcompletion}, we can identify the completion with the product space \([0,\tau_{\max}] \times M\), where the added points coincide with the set \(\{0,\tau_{\max}\} \times M\). Clearly, if a point belongs to the interior the property is true. Let us assume that the point is of the form \((0,x)\) for some \(x \in M\); the case \((\tau_{\max},x)\) is completely analogous. We can consider the sequence of points \(\{(\tau_{\max}/n, x)\}_{n \geq 2}\), which always lies in the original space and see that it clearly converges to \((0,x)\) as \(n \to +\infty\). Fixing any \(n \in \N\), we build the sequence \(\{(\tau_{\max}/kn, x)\}_{k \geq 1}\) and notice that for every \(k \geq 1\) we have
    \[
        \left(\frac{\tau_{\max}}{kn},x\right) \leq \left(\frac{\tau_{\max}}{n},x\right)  \implies \d \left( \left(\frac{\tau_{\max}}{kn},x\right) ,\left(\frac{\tau_{\max}}{n},x\right)\right) = \frac{\tau_{\max}}{n} \left(\frac{k-1}{k}\right).
    \]
    Taking the limit as \(k \to +\infty\) gives
    \[
        \d \left( \left(0,x\right), \left(\frac{\tau_{\max}}{n},x\right)\right) = \frac{\tau_{\max}}{n} \implies (0,x) \leq_{d,\tau} \left( \frac{\tau_{\max}}{n},x \right),
    \]
    which shows that the points of the sequence are \(\leq_{d,\tau}\)-related to \((0,x)\). Hence the previous proposition applies and the completion is causally-null.
\end{proof}

\subsubsection{Proof of \texorpdfstring{\Cref{thm:causally_null_limit_is_null_distance_of_limit_tensor}}{causally-null distance coinciding with the null distance of the limit tensor}}
As a first step towards proving \Cref{thm:causally_null_limit_is_null_distance_of_limit_tensor}, in the following lemma we prove that the causal relation \(\leq_{d,\tau}\) is actually the extension of the causal relation \(\leq\) of the spacetime \((0,\tau_{\max}) \times M\) to the spacetime with boundary \([0,\tau_{\max}] \times M\):
\begin{lemma}\label{lemma:causal_relation_completion_implies_causal_curves}
    Consider the metric completion \(([0,\tau_{\max}]\times M, \hat{d})\) of a cosmological spacetime with compact slices and satisfying assumption \eqref{eq:A}. Then, for any two points \(p=(t_p,x_p), q=(t_q,x_q)\) we have that 
    \[
        \hat{d}(p,q) = \tau(q) - \tau(p) \iff p=q \ \text{or} \ \exists  \ \gamma \ \text{f.d. causal curve from} \ p \ \text{to} \ q,
    \]
    where by causal curve we mean a $C^0$ curve which is causal on the interior of its domain.
\end{lemma}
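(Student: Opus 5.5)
The plan is to treat the two implications separately. In both, the key tool is that on the open spacetime $N=(0,\tau_{\max})\times M$ the null distance encodes causality: by \Cref{lemma:regular_cosmotime_when_compact_slices} and \Cref{prop:encoding_causality}, for $p,q\in N$ we have $\d(p,q)=\tau(q)-\tau(p)$ iff $p\leq q$. We combine this with the identification of the completion in \Cref{thm:compactness_nullcompletion}. Throughout, a future directed causal curve from $p$ to $q$ is written as $\gamma(t)=(t,\sigma(t))$ for $t\in[t_p,t_q]$, with $\sigma$ continuous and locally Lipschitz in the interior satisfying $\sqrt{h_t(\dot\sigma,\dot\sigma)}\le 1$ a.e.

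\textbf{($\Leftarrow$).} If $p=q$ there is nothing to prove. Otherwise take a $C^0$ causal curve $\gamma$ from $p$ to $q$ and points $p_n=\gamma(t_p+1/n)$, $q_n=\gamma(t_q-1/n)$ in $N$. The restriction of $\gamma$ between $p_n$ and $q_n$ is a causal curve in $N$, so $p_n\le q_n$ and hence $\d(p_n,q_n)=t_q-t_p-2/n$. The curve $\gamma$ is continuous into $[0,\tau_{\max}]\times M$ with the taxi metric, so by bi-Lipschitz equivalence (\Cref{thm:compactness_nullcompletion}) $p_n\to p$ and $q_n\to q$ in $\d$. Passing to the limit gives $\d(p,q)=\tau(q)-\tau(p)$. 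One small point needs checking: causal curves in the interior are only locally Lipschitz rather than piecewise smooth. Since the causal relation of a smooth spacetime is the same for Lipschitz and smooth causal curves, this is standard.

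\textbf{($\Rightarrow$).} Suppose $\d(p,q)=t_q-t_p$ and $p\ne q$, so that $t_q>t_p$ (if $t_q=t_p$, definiteness forces $p=q$). Choose approximating points $p_n\to p$ and $q_n\to q$ in $N$. The defect
\[
\delta_n\coloneqq \d(p_n,q_n)-(t_{q_n}-t_{p_n})
\]
tends to $0$, but $p_n$ and $q_n$ need not be causally related, so no causal curve is available directly. There are two ways to handle this.
\begin{itemize}
\item[(a)] Shift the points in time: replace $p_n$ by $p_n'=(t_{p_n}-\eta_n,x_{p_n})$ and $q_n$ by $(t_{q_n}+\eta_n,x_{q_n})$. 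One then tries to show that these are causally related for a suitable $\eta_n\to0$, using that an almost-minimizing piecewise causal chain spends total backward time at most $\delta_n/2$ and that its spatial excursions during backward pieces can be absorbed by the extra time. This is delicate.
\item[(b)] Avoid the difficulty by working with almost-minimizing piecewise causal curves $\beta_n$ from $p_n$ to $q_n$. Their future directed pieces have total time length $(t_q-t_p)+O(\delta_n)$ and their past directed pieces have total length $O(\delta_n)$. After removing the past pieces, the projections to $M$ are still $C$-Lipschitz in the time parameter, with $C=\max\overline{C}/\underline{C}$ relative to $h_{t_0}$ (\Cref{lemma:riemannian_metrics_are_lipschitz_equivalent}).
\end{itemize}
I prefer (b). Parametrize $\beta_n$ by cumulative null length and apply Arzelà–Ascoli in the compact space $[0,\tau_{\max}]\times M$ to get a uniformly convergent subsequence with limit $\gamma(s)=(t(s),\sigma(s))$. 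In the limit the time coordinate becomes $t(s)=t_p+s$, because the backward variation vanishes. The spatial part satisfies $L_{h_{t}}(\sigma|_{[a,b]})\le b-a$ by lower semicontinuity of length under uniform convergence. This lower semicontinuity must be checked with the continuously varying metrics $h_t$; it can be done by comparing with the frozen metric $h_{t_*}$ on short time intervals, using continuity of $\overline{C}$ and $\underline{C}$ and assumption \eqref{eq:A} at the endpoints.

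Finally, a curve with $t(s)=t_p+s$ and $\sigma$ $1$-Lipschitz for $h_t$ is a future directed causal curve in the interior, and it runs from $p$ to $q$ continuously up to the boundary.

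I expect the main obstacle to be the limiting step in ($\Rightarrow$). One must show that the curves $\beta_n$, which have small backward segments, converge to a genuinely causal curve. The backward pieces have to be controlled uniformly, and causality must be verified for a merely Lipschitz limit under time-dependent continuous metrics that are only continuous at $t=0,\tau_{\max}$.
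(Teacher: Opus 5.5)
Your proposal is correct, and route (b) is a genuinely different argument from the paper's. The paper stays inside the metric structure of the completion. It takes the vertically shifted points $p_\varepsilon=(t_p+\varepsilon,x_p)$ and $q_\varepsilon=(t_q-\varepsilon,x_q)$, which are joined by almost-minimizing piecewise causal curves whose $L_{\hat d}$-length equals their null length. Via Arzel\`a--Ascoli it extracts a $\hat d$-minimizing Lipschitz curve $\gamma$ from $p$ to $q$. The equality case of the chain $t_q-t_p=L_{\hat d}(\gamma)\geq\cdots\geq t_q-t_p$ then forces $\tau(\gamma(s))=t_p+L_{\hat d}(\gamma|_{[0,s]})$ and $\hat d(\gamma(t),\gamma(s))=\tau(\gamma(s))-\tau(\gamma(t))$. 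After an arclength reparametrization the interior of $\gamma$ lies in $N$, and causality encoding gives $\gamma(t)\leq\gamma(s)$. A cited lemma of Kunzinger--S\"amann converts this monotonicity into a.e.\ causality, and a final limit-of-geodesics step provides smoothness.

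You instead use the splitting $g=-dt^2+h_t$ directly:
\begin{itemize}
\item parametrizing by cumulative null length makes the $\beta_n$ equi-Lipschitz in the taxi metric;
\item the bound $2P_n\leq \delta_n+o(1)$ on the backward variation yields $t(s)=t_p+s$, so the interior of the limit lies in $N$ with no reparametrization;
\item causality of the limit follows from lower semicontinuity of length for a frozen metric $h_{t_*}$, together with the uniform continuity of $h$ on $[0,\tau_{\max}]\times M$ granted by \eqref{eq:A}.
\end{itemize}
Your argument is self-contained and avoids both causality encoding and the external lemma, but it is tied to the explicit product form of the metric. The paper's argument uses only the length structure of $(\bar N,\hat d)$, compactness and causality encoding in $N$.

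Two small remarks. First, ``after removing the past pieces'' is unnecessary, and removing them would break continuity. The cumulative-null-length parametrization already handles those pieces, with spatial Lipschitz constant $1/\min\underline{C}$ relative to $h_{t_0}$. Second, your limit is only a Lipschitz, a.e.\ causal curve. If ``causal'' is read as piecewise smooth in the interior, then in ($\Rightarrow$) you also need the standard equivalence of the Lipschitz and smooth causal relations, plus a concatenation of smooth causal pieces between points $\gamma(s_k)$ accumulating at $p$ and $q$. Continuity at the endpoints follows from your spatial speed bound, and this step plays the role of the paper's final geodesic argument.
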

\begin{proof}
    The implication \say{\( \impliedby\)} is immediate: if \(p\) and \(q\) are related by a future directed causal curve \(\gamma\), then the curve is contained in the interior of the manifold except at most for its endpoints. For any two points \(\gamma(t)\) and \(\gamma(s)\) along the curve with \(t < s\) we know that \(\hat{d}(\gamma(t),\gamma(s)) = \tau(\gamma(s)) - \tau(\gamma(t))\) and thus letting the points approach the endpoints we get the conclusion.

    Conversely, suppose that \(\hat{d}(p,q) = t_q - t_p\). Notice that in particular this implies that \(0\leq t_p \leq t_q \leq \tau_{\max}\) and  moreover $t_p=t_q$ if and only if \(p = q\). We therefore assume that \(t_p < t_q\). We consider the pair of points \(p_\varepsilon = (t_p+\varepsilon,x_p)\) and \(q_\varepsilon = (t_q - \varepsilon, x_q)\) with \(\varepsilon \in (0, (t_p+t_q)/2)\) and notice that we have
    \[
        \d (p,p_\varepsilon) = \tau(p_\varepsilon) - \tau(p) = \varepsilon = \tau(q) - \tau(q_\varepsilon) = \d (q,q_\varepsilon),
    \]
    as shown in \Cref{thm:cosmological_spacetimes_are_causally_null}. Clearly then \(p_\varepsilon\) and \(q_\varepsilon\) belong to the original spacetime \((0,\tau_{\max}) \times M\), which is causally-null. This means that we can find a piecewise causal curve \(\tilde{\gamma}_\varepsilon\) joining the points \(p_\varepsilon\) and \(q_\varepsilon\) such that, on denoting by \(\{p_i^\varepsilon\}_i\) its breakpoints, we have that
    \[
        \sum_i \abs{\tau(p_i^\varepsilon) -\tau(p_{i+1}^\varepsilon)} \leq \d(p_\varepsilon, q_\varepsilon) + \varepsilon.
    \]
    On the other hand, as the curve is piecewise causal, the left hand side coincides with the \(L_{\d}\)-length of the curve \(\tilde{\gamma}_\varepsilon\). By adding the two vertical segments joining \(p\) to \(p_\varepsilon\) and \(q_\varepsilon\) to \(q\) we get a curve \(\gamma_\varepsilon\) such that
    \[
        L_{\d}(\gamma_\varepsilon) = 2\varepsilon + L_{\d}(\tilde{\gamma}_\varepsilon) \leq 3\varepsilon  + \d(p_\varepsilon,q_\varepsilon).
    \]
    As \(\varepsilon \to 0\), the term \(\d(p_\varepsilon,q_\varepsilon)\) converges to \(\d(p,q)\). In particular the sequence of curves has uniformly bounded \(L_\d\) length. Up to reparametrization we can then assume that the curves are all parametrized over the interval \([0,1]\) with uniformly bounded metric speed. In particular the sequence \(\{\gamma_\varepsilon\}_\varepsilon\) is equi-Lipschitz with respect to \(\d\). As the metric completion is compact, we can apply Ascoli-Arzelà Theorem and get, up to subsequence, a uniform limit Lipschitz curve \(\gamma\) joining \(p\) to \(q\). We now notice that by lower semicontinuity of the length functional \(L_\d\) with respect to pointwise convergence we get that
    \[
        L_\d(\gamma) \leq \liminf_{\varepsilon \to 0} L_\d(\gamma_\varepsilon) \leq \d(p,q).
    \]
    As the opposite inequality is always true, we have shown that \(\gamma\) is a $\d$-minimizing Lipschitz curve in the metric completion joining \(p\) to \(q\). We now aim to show that \(\gamma\) is indeed a causal curve. To do so we notice that, for any \(t<s \in [0,1]\), we have
    \[
        \begin{split}
            t_q - t_p &= \d(p,q) = L_\d(\gamma) \\
            &=L_\d(\gamma_{\vert [0,t]}) +  L_\d(\gamma_{\vert [t,s]}) + L_\d(\gamma_{\vert [s,1]}) \\
            & \geq \d(p, \gamma(t)) + \d(\gamma(t), \gamma(s)) + \d(\gamma(s),q) \\
            & \geq \abs{\tau(\gamma(t))-\tau(p)} + \abs{\tau(\gamma(s)) - \tau(\gamma(t))} +  \abs{\tau(q) - \tau(\gamma(s))} \\
            &\geq \abs{t_q - t_p} = t_q - t_p.
        \end{split}
    \]
    This implies that all the previous estimates are actually equalities; in particular we have proven that
    \begin{equation}\label{eq:curve_is_basically_causal}
        L_\d(\gamma_{\vert [t,s]}) =\d(\gamma(t), \gamma(s)) = \abs{\tau(\gamma(t)) - \tau(\gamma(s))}.
    \end{equation}
    Setting \(t=0\) we see that
    \[
        L_\d(\gamma_{\vert [0,s]}) =\d(p, \gamma(s)) = \abs{\tau(\gamma(s))-t_p}.
    \]
    Notice that on the left hand side we have a non-decreasing, non-negative function; moreover when \(s=1\) the right hand side equals \(t_q -t_p\). This implies that \(\tau(\gamma(s))-t_p\) is never negative: if it were, by continuity it would vanish at some later time since for \(s=1\) it is positive, whereas by monotonicity the left hand side would be non-zero. Hence we can write
    \[
        \tau(\gamma(s)) = t_p + L_\d(\gamma_{\vert [0,s]}),
    \]
    which shows that \(\tau(\gamma(s))\) is a non-decreasing function along the curve. As \(\gamma\) is a Lipschitz curve, we can reparametrize it by arclength, so that it is never locally constant. With such a reparametrization we actually get that \(\tau\) is strictly increasing along the curve and in particular for \(t \in (0, \d(p,q))\) the curve is contained in the original spacetime. As the null distance encodes causality, from \eqref{eq:curve_is_basically_causal} we have that for each \(t < s \in (0,\d(p,q))\) it holds \(\gamma(t) \leq \gamma(s)\). Since the spacetime is globally hyperbolic, we can apply \cite[Lemma~2.21]{Kunzinger2018} and conclude that the curve is causal in the classical sense (almost everywhere). To replace this curve by a smooth one, we can take points along it converging to \(p\) and \(q\) respectively and join them by geodesics, which exist by global hyperbolicity. The same limit curve argument will provide a curve between \(p\) and \(q\) which will be a geodesic and thus smooth in the interior of the spacetime.
\end{proof}

We could wonder whether or not sequences as in \Cref{thm:future_developed_convergence} converge to a limit space which is causally-null. To answer this question, we firstly need to understand what the causally-null distance on the limit space is going to be. As the time function on the limit space is just the cosmological time function of the sequence, we only need to study the causal relation \(\leq_{d_\infty,\tau}\), which is done in the following proposition:

\begin{proposition}\label{prop:synthetic-causal-relation-vs-intersection}
    Let \((X,\d_j,\tau)\) be a sequence of timed metric spaces with increasing distances and assume that they converge pointwisely to a distance $d_\infty$. The causal relation \(\leq_{d_\infty, \tau}\) defined on the limit space \((X,d_\infty,\tau)\) coincides with the intersection of all causal relations \(\leq_{\d_j,\tau}\).
\end{proposition}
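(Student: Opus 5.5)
We prove the two inclusions directly from the definition of the relation \(\leq_{d,\tau}\) in \Cref{def:causally_null_distance}. The only ingredients are that every \(\d_j\) is bounded below by the time difference, that is, \(\d_j(p,q)\geq \abs{\tau(q)-\tau(p)}\), and that the sequence \(\d_j\) increases pointwise to \(d_\infty\). For timed metric spaces the lower bound holds because \(\tau\) is \(1\)-Lipschitz with respect to each \(\d_j\). Since the sequence increases to \(d_\infty\), we have
\[
    \abs{\tau(q)-\tau(p)} \leq \d_j(p,q) \leq d_\infty(p,q) \qquad \text{for all } p,q\in X,\ j\in\N .
\]

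First let \(p\leq_{d_\infty,\tau} q\), so that \(d_\infty(p,q)=\tau(q)-\tau(p)\). Then for every \(j\) we get
\[
    \tau(q)-\tau(p) \leq \d_j(p,q) \leq d_\infty(p,q) = \tau(q)-\tau(p).
\]
Both inequalities are therefore equalities, so \(\d_j(p,q)=\tau(q)-\tau(p)\) and \(p\leq_{\d_j,\tau} q\) for every \(j\). This shows that \(\leq_{d_\infty,\tau}\) is contained in the intersection of the relations \(\leq_{\d_j,\tau}\).

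Conversely, let \(p\) and \(q\) be related by every \(\leq_{\d_j,\tau}\). Then \(\d_j(p,q)=\tau(q)-\tau(p)\) for all \(j\), so the sequence \(\d_j(p,q)\) is constant. Its pointwise limit is therefore \(d_\infty(p,q)=\tau(q)-\tau(p)\), which means \(p\leq_{d_\infty,\tau} q\). This gives the reverse inclusion.

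The argument has no real obstacle. The one point to check is that \(\tau\) is \(1\)-Lipschitz for each \(\d_j\), which provides the lower bound in the first inclusion. This holds by definition of a timed metric space, and in the null-distance setting it is immediate from \(\d_j(p,q)\geq\abs{\tau(q)-\tau(p)}\). Note that the limit only needs to be pointwise; uniform convergence is not required.
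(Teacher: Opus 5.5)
Your proof is correct and takes the same route as the paper. The sandwich $\tau(q)-\tau(p)\leq \d_j(p,q)\leq d_\infty(p,q)$ comes from $\tau$ being $1$-Lipschitz for each $\d_j$ together with the monotone convergence, and it forces equality for every $j$ exactly when equality holds for $d_\infty$; you simply write out both inclusions that the paper calls ``immediate''.
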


\begin{proof}
    For any two points \(p,q \in X\), we know that \(p \leq_{d_\infty,\tau} q\) if and only if
    \[
        d_\infty(p,q) = \tau(q) - \tau(p).
    \]
    Since \(d_\infty\) is the limit of the increasing sequence of distances \(\d_j\), for each \(j \in \N\) it holds that
    \[
        d_\infty(p,q) \geq \d_j(p,q) \geq \tau(q) - \tau(p),
    \]
    where the second inequality follows as \(\tau\) is \(1\)-Lipschitz for all distances \(\d_j\). From here it is immediate to see that \(p \leq_{d_\infty,\tau} q\) if and only if \(p \leq_{\d_j,\tau} q\) for all \(j \in \N\).
\end{proof}
This tells us that the causally-null distance \(\d_{d_\infty, \tau}\) on the limit space is the null distance computed with respect to the intersection of all causal relations, using the cosmological time function \(\tau\). 

On the other hand, one can define another causal relation on the limit space: with the assumption \eqref{eq:B}, the sequence of metrics \(g_j\) is converging pointwise to a non necessarily smooth tensor \(g_\infty\). Regardless of smoothness, this tensor induces a Lorentzian metric on each tangent space, as the vector \(\partial_t\) is of constant Lorentzian squared norm \(-1\) for all the metrics \(g_j\) and thus for \(g_\infty\) as well; as vectors that are spacelike for at least one of the metric\(g_j\) will be spacelike for the tensor \(g_\infty\), the limit metric will have signature \((-, + \dots, +)\) at each point. Following \cite[Def. 2.3]{grant2020} (building up on \cite{burtscher2015} for the Riemannian case) we define a causal curve in \([0,\tau_{\max}] \times M\) to be an absolutely continuous curve \(\gamma\) such that \(\dot{\gamma}\) is causal almost everywhere with respect to the metric \(g_\infty\) and future directed if \(g_\infty(\dot\gamma,\partial_t)<0\) almost everywhere. We can then define the causal relation \(\leq_\infty\) in the same fashion as smooth spacetimes:
\begin{definition}\label{def:absolutely-continuous}
    Let $g_\infty$ be the possibly non-continuous metric tensor obtained as the pointwise limit of the metric tensors $g_j$. We define the associated causal relation $\leq_\infty$ in $N$ as 
    \[
    p\leq_\infty q \iff \exists \gamma \text{ absolutely continuous future directed causal curve from }p \text{ to } q.
    \]
\end{definition}

In \cite{grant2020}, following \cite{chrusciel_lorentzian_2012}, it is shown that for metrics that are at least Lipschitz continuous, in particular for the metrics $g_j$, this notion of causality coincides with the classical one defined by means of piecewise smooth causal curves. For $g_\infty$ one can then define the associated null distance \(\d_\infty\) as in \Cref{def:null-distance} with respect to the cosmological time \(\tau\) by means of this causal relation. 

In the following proposition we show that \(\d_\infty\) is indeed a distance and the associated metric space to \((N,g_\infty)\) is causally-null. Moreover, we also show that the causal relation \(\leq_{\d_\infty,\tau}\) induced by \(\d_\infty\) coincides with the intersection of all the causal relations \(\leq_{\d_j,\tau}\) induced by the distances \(\d_j\), as happened with \(\leq_{d_\infty,\tau}\):
\begin{proposition}\label{prop:causal-relation-induced-by-limit-tensor}
    Let \((N,g_j)\) be a sequence of cosmological spacetimes satisfying assumption \eqref{eq:B} and denote by \(\d_\infty\) the null distance associated to the limit tensor \(g_\infty\). Then the completed space \((\bar{N}, \d_\infty)\) is causally-null, i.e., \(\d_\infty = \d_{\d_\infty,\tau}\).  Moreover, \(\leq_{\d_\infty,\tau} = \bigcap_j \leq_{\d_j,\tau}\), so that \(\leq_{\d_\infty,\tau} = \leq_{d_\infty, \tau}\).
\end{proposition}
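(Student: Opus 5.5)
The plan is to first establish the basic comparison $\d_j\leq \d_\infty\leq \d_{\tilde g}$ on $N$, and then derive both claims from the structure of $g_\infty$-causal curves. Under \eqref{eq:B}, $h_{j,t}\leq h_{\infty,t}\leq \tilde h$ pointwise. Hence every absolutely continuous $g_\infty$-causal curve is $g_j$-causal for all $j$, and every $\tilde g$-causal curve is $g_\infty$-causal. By the argument of \cite[Lemma~4.15]{allen_properties_2022}, which applies verbatim to absolutely continuous curves once we use \cite{grant2020} to identify the classical and absolutely continuous causal relations of the smooth metrics $g_j$ and $\tilde g$, we get $\d_j\leq\d_\infty\leq\d_{\tilde g}$. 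Consequently $\d_\infty$ is definite. Moreover, $(\bar N,\d_\infty)$ is homeomorphic to $[0,\tau_{\max}]\times M$ and compact, by the same sandwich argument as in \Cref{thm:uniform_convergence}. It also satisfies $|\tau(p)-\tau(q)|\leq\d_\infty(p,q)$, with equality whenever $p\leq_\infty q$.

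For the causally-null claim I would apply \Cref{prop:causally_null_completions}, so two inputs are needed. The first is that $(N,\d_\infty,\tau)$ itself is causally-null. Here I repeat the argument of \cite[Proposition~5.6]{che_perales_2026}: a piecewise $g_\infty$-causal chain realizing $\d_\infty$ up to $\varepsilon$ has consecutive breakpoints that are $\leq_\infty$-related. Such points satisfy $\d_\infty=\Delta\tau$, so they are $\leq_{\d_\infty,\tau}$-related, which gives $\d_{\d_\infty,\tau}\leq\d_\infty$. The second is the accessibility condition at boundary points $(0,x)$ and $(\tau_{\max},x)$. The vertical segments $s\mapsto(s,x)$ are $g_\infty$-causal, so the computation in the proof of \Cref{thm:cosmological_spacetimes_are_causally_null} goes through unchanged.

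For the identity of relations, the inclusion $\leq_{\d_\infty,\tau}\subseteq\bigcap_j\leq_{\d_j,\tau}$ follows from $\d_j\leq\d_\infty$ and $\d_j\geq\Delta\tau$, as in \Cref{prop:synthetic-causal-relation-vs-intersection}. For the converse, let $p\leq_{\d_j,\tau}q$ for all $j$. By \Cref{lemma:causal_relation_completion_implies_causal_curves}, for each $j$ there is a $g_j$-causal curve $\gamma_j$ from $p$ to $q$, parametrized by $t\in[t_p,t_q]$ as $\gamma_j(t)=(t,\sigma_j(t))$. Because $h_{j,t}\geq h_{1,t}$, the speeds satisfy $|\dot\sigma_j|_{h_1}\leq 1$. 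So the $\sigma_j$ are equi-Lipschitz for $d_{t_0}$, and by Arzelà–Ascoli a subsequence converges uniformly to a Lipschitz $\sigma$.

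The main obstacle is showing that the limit $\gamma(t)=(t,\sigma(t))$ is $g_\infty$-causal almost everywhere, i.e.\ $h_{\infty,t}(\dot\sigma,\dot\sigma)\leq1$ a.e. I would fix $k$ and use monotonicity: for $j\geq k$, $h_{k,t}(\dot\sigma_j,\dot\sigma_j)\leq h_{j,t}(\dot\sigma_j,\dot\sigma_j)\leq 1$. Lower semicontinuity of the convex functional $\int_a^b\sqrt{h_{k,t}(\dot\sigma,\dot\sigma)}\,dt$ under uniform convergence, applied on every subinterval (where $h_k$ is continuous), gives $h_{k,t}(\dot\sigma,\dot\sigma)\leq1$ a.e. Letting $k\to\infty$ along the countably many null sets yields $h_{\infty,t}(\dot\sigma,\dot\sigma)\leq1$ a.e., so $p\leq_\infty q$ and hence $\d_\infty(p,q)=\tau(q)-\tau(p)$. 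Boundary endpoints are handled since the curves are continuous up to $[0,\tau_{\max}]$. Combining this with \Cref{prop:synthetic-causal-relation-vs-intersection} gives $\leq_{\d_\infty,\tau}=\leq_{d_\infty,\tau}$.
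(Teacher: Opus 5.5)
Your proposal is correct and follows the paper's proof essentially step by step: the sandwich $\d_1\leq\d_\infty\leq\d_{\tilde g}$, causal-nullness of the completion via \Cref{prop:causally_null_completions} together with the vertical-segment computation at boundary points, the easy inclusion from $\d_j\leq\d_\infty$, and the converse via \Cref{lemma:causal_relation_completion_implies_causal_curves} plus a compactness argument for the $t$-parametrized curves. The only difference is in the last step: the paper invokes the limit curve theorem \cite[Theorem~3.1]{Minguzzi_2008}, first for $g_1$ and then for each $g_k$ on tails. You instead use Arzel\`a--Ascoli plus lower semicontinuity of $\int_a^b\sqrt{h_{k,t}(\dot\sigma,\dot\sigma)}\,dt$ on subintervals, which is a self-contained version of the same closed-cone argument and makes future-directedness of the limit curve automatic.
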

\begin{proof}
    From the inequalities $g_1\leq g_\infty\leq \tilde{g}$ we deduce that $\leq_1\supseteq\leq_\infty$ and $\leq_\infty\supseteq \leq_{\tilde{g}}$, where we recall that all causal relations are built by means of absolutely continuous curves. Consequently, we have $\d_1\leq \d_\infty \leq \d_{\tilde{g}}$; in particular \(\d_\infty\) is a definite distance. Since we know that \(\d_1\) and \(\d_{\tilde{g}}\) are Lipschitz equivalent, we have that \(\d_\infty\) is Lipschitz equivalent to them as well. In particular it induces the manifold topology and, analogously to what is shown in \Cref{thm:compactness_nullcompletion}, the completions are bi-Lipschitz equivalent, and therefore they have the same topology. Lastly, we can extend the relation \(\leq_\infty\) to the relation \(\leq_{\d_\infty, \tau}\) on the metric completion. By the same reasoning as in \Cref{thm:cosmological_spacetimes_are_causally_null} one can see that the completed space \((\bar{N}, \d_\infty)\) is causally-null.

    For the second part of the statement, we firstly see that the inclusion \(\leq_{\d_\infty,\tau} \subset \bigcap_j \leq_{\d_j,\tau}\) is immediate: if \(p \leq_\infty q\) we have that \(\hat{d}_\infty(p,q) = \tau(q) - \tau(p)\). Since \(\d_j \leq \d_\infty\) we have that \(\hat{d}_j(p,q) = \tau(q) - \tau(p)\) and thus \(p \leq_{\d_j,\tau} q\) for all \(j \in \N\). Conversely, if \(p \leq_{\d_j,\tau} q\) for all \(j \in \N\) by \Cref{lemma:causal_relation_completion_implies_causal_curves} there exists a sequence of $C^0$ curves \(\{\gamma_j\}_j\) joining \(p\) to \(q\) such that each \(\gamma_j\) is \(g_j\)-causal in the interior of the spacetime. As the metrics are increasing, we have that the sequence \(\{\gamma_j\}_j\) is \(g_1\)-causal in the interior; moreover it is easy to see that endowing the manifold \(N\) with the Riemannian metric tensor \(s \coloneqq dt^2 +h_1(t,x)\) all of these curves, parametrized over the interval \([t_p,t_q]\) with the coordinate \(t\) as their parameter, are Lipschitz with constant \(\sqrt{2}\) and contained in the compact set  \([t_p, t_q] \times M\). This allows us to apply the limit curve theorem (cf. \cite[Theorem~3.1]{Minguzzi_2008}) and find an absolutely continuous future directed causal curve \(\gamma\) which is the uniform limit (with respect to the distance induced by \(s\)) of a subsequence of \(\{\gamma_j\}_j\). Since the original sequence had the property that for each \(k \in \N\) the subsequence \(\{\gamma_j\}_{j \geq k}\) is a sequence of \(g_k\) causal curves, for any \(k \in \N\) we could consider a tail of the sequence we extracted by applying the limit curve theorem with respect to the metric \(g_1\) such that all of the curves are \(g_k\)-causal; applying once again the limit curve theorem we would get a subsequence converging to a \(g_k\)-causal absolutely continuous curve. Since the first subsequence we extracted was already converging to \(\gamma\) we deduce that the limit is the same and that \(\gamma\) is \(g_k\)-causal for all \(k \in \N\), which means that \(g_k(\dot\gamma,\dot\gamma)\leq 0\) almost everywhere. Taking the limit as \(k \to +\infty\) we get that the curve is \(g_\infty\)-causal as well and thus \(p \leq_\infty q\), which then immediately implies \( p \leq_{\d_\infty, \tau} q\).
\end{proof}
As an immediate application we can prove \Cref{thm:causally_null_limit_is_null_distance_of_limit_tensor}:
\begin{proof}[Proof of {\Cref{thm:causally_null_limit_is_null_distance_of_limit_tensor}}]
    In the previous proposition we proved that \(\leq_{\d_\infty,\tau} = \leq_{d_\infty, \tau}\). In particular, we have \(\d_{\d_\infty,\tau} = \d_{d_\infty,\tau}\). On the other hand we know that \((\bar{N}, \d_\infty)\) is causally-null and thus \(\d_\infty = \d_{\d_\infty,\tau}\).
\end{proof}
\section{Examples}
    In this last section we discuss the results we obtained by applying them to some examples and showing where the assumptions are needed in order for them to hold. 
    
    More precisely, in \Cref{ex:allen-burtscher} we apply \Cref{thm:future_developed_convergence} to an example featured in \cite[Example 5.7]{allen_properties_2022} to show that one can conclude uniform and future developed convergence of a sequence of smooth spacetimes without necessarily requiring that their metric tensor converge uniformly. Next in \Cref{ex:limit_non_compact} we show that by dropping the assumptions made in \Cref{thm:future_developed_convergence} one can possibly end up with a limit space whose topology is different than the one of the sequence and, in particular, is non-compact. In \Cref{ex:triangle} we also show that without the assumption in \Cref{prop:causally_null_completions} we can build causally-null compactifiable spacetimes with completions that are not causally-null and in \Cref{ex:AB-bis} we show that even a uniform limit of causally-null spaces need not be causally-null. Lastly, in \Cref{ex:Cantor} we see that for non-smooth tensors as in \Cref{prop:causal-relation-induced-by-limit-tensor} it is necessary to lower the regularity class of causal curves to absolutely continuous ones for \Cref{prop:causal-relation-induced-by-limit-tensor} to hold. 
\begin{example}\label{ex:allen-burtscher}
    We apply our result to one example in the warped product case presented in \cite[Example 5.7]{allen_properties_2022}. This is an instance where one cannot apply \cite[Theorem 1.4]{allen_properties_2022} to study the convergence of the distances as the warping functions are not converging uniformly; however \Cref{thm:uniform_convergence} still applies:
    consider a compact Riemannian manifold \((M,h)\) and a smooth increasing function \(u \colon [0,1] \to (0,1]\) with \(u \equiv u_0 \in (0,1)\) on the interval \([0,1/2]\) and \(u(1) =1\), with \(u'(1) = 0\). We then consider the Lorentzian manifold \(N =(0,2) \times M\) with metrics
    \[
        g_j = -dt^2 +f_j(t)h,
    \]
    where
    \[
        f_j(t) \coloneqq
        \left\{\begin{aligned}
            &u(jt) \quad &&\text{if} \ t \in \left[0, \frac{1}{j}\right], \\
            &1 \quad &&\text{if} \ t \in \left(\frac{1}{j},1\right].
        \end{aligned}\right.
    \]
    Notice that the metrics \(h_j = f_j(t)h\) are a sequence of increasing metrics which are uniformly bounded by the metric \(h\). Then \Cref{thm:future_developed_convergence} applies and we get future developed convergence; moreover the corresponding null distances converge uniformly, even though the warping functions do not converge uniformly. Indeed, they are converging to the function
    \[
        f_\infty(t) \coloneqq
        \left\{\begin{aligned}
            &u_0\quad &&\text{if} \ t=0, \\
            &1 \quad &&\text{else},
        \end{aligned}\right.
    \]
    which is discontinuous. 
\end{example}

The condition $h_{j,t}\leq \tilde{h}_t$ featured in assumption \eqref{eq:B} is necessary: without it  we can build sequences that converge to a limit space with a different, non-compact topology. We build such an example in \Cref{ex:limit_non_compact} starting from a sequence of warped products. To prove that the limit is indeed not compact we firstly need some technical tools to explicitly compute the null distance in the warped product case. We start with the following lemma:

\begin{lemma}\label{lem:no-minimal-curve}
    Let $(M,h)$ be a compact connected Riemannian manifold and let $N=(0,\tau_{\max})\times M$ have Lorentzian metric $g=-dt^2+f(t)^2\,h$, where the warping function $f\colon (0,\tau_{\max})\to \R^+$ is smooth and strictly increasing. If $p,q\in N$ are such that $J^-(p)\cap J^-(q) = \varnothing$, then there is no piecewise causal curve joining $p$ and $q$ which has minimal null length.
\end{lemma}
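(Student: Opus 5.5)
Suppose, for contradiction, that a piecewise causal curve $\beta$ from $p$ to $q$ realizes $\d(p,q)$. The plan is to show that $\beta$ can always be modified into a strictly shorter piecewise causal curve. The hypothesis $J^-(p)\cap J^-(q)=\varnothing$ rules out the trivial competitor (go down from $p$ to a common past point, then up to $q$). So $\beta$ must contain genuine zig-zags, and the strict monotonicity of $f$ will let us shave null length off one of them.

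First I reduce to a normal form. Orient each piece future directed and write it as $t\mapsto (t,\sigma_i(t))$ with $f(t)\abs{\dot\sigma_i}_h\le 1$. The null length only depends on the $t$-values at the breakpoints, so any piece can be replaced by a null piece with the same endpoints and the same spatial trace. Merging consecutive pieces of the same orientation, $\beta$ alternates between future and past directed pieces. Its breakpoints are then local maxima (``peaks'') and local minima (``valleys'') of $t$ along $\beta$.

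Next I show $\beta$ must contain a peak. If it had none, $\beta$ would be past directed from $p$ to some valley point $r$ and then future directed to $q$, possibly with one piece degenerate. Then $r\in J^-(p)\cap J^-(q)$, contradicting the hypothesis. A peak preceded and followed by valleys (or endpoints) therefore exists. Any endpoint that plays the role of a neighbouring valley is fine: the endpoint is then below the peak.

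The key local improvement is at a peak $z=(t_z,x_z)$ with adjacent lower points $a=(t_a,x_a)$ and $b=(t_b,x_b)$, where the pieces $a\to z$ and $b\to z$ are future directed null. This peak contributes $2t_z-t_a-t_b$ to the null length. The spatial distance a future causal curve from height $t_a$ to height $t$ can cover is $\int_{t_a}^{t} f^{-1}$. I replace $z$ by a lower peak $z'$ at height $t_z-\delta$, reached along the same $h$-geodesic spatial path from $x_a$ to $x_b$. The lowered pieces cover $\int_{t_a}^{t_z-\delta}f^{-1}+\int_{t_b}^{t_z-\delta}f^{-1}$. To keep the endpoints, the spatial shortfall $\approx 2\delta/f(t_z)$ must be recovered.

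I recover it by inserting, just below $z'$, a small ``valley'' zig-zag that moves spatially at a lower height. Alternatively, I lower the neighbouring valleys slightly, where $f$ is smaller and so $1/f$ is larger. Strict monotonicity of $f$ then gives a net gain. Covering spatial distance $\ell$ costs time about $\ell f(t)$ at height $t$, which is cheaper at lower heights. This is exactly why optimal curves want to sink toward $t=0$, but they cannot attain $t=0$ inside $N$.

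A cleaner way to organise this step is a global argument. The null length of any zig-zag curve is bounded below by an expression whose infimum is approached only as valleys tend to $t=0$ or as the number of oscillations tends to infinity. Strictly decreasing $f$ toward $0$ means that descending further always helps, so no finite configuration is optimal.

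The main obstacle is this quantitative comparison: showing that the lowering move strictly decreases null length for every admissible configuration, including the edge cases where a valley is an endpoint $p$ or $q$ and so cannot be lowered. I would first try the explicit first-order computation in $\delta$, using $f(t_{\text{valley}})<f(t_z)$. Lowering the valley adjacent to the peak by $\delta'$ increases the valley's null cost by $2\delta'$ and gains spatial reach $2\delta'/f(t_v)$. Lowering the peak saves $2\delta$ and loses reach $2\delta/f(t_z)$. Choosing $\delta'=\delta f(t_v)/f(t_z)<\delta$ gives a net saving of $2(\delta-\delta')>0$.

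When both neighbours of every peak are endpoints, the curve is $p\nearrow z\searrow q$. Here I instead insert a new valley in the middle of the spatial path just below $z$. Its cost and gain are computed the same way, with a valley at height $t_z-\eta$ and $f(t_z-\eta)<f(t_z)$. This again yields a strict decrease, contradicting minimality.
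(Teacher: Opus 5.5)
Your proposal is correct in substance, but it argues differently from the paper. The paper does no surgery on the height profile. At a peak $z$ whose neighbouring breakpoints lie at heights $t_a,t_b<t_z$, it in effect sets $t'=\max(t_a,t_b)$. It then isolates the part of the curve lying strictly above the slice $\{t=t'\}$, between two points $p^\pm=(t',x^\pm)$, and uses that $f$ is strictly increasing to get
\[
\hat L(\gamma|_{[s^-,s^+]})>f(t')\,d_M(x^-,x^+)=d_{t'}(x^-,x^+)\geq \hat d(p^-,p^+).
\]
The last inequality is the slice bound of \Cref{thm:generalised_upper_bound}. So a strictly shorter competitor exists, although it is never exhibited. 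Hence a minimal curve has no peak, so it is either monotone or a ``V'' through a point of $J^-(p)\cap J^-(q)$, and both are excluded.

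You replace the slice bound by an explicit finite trade in a one-dimensional reduction. With $F'=1/f$, a piecewise causal curve is encoded, as far as null length goes, by its alternating breakpoint heights $t_0,\dots,t_n$, subject only to the reach constraint $\sum_i\lvert F(t_i)-F(t_{i-1})\rvert\geq L_h(\text{spatial path})$. Lowering a peak is then paid for more cheaply lower down, where $1/f$ is larger. Your route is self-contained and constructive; your valley insertion is a one-oscillation version of the zig-zag construction behind \Cref{thm:generalised_upper_bound}. The paper's route is shorter and treats every peak uniformly, with no separate case for $p\nearrow z\searrow q$.

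Three points need care when you write this up.

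First, your count that lowering a valley $v$ by $\delta'$ buys reach $2\delta'/f(t_v)$ is right only if breakpoints may slide along the spatial path. Only then does the extra reach of the piece on the far side of $v$ become usable. If you keep $x_a,x_b$ fixed and route the lowered peak along the geodesic from $x_a$ to $x_b$, as you phrase it, only $\delta'/f(t_v)$ is usable. The trade is then unprofitable whenever $f(t_v)>f(t_z)/2$. So state the constraint globally. Given new heights with enough total reach, redistribute the spatial path among the pieces, using a null portion followed by a vertical one where there is slack.

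Second, make the comparisons exact rather than first order. Choose $\delta'$ with $\int_{t_v-\delta'}^{t_v}f^{-1}=\int_{t_z-\delta}^{t_z}f^{-1}$; this forces $\delta'\leq\delta f(t_v)/f(t_z-\delta)<\delta$. Exactness really matters in the insertion case. There the net gain is only of order $\delta^2$ (or smaller where $f'$ vanishes), which is the same order as the errors of your first-order approximations, so the first-order computation cannot decide its sign. Replace the peak by two peaks at height $t_z-\delta$ around a valley at height $t_z-\eta$. Then use $\int_{t_z-2\delta}^{t_z-\delta}f^{-1}>\int_{t_z-\delta}^{t_z}f^{-1}$, which holds because $1/f$ is strictly decreasing, to find $\eta\in(\delta,2\delta)$ that exactly restores the reach. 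The null length then changes by $2\eta-4\delta<0$.

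Third, the normalisation ``replace each piece by a null piece with the same endpoints and spatial trace'' is false as stated. Unless the piece is already null, a null curve following the same trace from one endpoint misses the other endpoint in time. It is also unnecessary: a non-null piece simply satisfies the reach inequality with slack, which only helps.
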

\begin{proof}
    Consider two points $p_i=(t_i,x_i)\in N$ with $t_1 \leq t_2$ that are not causally related. Let us show that if $\gamma$ is a piecewise causal curve joining them with minimal null length, then the maximum value of its $t$-component is equal to $t_2$. 

    Indeed, assume otherwise that $\gamma=(\alpha,\beta)\colon [0,1]\to N$ is a piecewise causal curve joining $p_1$ and $p_2$ with null length $\L(\gamma)=\d(p_1,p_2)$ and $ t^*\coloneqq \max t(\gamma(s)) > t_2$ (see \Cref{fig:example}). Consider one of the points $p^*=\gamma(s^*)$ in the image of $\gamma$ such that $t(p^*)=t^*$. Let $p^-=\gamma(s^-)$ be the last point before $p^*$ with $t$-coordinate $t_2$, and $p^+=\gamma(s^+)$ the first point after $p^*$ with $t$-coordinate $t_2$. In particular, $t(\gamma(s))>t_2$ for every $s\in(s^-,s^+)$. Moreover, $\gamma|_{(s^-,s^*)}$ is future directed and $\gamma|_{(s^*,s^+)}$ is past directed. As $\gamma$ is piecewise causal (in fact piecewise null by \cite[Lemma~3.20]{sormani_null_2016}), one has
\[
    \begin{split}
        \L(\gamma|_{(s^-,s^+)})&=\int_{s^-}^{s^+} f(\alpha(s))\, \sqrt{h\bigl(\dot{\beta}(s),\dot{\beta}(s)\bigr)}\, ds \\
        &> f(t_2)\, d_M(x^-,x^+) = d_{t_2}(x^-,x^+) \geq \d(p^-,p^+),
    \end{split}
\]
where in the last inequality we used \Cref{thm:generalised_upper_bound}.
        
Now, the null length of $\gamma$ is equal to the sum of the lengths of its segments, i.e., 
\[
\hat{L}(\gamma)=\hat{L}(\gamma|_{(0,s^-)})+\hat{L}(\gamma|_{(s^*,s^+)})+\hat{L}(\gamma|_{(s^+,1)}) > \d(p,p^-)+\d(p^-,p^+)+\d(p^+,q) \geq \d(p,q),
\]
which contradicts the minimality of $\gamma$. With this argument we have also proved that if the breakpoint between two consecutive segments of a piecewise causal curve is in their future, then there exists a strictly shorter piecewise causal curve with the same endpoints. 

As a consequence, if a piecewise causal curve joining the two points is minimal, it cannot have more than one breakpoint: if it had at least two, there would be two consecutive segments that are respectively future and past directed and thus the previous argument would yield that the curve is not minimal. As the points are not causally related, the curve needs to have exactly one breakpoint. If the two segments are future and past directed, respectively, the previous argument applies again. Thus, the segments need to be past and future directed, respectively; however, this would contradict the assumption that the causal pasts do not intersect.
\end{proof}

We next give a formula to compute the null distance on warped products with an increasing warping function. This improves the one featured in \cite[Example~5.7]{allen_properties_2022} by providing an explicit expression in terms of the warping function and the given points.

\begin{proposition}\label{prop:explicit_null_distance}
    Let $(M,h)$ be a compact connected Riemannian manifold. Consider the manifold $N=(0,\tau_{\max})\times M$ with Lorentzian metric $g=-dt^2+f(t)^2\,h$, where the warping function $f\colon (0,\tau_{\max})\to \R^+$ is smooth and (non necessarily strictly) increasing. If $p,q\in N$ are such that $A\coloneqq J^-(p)\cap J^-(q) = \varnothing$, then
    \begin{equation}\label{eq:explicit_null_distance}
    \d(p,q)=t_p+t_q+\lim_{s\to 0} f(s)\, \biggl(d_M(x_p,x_q)- \int_0^{t_p}\frac{ds}{f(s)} - \int_0^{t_q}\frac{ds}{f(s)} \biggr),
    \end{equation}
    and in particular the integrals are finite. If, on the other hand, $A\neq\varnothing$, then $t$ achieves a maximum in $A$ at a point $r$, and
    \[
    \d(p,q)=t_p+t_q-2t_r.
    \]
\end{proposition}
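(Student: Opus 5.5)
The approach is to reduce everything to the conformal time $\sigma(t)\coloneqq\int_{0}^{t}ds/f(s)$ (a priori possibly $+\infty$ near $0$; we use $\int_{t_0}^t$ and then send $t_0\to0$). In the coordinates $(\sigma,x)$ the metric is $f^2(-d\sigma^2+h)$, so causal curves are exactly those with $\abs{d\sigma}\geq \abs{dx}_h$. Hence for points $p=(t_p,x_p)$ and $r=(t_r,x_r)$ with $t_r\leq t_p$ we have $r\leq p$ iff $d_M(x_r,x_p)\leq \sigma(t_p)-\sigma(t_r)$. This follows from compactness of $M$, which gives minimizing $h$-geodesics that can be lifted as null curves.

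\textbf{Case $A\neq\varnothing$.} Any $r\in A$ gives a chain $p\geq r\leq q$ of null length $t_p+t_q-2t_r$. So $\d(p,q)\leq t_p+t_q-2\sup_A t$.

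The set $A$ is closed in $N$, and $t$ is bounded above on $A$ by $\min(t_p,t_q)$. Points of $A$ with $t$ near the supremum stay away from $t=0$, so a maximizing sequence has a convergent subsequence in a compact slab, and $r$ exists.

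For the lower bound, take a piecewise causal curve from $p$ to $q$ and let $m$ be the minimum of $t$ along it. Its null length is at least $(t_p-m)+(t_q-m)$. It then suffices to show that the curve has null length at least $t_p+t_q-2t_r$.

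I would argue as in \Cref{lem:no-minimal-curve}, by induction on the breakpoints. Whenever two consecutive segments form a "roof" (future then past directed), they can be replaced by a past/future "valley" through a lower point without increasing length. This uses that $f$ is increasing: going down first is cheaper spatially, since the spatial distance covered per unit of $\sigma$ is the same, and lowering the vertex only enlarges the reachable set. After all replacements the curve consists of one valley, whose vertex lies in $A$.

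A cleaner alternative, which I would try first, uses the $\sigma$-inequality directly. Along a curve the total $\sigma$-variation dominates $d_M(x_p,x_q)$, and for a fixed total $t$-variation the $\sigma$-variation is maximized by spending it at the lowest times, because $1/f$ is decreasing. So $t$-length $L$ yields $\sigma$-length at most that of going down from $p$ and from $q$ to a common level. This gives both formulas at once.

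\textbf{Case $A=\varnothing$.} Here the optimal curve descends from both points as far as allowed. Fix a small level $s$. Going from $p$ down to level $s$ along a past null curve towards $x_q$ covers spatial distance $\sigma(t_p)-\sigma(s)$, and similarly from $q$. The remaining spatial gap is $D(s)\coloneqq d_M(x_p,x_q)-(\sigma(t_p)-\sigma(s))-(\sigma(t_q)-\sigma(s))$, which is positive because $A=\varnothing$. This gap is bridged at level $s$ using \Cref{thm:generalised_upper_bound}, at cost at most $f(s)D(s)$, more precisely by a zigzag near level $s$.

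This gives the upper bound $t_p+t_q-2s+f(s)D(s)$. A lower bound of the same form follows from the $\sigma$-variation argument above: with minimum level $s$ along the curve, each unit of spatial distance beyond what the descent covers costs at least $f(s)$. So $\d(p,q)=\inf_s\,[t_p+t_q-2s+f(s)D(s)]$.

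Since $A=\varnothing$ for all $s$, we get $\sigma(t_p)+\sigma(t_q)-2\sigma(s)<d_M(x_p,x_q)\leq\diam M$ for every $s$. Hence $\int_0^{t}ds/f(s)<\infty$, which proves finiteness. Moreover, the expression $-2s+f(s)D(s)$ has derivative $-2+f'D+f\cdot 2/f=f'D\geq0$ in $s$, so it is nondecreasing. The infimum is therefore the limit as $s\to0$, and $D(s)\to d_M-\sigma(t_p)-\sigma(t_q)$, which gives \eqref{eq:explicit_null_distance}.

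The main obstacle is the rigorous lower bound: showing that no piecewise causal curve beats the "descend-then-bridge" profile. I would handle it with the rearrangement inequality in $\sigma$, treating the curve's $t$-profile together with the decreasing weight $1/f$.
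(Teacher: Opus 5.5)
Your proposal is correct, and its lower bound takes a genuinely different route from the paper's.

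\textbf{The paper's route.} It argues by curve surgery. It first reduces to competitors $\gamma=(\alpha,\beta)$ whose first and last pieces are null. It then builds $\tilde\gamma=(\tilde\alpha,\beta)$ with the same spatial projection but with the time profile pushed down to a level $t^*<\min\alpha$ (resp.\ to $t_r$ when $A\neq\varnothing$). Comparing $\int f(\alpha)\abs{\dot\beta}_h$ with $\int f(\tilde\alpha)\abs{\dot\beta}_h$, it arrives at $\hat{L}(\gamma)\geq F(t^*)$, where in your notation $F(s)=t_p+t_q-2s+f(s)D(s)$, and finally lets $t^*\to0$.

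\textbf{Your route.} You never modify the competitor. In conformal time you use $d_M(x_p,x_q)\leq L_h(\beta)\leq \mathrm{TV}(\sigma\circ\alpha)$, the total variation of $\sigma\circ\alpha$. You then bound it through $\mathrm{TV}(\alpha)=\hat{L}(\gamma)$, using only that $1/f$ is nonincreasing.

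The step you single out as the main obstacle is short. Since $t$ is strictly monotone on each causal piece, $\hat{L}(\gamma)=\int N(y)\,dy$ and $\mathrm{TV}(\sigma\circ\alpha)=\int N(y)f(y)^{-1}\,dy$, where $N(y)$ counts the pieces crossing level $y$. With $m=\min\alpha$, one has $N\geq 2$ on $(m,\min(t_p,t_q))$ and $N\geq 1$ between $t_p$ and $t_q$. The excess of $N$ over this profile lives at levels $\geq m$, where $1/f\leq 1/f(m)$. This gives $\hat{L}(\gamma)\geq F(m)$ for every competitor.

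Combined with your upper bound (the same construction as the paper's, via \Cref{thm:generalised_upper_bound}) and $F'=f'D$, this yields $\hat{d}(p,q)=\inf F=\lim_{s\to 0}F(s)$ when $A=\varnothing$. When $A\neq\varnothing$ and $p,q$ are unrelated, maximality of $t_r$ forces $D(t_r)=0$ and $D>0$ above $t_r$. Hence $F(m)\geq F(t_r)=t_p+t_q-2t_r$ for $m\geq t_r$, and the case $m<t_r$ is trivial.

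\textbf{What each buys.} Your route handles both cases with a single inequality and explains why the optimum is approached as $s\to0$. It also avoids the reductions the surgery needs: null end pieces, and piecewise null competitors so that $\tilde\alpha\leq\alpha$ holds. The paper's route is more constructive and parallels \Cref{lem:no-minimal-curve}.

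\textbf{Drop the roof-to-valley fallback.} As stated, its replacement step is false. A roof $a\nearrow v\searrow b$ can be traded for a valley of no greater length only when $t_a+t_b-t_v>0$; in that case concavity of $\sigma$ does the job. For $f\equiv 1$, $t_a=t_b=0.1$, $t_v=1$ and $d_M(x_a,x_b)=1$, the roof exists but $J^-(a)\cap J^-(b)=\varnothing$, so no valley exists at all. The fallback can be repaired when $A\neq\varnothing$, since such a roof already forces $\hat{L}(\gamma)\geq t_p+t_q$. The $\sigma$-argument makes it unnecessary anyway.
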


\begin{proof}
First of all, the finiteness of the integrals is a consequence of \cite[Lemma~3.23]{sormani_null_2016}, which characterizes when points in a warped product are causally related: a point $r=(t_r,x_r)$ is in the intersection of the causal pasts of $p$ and $q$ if and only if
\[
d_M(x_p,x_r)\leq \int_{t_r}^{t_p}\frac{ds}{f(s)} \qquad \text{and} \qquad d_M(x_q,x_r)\leq \int_{t_r}^{t_q}\frac{ds}{f(s)}.
\]
In particular, if there is some $t^*\in(0,\tau_{\max})$ such that 
\[
\int_{t^*}^{t_p} \frac{ds}{f(s)} +
\int_{t^*}^{t_q} \frac{ds}{f(s)}  
\geq d_M(x_p,x_q),
\]
then there exists a point $x^*\in M$ such that $r\coloneqq(t^*,x^*)$ is in the intersection of the causal pasts of $p$ and $q$. Indeed, consider a minimal curve $\beta$ in $M$ joining $x_p$ and $x_q$. If any of the integrals above alone (say, without loss of generality, the one involving $p$) is greater than or equal to $d_M(x_p,x_q)$, take any point $x^*$ on $\beta$ such that the other integral (in our case, the one involving $q$) is greater than $d_M(x^*,x_q)$. For the other case, when both integrals are strictly smaller than $d_M(x_p,x_q)$, consider $x^*$ to be the unique point on $\beta$ such that $d_M(x_p,x^*)$ equals the integral above involving $p$.

So from $J^-(p)\cap J^-(q)=\varnothing$ one deduces that there is no $t^*$ as above. Thus
\begin{equation}\label{eq:integral_ineq}
\int_{0}^{t_p}\frac{ds}{f(s)} + \int_{0}^{t_q}\frac{ds}{f(s)}\leq d_M(x_p,x_q)<\infty,
\end{equation}
and in particular both integrals are finite. The converse implication, namely that \eqref{eq:integral_ineq} implies that the intersection of the causal pasts is empty, is easily seen to be true by using triangular inequality.

\begin{figure}
    \centering
    \begin{tikzpicture}[x=0.75pt,y=0.75pt,yscale=-1,xscale=1]
    \draw [color={rgb, 255:red, 74; green, 74; blue, 74 }  ,draw opacity=1 ] [dash pattern={on 0.84pt off 2.51pt}]  (441.4,45.8) -- (222.6,45.85) ;
    \draw    (223,3) -- (223,113.4) ;
    \draw    (440.6,113.4) -- (223,113.4) ;
    \draw  [fill={rgb, 255:red, 0; green, 0; blue, 0 }  ,fill opacity=1 ] (238.81,87.64) .. controls (238.81,86.44) and (239.68,85.47) .. (240.77,85.47) .. controls (241.85,85.47) and (242.73,86.44) .. (242.73,87.64) .. controls (242.73,88.83) and (241.85,89.8) .. (240.77,89.8) .. controls (239.68,89.8) and (238.81,88.83) .. (238.81,87.64) -- cycle ;
    \draw  [fill={rgb, 255:red, 0; green, 0; blue, 0 }  ,fill opacity=1 ] (416.44,46.27) .. controls (416.44,45.07) and (417.31,44.1) .. (418.4,44.1) .. controls (419.48,44.1) and (420.36,45.07) .. (420.36,46.27) .. controls (420.36,47.46) and (419.48,48.43) .. (418.4,48.43) .. controls (417.31,48.43) and (416.44,47.46) .. (416.44,46.27) -- cycle ;
    \draw    (240.77,87.64) -- (285.8,22.2) -- (323.8,62.2) -- (351.8,35) -- (391.8,73.8) -- (418.4,46.27) ;
    \draw [shift={(263.28,54.92)}, rotate = 124.54] [color={rgb, 255:red, 0; green, 0; blue, 0 }  ][line width=0.75]    (4.37,-1.96) .. controls (2.78,-0.92) and (1.32,-0.27) .. (0,0) .. controls (1.32,0.27) and (2.78,0.92) .. (4.37,1.96)   ;
    \draw [shift={(304.8,42.2)}, rotate = 226.47] [color={rgb, 255:red, 0; green, 0; blue, 0 }  ][line width=0.75]    (4.37,-1.96) .. controls (2.78,-0.92) and (1.32,-0.27) .. (0,0) .. controls (1.32,0.27) and (2.78,0.92) .. (4.37,1.96)   ;
    \draw [shift={(337.8,48.6)}, rotate = 135.83] [color={rgb, 255:red, 0; green, 0; blue, 0 }  ][line width=0.75]    (4.37,-1.96) .. controls (2.78,-0.92) and (1.32,-0.27) .. (0,0) .. controls (1.32,0.27) and (2.78,0.92) .. (4.37,1.96)   ;
    \draw [shift={(371.8,54.4)}, rotate = 224.13] [color={rgb, 255:red, 0; green, 0; blue, 0 }  ][line width=0.75]    (4.37,-1.96) .. controls (2.78,-0.92) and (1.32,-0.27) .. (0,0) .. controls (1.32,0.27) and (2.78,0.92) .. (4.37,1.96)   ;
    \draw [shift={(405.1,60.03)}, rotate = 134.01] [color={rgb, 255:red, 0; green, 0; blue, 0 }  ][line width=0.75]    (4.37,-1.96) .. controls (2.78,-0.92) and (1.32,-0.27) .. (0,0) .. controls (1.32,0.27) and (2.78,0.92) .. (4.37,1.96)   ;
    \draw  [fill={rgb, 255:red, 0; green, 0; blue, 0 }  ,fill opacity=1 ] (267.24,46.27) .. controls (267.24,45.07) and (268.11,44.1) .. (269.2,44.1) .. controls (270.28,44.1) and (271.16,45.07) .. (271.16,46.27) .. controls (271.16,47.46) and (270.28,48.43) .. (269.2,48.43) .. controls (268.11,48.43) and (267.24,47.46) .. (267.24,46.27) -- cycle ;
    \draw  [fill={rgb, 255:red, 0; green, 0; blue, 0 }  ,fill opacity=1 ] (306.44,45.87) .. controls (306.44,44.67) and (307.31,43.7) .. (308.4,43.7) .. controls (309.48,43.7) and (310.36,44.67) .. (310.36,45.87) .. controls (310.36,47.06) and (309.48,48.03) .. (308.4,48.03) .. controls (307.31,48.03) and (306.44,47.06) .. (306.44,45.87) -- cycle ;
    \draw  [fill={rgb, 255:red, 0; green, 0; blue, 0 }  ,fill opacity=1 ] (283.84,22.2) .. controls (283.84,21) and (284.72,20.04) .. (285.8,20.04) .. controls (286.88,20.04) and (287.76,21) .. (287.76,22.2) .. controls (287.76,23.4) and (286.88,24.36) .. (285.8,24.36) .. controls (284.72,24.36) and (283.84,23.4) .. (283.84,22.2) -- cycle ;
        
    \draw (232.4,89.2) node [anchor=north west][inner sep=0.75pt]  [font=\scriptsize]  {$p_{1}$};
    \draw (417.2,46.7) node [anchor=north west][inner sep=0.75pt]  [font=\scriptsize]  {$p_{2}$};
    \draw (274,9.5) node [anchor=north west][inner sep=0.75pt]  [font=\scriptsize]  {$p^{*}$};
    \draw (310.8,27.5) node [anchor=north west][inner sep=0.75pt]  [font=\scriptsize]  {$p^{+}$};
    \draw (258.8,29.1) node [anchor=north west][inner sep=0.75pt]  [font=\scriptsize]  {$p^{-}$};
    \end{tikzpicture}
    \caption{Situation described in the proof of \Cref{lem:no-minimal-curve}. The point $p^*$ has greater $t$-component than those of $p_1$ and $p_2$, from which one deduces that the piece of the curve from $p^-$ to $p^+$ cannot be minimal. Therefore, the whole curve cannot be minimal.}
    \label{fig:example}
\end{figure}

We proceed now to prove equation~\eqref{eq:explicit_null_distance} in the case $J^-(p)\cap J^-(q)=\varnothing$. To that end, let again $\gamma=(\alpha,\beta)\colon[0,1]\to N$ be a piecewise causal curve joining two points $p$ and $q$ whose pasts do not intersect. We will prove that there exists a piecewise null curve $\tilde{\gamma}$ joining the same points, which is not longer than $\gamma$, and whose first and last segments $\tilde{\gamma}_1$ and $\tilde{\gamma}_n$ have minimal $t$-component $t^*$ strictly smaller than the minimal $t$-component $t_{\min}$ of $\gamma$.

First, we show that $\gamma$ can be assumed to have null first and last segments. Indeed, for the first segment notice that $\gamma$, which is continuous, starts in the closed set $J^-(p)\cup J^+(p)$ and eventually leaves it. Therefore, there is a maximal value $s$ such that $p_1\coloneqq\gamma(s)\in J^-(p)\cup J^+(p)$. Moreover, $p_1$ is in the topological boundary of $J^-(p)\cup J^+(p)$ and, by standard causality theory, there is a null curve joining $p$ with $p_1$ which is therefore $\d$-minimizing. To sum up, the concatenation of such a curve with $\gamma|_{[s,1]}$ is a piecewise causal curve joining $p$ and $q$ whose first segment is null and it is not longer than $\gamma$. Similarly one proceeds with the last segment.

So, without loss of generality, assume that $\gamma$ is piecewise causal with first and last segments being null. Let $t^*\in(0,t_{\min})$ and define $\tilde{\gamma}=(\tilde{\alpha},\beta)$ in the following way: first we set its spatial part to be the same as that of $\gamma$. About the time component, we define it so that the first segment, $\tilde{\gamma}_1$, of $\tilde{\gamma}$ is the past directed null curve starting at $p$ whose endpoint has $t$-component $\smash{\tilde{\alpha}(\tilde{s}_1)=t^*}$ and the last segment, $\smash{\tilde{\gamma}_n}$, is the (future directed) null curve whose endpoint is $q$ and whose starting point has $t$-component $\smash{\tilde{\alpha}(\tilde{s}_{n-1})=t^*}$. These curves can be constructed by solving the corresponding ODE as in the proof of \Cref{thm:generalised_upper_bound}. For the remaining part of the curve $\tilde{\gamma}$, one can take any piecewise null curve joining $\tilde{\gamma}(\tilde{s}_1)$ with $\tilde{\gamma}(\tilde{s}_{n-1})$ with $t$-component between $t^*$ and $t_{\min}$.

The existence of these two segments (or, more precisely, that their parameters do not overlap, i.e., $\tilde{s}_1<\tilde{s}_{n-1}$) is a consequence of \eqref{eq:integral_ineq}:
    \[
    L_h(\beta)\geq d_M(x_p,x_q)>\int_{t^*}^{t_p}\frac{ds}{f(s)} + \int_{t^*}^{t_q}\frac{ds}{f(s)}= L_h(\beta|_{[0,\tilde{s}_1)}) +L_h(\beta|_{(\tilde{s}_{n-1},1]}).
    \]
 In other words, the length of $\beta$ is big enough for the null curve to go down from $t_p$ to $t^*$ (it ``spends" a spatial length equal to the first integral above) and then up from $t^*$ to $t_q$ (spatial length equal to the second integral).

By construction, $\tilde{\alpha}(s)<\alpha(s)$, whenever $s\in[\tilde{s}_1,\tilde{s}_{n-1}]$ whereas, being both $\tilde{\gamma}$ and $\gamma$ piecewise null, one has that $\tilde{\alpha}(s)\leq\alpha(s)$ in the first and last segments of $\tilde{\gamma}$.

Comparing the null lengths of $\gamma$ and $\tilde{\gamma}$ is now immediate:
    \[
    \hat{L}(\gamma) \geq
    \int_a^b f(\alpha(s)) \sqrt{h\bigl(\dot{\beta}(s),\dot{\beta}(s)\bigr)}\,ds\geq
    \int_a^b f(\tilde{\alpha}(s)) \sqrt{h\bigl(\dot{\beta}(s),\dot{\beta}(s)\bigr)}\,ds=\hat{L}(\tilde{\gamma}),
    \]
so $\tilde{\gamma}$ is indeed not longer than $\gamma$. We can estimate from below the length of the curve $\tilde{\gamma}$ by dividing it into three parts: its first segment, its last one, and the rest. We have
    \[
    \begin{split}
        \hat{L}(\tilde{\gamma})&=
        \hat{L}(\tilde{\gamma}_1) + \hat{L}(\tilde{\gamma}_{\text{rest}})+\hat{L}(\tilde{\gamma}_n)= 
        t_p-t^* + \int_{\tilde{s}_1}^{\tilde{s}_{n-1}} f(\tilde{\alpha}(s)) \sqrt{h\bigl(\dot{\beta}(s),\dot{\beta}(s)\bigr)} \, ds + t_q-t^* \\
        & \geq t_p+t_q-2t^* + f(t^*) \, L_h(\beta|_{[\tilde{s}_1,\tilde{s}_{n-1}]}).
    \end{split}
    \]
The length of the restriction of $\beta$ to $[\tilde{s}_1,\tilde{s}_{n-1}]$ is the total length of $\beta$ minus the parts corresponding to the first and last segments, i.e., 
    \[
        L_h(\beta|_{[\tilde{s}_1,\tilde{s}_{n-1}]})=
        L_h(\beta)-\int_{t^*}^{t_p}\frac{ds}{f(s)}-\int_{t^*}^{t_q}\frac{ds}{f(s)}.
    \]
Putting everything together and knowing that $d_M(x_p,x_q)\leq L_h(\beta)$, we have 
    \[
    \hat{L}(\gamma)\geq t_p+t_q-2t^* + 
     f(t^*) \biggl(d_M(x_p,x_q)- \int_{t^*}^{t_p}\frac{ds}{f(s)} - \int_{t^*}^{t_q}\frac{ds}{f(s)}
    \biggr).
    \]

As this estimate holds for every $t^*\in(0,t_{\min})$, we can let $t^*\to 0^+$. Taking the infimum over all piecewise causal curves joining $p$ and $q$ one deduces
\[
    \d(p,q) \geq t_p+t_q+\lim_{s\to 0} f(s)\, \biggl(d_M(x_p,x_q)- \int_0^{t_p}\frac{ds}{f(s)} - \int_0^{t_q}\frac{ds}{f(s)} \biggr).
\]

For the converse inequality, let $\beta$ be a geodesic in $M$ between $x_p$ and $x_q$. For $2t^*<\min\{t_p,t_q\}$, we build a piecewise causal curve $\tilde{\gamma}=(\alpha,\beta)$ as before: we take the first and last segments to be null, with minimal $t$-component $t^*$ and, respectively, past-directed and future-directed; the rest of the curve is piecewise causal with $t$-component between $t^*$ and $2t^*$. Then
\[
    d_M(x_p,x_q)- \int_{t^*}^{t_p}\frac{ds}{f(s)} - \int_{t^*}^{t_q}\frac{ds}{f(s)}=L_h(\beta_{|[\tilde{s}_1,\tilde{s}_{n-1}]})\geq d_M(\tilde{x}_1,\tilde{x}_{n-1}).
\]
Multiplying the right hand side by $f(t^*)$ we get by \Cref{thm:generalised_upper_bound}
\[
    f(t^*)\, d_M(\tilde{x}_1,\tilde{x}_{n-1})=d_{t^*}(\tilde{x}_1,\tilde{x}_{n-1})
    \geq \d(\tilde{\gamma}(\tilde{s}_1),\tilde{\gamma}(\tilde{s}_{n-1})),
\]
and by the triangle inequality,
    \[
    \begin{aligned}
    \d(p,q)&\leq \d\bigl(p,\tilde{\gamma}(\tilde{s}_1)\bigr)+ \d\bigl(\tilde{\gamma}(\tilde{s}_1),\tilde{\gamma}(\tilde{s}_{n-1})\bigr) + \d\bigl((\tilde{\gamma}(\tilde{s}_{n-1}),q\bigr) \\
    &\leq t_p+t_q-2t^* + f(t^*) \biggl(d_M(x_p,x_q)- \int_{t^*}^{t_p}\frac{ds}{f(s)} - \int_{t^*}^{t_q}\frac{ds}{f(s)}
    \biggr),
    \end{aligned}
    \]
which, by the arbitrariness of $t^*>0$, concludes the proof of the case $J^-(p)\cap J^-(q)=\varnothing$.

For the case $J^-(p)\cap J^-(q)\neq\varnothing$ we have two ``subcases": either $p$ and $q$ are causally related or they are not. In the former case, say $p\leq q$, we have $t_r=t_p$ where we recall that $t_r$ is the maximum $t$-component in $A\coloneqq J^-(p)\cap J^-(q)=J^-(p)$. Thus, $t_p+t_q-2t_r=t_q-t_p$, which we know \cite[Lemma~3.11]{sormani_null_2016} coincides with the null distance between $p$ and $q$. 

So assume $A\neq \varnothing$ and that $p$ and $q$ are not causally related. Set $t_{\max}\coloneqq\max\{t_p,t_q\}$. Being $M$ compact, the time function $t$ is a regular cosmological time function by \Cref{lemma:regular_cosmotime_when_compact_slices}. In particular, $N$ is globally hyperbolic, which in turn implies that causal pasts and futures are closed. So consider a point $z=(t_z,x_z)\in A$ and the subset of $A$ of points whose $t$-component is greater than or equal to $t_z$. This set is closed and contained in the compact set $[t_z,t_{\max}]\times M$, so it is itself compact. The continuity of $t$ gives now the existence of a maximum in such a set, and thus in $A$. Call $t_r$ the maximum and $r\in A$ a point where it is attained.

It is clear now that $\d(p,q)\leq t_p+t_q-2t_r$, as the right hand side is the null length of the piecewise causal curve going down from $p$ to $r$ and then up from $r$ to $q$. For the converse inequality we would like to follow a similar reasoning as in the precedent part of the proof. The difference is that now
    \[
    d_M(x_p,x_q)=\int_{t_r}^{t_p}\frac{ds}{f(s)}+ \int_{t_r}^{t_q}\frac{ds}{f(s)},
    \]
so that a piecewise causal curve cannot be ``improved" by going as close to $t=0$ as wanted, as opposed to what happened in the case $A=\varnothing$.

So consider a piecewise causal null curve $\gamma=(\alpha,\beta)$ from $p$ to $q$. If its minimum $t$-value is lower than or equal to $t_r$, then $\hat{L}(\gamma)\geq t_p+t_q-2t_r$. On the other hand, if its minimum $t$-value is strictly greater than $t_r$ we build another piecewise null curve $\tilde{\gamma}=(\tilde{\alpha},\beta)$ in the same spirit as before, i.e., whose first segment goes from $p$ down until $\tilde{\alpha}(\tilde{s}_1)=t_r$ and whose last segment goes up to $q$ from $\tilde{\alpha}(\tilde{s}_{n-1})=t_r$. The rest of the curve (if there is anything left), can be chosen to be piecewise null with $t$-component between $t_r$ and $\min\alpha$. By the same comparison as before, one deduces that $t_p+t_q-2t_r\leq\hat{L}(\tilde{\gamma})\leq\hat{L}(\gamma)$. So also $\d(p,q)\geq t_p+t_q-2t_r$.

\end{proof}

\begin{remark}
    If in the setting of \Cref{prop:explicit_null_distance} the warping function is decreasing instead of increasing, the proof can be immediately adapted to show that if the set $B \coloneqq J^+(p)\cap J^+(q)$ is empty then 
    \[
        \d(p,q) =2\tau_{\max}-t_p-t_q+\lim_{s\to \tau_{\max}}f(s)\, \biggl(d_M(x_p,x_q)- \int_{t_p}^{\tau_{\max}}\frac{ds}{f(s)} - \int_{t_q}^{\tau_{\max}}\frac{ds}{f(s)} \biggr),
    \]
    and if \(B \neq \varnothing\), defining $t_r$ to be the minimum value of $t$ in $B$ we have
    \[
         \d(p,q) = 2t_r-t_p-t_q.
    \]
\end{remark}

\begin{example}\label{ex:limit_non_compact}
    Let $(M,h)$ be a compact Riemannian manifold and consider in $N=(0,\tau_{\max})\times M$ the sequence of Lorentzian metrics given by $g_j=-dt^2+h_j(t)$, where $h_j(t)=(1+tj)h$. The sequence of induced null distances $\d_j$ in $N$ is increasing by \cite[Lemma~4.15]{allen_properties_2022}. \Cref{thm:compactness_nullcompletion} ensures that the metric completion $(\bar{N}_j,\d_j)$ of each metric space $(N,\d_j)$ is compact, but we cannot apply \Cref{thm:future_developed_convergence} as the sequence of tensors is not uniformly bounded as in assumption \eqref{eq:B}. Indeed, the sequence of distances $\d_j$ in $N$ converges pointwisely to the distance $d_\infty$ given for $p=(t_p,x_p)$ and $q=(t_q,x_q)$, by
    \[
    d_{\infty}\bigr((t_p,x_p),(t_q,x_q)\bigl)=
    \begin{cases}
    t_p+t_q+d_M(x_p,x_q),  &x_p\neq x_q, \\
    \abs{t_p-t_q},  &x_p=x_q.
    \end{cases}
    \]
    In particular $(N_\infty,d_\infty)$, where $N_\infty=[0,\tau_{\max}]\times M$, is not compact. 
    
    \begin{proof}
    For $x_p=x_q$ we have $p\leq q$ or $q\leq p$ and thus $\hat{d}_j\bigr(p,q\bigl)=\abs{t_p-t_q}$, for every $j\in \N$. 
    
    So let us assume $x_p\neq x_q$. We will show that for $j$ big enough, their $j$-causal pasts do not intersect. Indeed, the intersection of their causal pasts is
    \[
    J_j^-(p)\cap J_j^-(q)=
    \biggl\{
    (t_r,x_r): d_M(x_p,x_r)\leq \int_{t_r}^{t_p}\frac{ds}{1+js}
    \quad \text{and} \quad
    d_M(x_q,x_r)\leq \int_{t_r}^{t_q}\frac{ds}{1+js}
    \biggr\}.
    \]
    The integrals are bounded above by $\ln(1+jt_p)/j$ or $\ln(1+jt_p)/j$, respectively, so it suffices to have $j$ big enough so that the sum of these bounds is strictly smaller than $d_M(x_p,x_q)$ to have an empty intersection.

    So for big enough $j$, we can apply the first part of \Cref{prop:explicit_null_distance} to have
    \[
    \d_j(p,q)=t_p+t_q+d_M(x_p,x_q)- \int_0^{t_p}\frac{ds}{1+js} - \int_0^{t_q} \frac{ds}{1+js} \xrightarrow{j\to\infty} t_p+t_q+d_M(x_p,x_q).
    \]

    This implies that $(N_\infty,d_\infty)$ is not compact. Indeed, consider a sequence $\{p_n=(t_n,x_n)\}\subset N$ such that $t_n=k>0$ is constant and the points $x_n\in M$ are all distinct. Then $d_\infty(p_n,p_m)>2k$, $\forall m\neq n$. In particular, such a sequence admits no $d_\infty$-converging subsequence, as no subsequence is even $d_\infty$-Cauchy.
    \end{proof}
\end{example}

\begin{example}\label{ex:triangle}
    In general, the extension of a null distance in a causally-null compactifiable spacetime need not be causally-null if the ``causal accessibility'' condition featured in \Cref{prop:causally_null_completions} is not assumed. Indeed, let $\varepsilon>0$ and consider in the Minkowski plane, with respect to \((t,x)\)-coordinates, the points $(0,-1)$, $(1,0)$ and $(0, 1+\varepsilon)$ (see \Cref{subfig:triangle}). Consider the (open) subset $\triangle$ of Minkowski plane bounded by the triangle formed by joining these points by straight lines. This is a causally-null compactifiable spacetime in which the null distance coincides with the restriction of the null distance in Minkowski plane, i.e., $\d(p,q)=\max\{\abs{\tau(p)-\tau(q)},\abs{x_p-x_q}\}$. In particular, this distance is bi-Lipschitz equivalent to the Euclidean distance and, therefore, the metric completion of the open triangle with respect to the null distance is just the closed triangle $\bar{\triangle}$. 

    The induced causal relation in the completion is given by 
    \[
    p\leq_{\d,\tau}q\iff \d(p,q)=\tau(q)-\tau(p).
    \]
    In particular, it coincides in $\triangle$ with the usual causal relation, something that we already knew because $\d$ encodes causality in $\triangle$. However, consider $r\coloneqq(1+\varepsilon,0)\in\bar{\triangle}\backslash\triangle$. It is clear from $\tau(r)=0$ and the expression of $\leq_{\d,\tau}$, that $p\leq_{\d,\tau}r \iff p=r$. For the converse relation we have
    \[
    r\leq_{\d,\tau}p\iff \d(p,r)=\max\bigl\{\tau(p),\abs{x_p-(1+\varepsilon)}\bigr\}=\tau(p)\iff \tau(p)+x_p\geq 1+\varepsilon.
    \]
    However, apart from $r$, no point in $\bar{\triangle}$ satisfies such condition, as the angle of the triangle at the point \(r\) is less than \(45\) degrees, whereas joining any point satisfying the condition above to the point \(r\) would make a (clockwise) angle with the \(x\)-axis bigger than \(45\) degrees and thus it would not be contained in \(\triangle\).

    As a consequence, if $p\neq r$ there is no $\leq_{\d,\tau}$-alternating sequence as in \Cref{def:causally_null_distance} joining $p$ and $r$. Therefore, the induced causally-null distance $\d_{\d,\tau}$ from $r$ is always infinite, whereas the extension of the original null distance to the metric completion was bounded. In other words, $\d\neq \d_{\d,\tau}$ in $\bar{\triangle}$. Notice that the point $r$ does not satisfy the accessibility condition required in \Cref{prop:causally_null_completions} for the completion of a causally-null compactifiable spacetime to be causally-null, namely, that each neighborhood $U$ of $r$ (in the topology of the metric completion) admits a point $p\in \triangle\cap U$ such that $p\leq_{\d,\tau}r$ or $r\leq_{\d,\tau}p$.
\end{example}

\begin{figure}[ht]
    \centering
        \tikzset{every picture/.style={line width=0.75pt}} 
        \begin{tikzpicture}[x=0.75pt,y=0.75pt,yscale=-1,xscale=1]
        \draw [color={rgb, 255:red, 155; green, 155; blue, 155 }  ,draw opacity=1 ]   (305.75,116.25) -- (305.83,226.17) ;
        \draw [color={rgb, 255:red, 155; green, 155; blue, 155 }  ,draw opacity=1 ]   (207.5,208.25) -- (443,207.75) ;
        \draw  [fill={rgb, 255:red, 0; green, 0; blue, 0 }  ,fill opacity=1 ] (304.13,136.17) .. controls (304.13,135.34) and (304.8,134.67) .. (305.63,134.67) .. controls (306.45,134.67) and (307.13,135.34) .. (307.13,136.17) .. controls (307.13,137) and (306.45,137.67) .. (305.63,137.67) .. controls (304.8,137.67) and (304.13,137) .. (304.13,136.17) -- cycle ;
        \draw    (305.63,136.17) -- (233.5,208.29) ;
        \draw    (422.63,207.67) -- (233.5,208.29) ;
        \draw    (305.63,136.17) -- (422.63,207.67) ;
        \draw  [fill={rgb, 255:red, 0; green, 0; blue, 0 }  ,fill opacity=1 ] (421.13,207.67) .. controls (421.13,206.84) and (421.8,206.17) .. (422.63,206.17) .. controls (423.45,206.17) and (424.13,206.84) .. (424.13,207.67) .. controls (424.13,208.5) and (423.45,209.17) .. (422.63,209.17) .. controls (421.8,209.17) and (421.13,208.5) .. (421.13,207.67) -- cycle ;
        \draw  [fill={rgb, 255:red, 0; green, 0; blue, 0 }  ,fill opacity=1 ] (232,208.29) .. controls (232,207.46) and (232.67,206.79) .. (233.5,206.79) .. controls (234.33,206.79) and (235,207.46) .. (235,208.29) .. controls (235,209.12) and (234.33,209.79) .. (233.5,209.79) .. controls (232.67,209.79) and (232,209.12) .. (232,208.29) -- cycle ;
        
        \draw (272.75,121.65) node [anchor=north west][inner sep=0.75pt]  [font=\footnotesize]  {$( 1,0)$};
        \draw (310.75,110.65) node [anchor=north west][inner sep=0.75pt]  [font=\footnotesize]  {$t$};
        \draw (197.75,190.65) node [anchor=north west][inner sep=0.75pt]  [font=\footnotesize]  {$( 0, -1)$};
        \draw (379.75,215.65) node [anchor=north west][inner sep=0.75pt]  [font=\footnotesize]  {$r=( 0, 1+\varepsilon)$};

        \draw (430.75,200.65) node [anchor=south][inner sep=0.75pt]  [font=\footnotesize]  {$x$};
        \end{tikzpicture}
        \caption{Situation described in \Cref{ex:triangle}. The point $r$, in the metric boundary of the triangle, is not related to any point in the completion with the causal relation $\leq_{\d,\tau}$. Therefore, $r$ is at infinite $\d_{\d,\tau}$-distance from any other point in $\bar{\triangle}$.}
        \label{subfig:triangle}
\end{figure}

\begin{example}\label{ex:limit-relation-not-intersection}
    In \Cref{prop:causal-relation-induced-by-limit-tensor} one proves the equality $\bigcap_j\leq_j = \leq_\infty$ for the case of a sequence of cosmological spacetimes. In this example we show that the inclusion $\subseteq$ is not true for general spacetimes. Specifically, consider the subset $N$ of $\R^2$ obtained after erasing a horizontal closed half-line (see \Cref{subfig:causal-relations-comparison}; the same result can be achieved by removing just one point). Consider, for every $j\in\N$, the Lorentzian metric on $N$ given by $g_j\coloneqq-dt^2+(1- \frac{1}{2j}) dx^2$, and call $g_\infty\coloneqq-dt^2+dx^2$, i.e., $g_\infty$ is just the restriction of the Minkowski metric to $N$. The causal cones in this sequence are decreasing with $j$ (with respect to inclusion) and the ``limit cones'' are the usual ones. It is now immediate to check that the points $p,q$ in \Cref{subfig:causal-relations-comparison} are causally related with respect to every metric $g_j$, but they are unrelated with respect to $g_\infty$. In other words, $\bigcap_j\leq_j \not\subset \leq_\infty$. The issue here is that, even if for each $j\in\N$ there is a $g_j$-causal curve between \(p\) and \(q\), the sequence of them is not contained in any compact and the Euclidean distance (which is the one induced by the Riemannian metric tensor $s$ as as in the proof of \Cref{prop:causal-relation-induced-by-limit-tensor}) is not proper, so one cannot invoke the limit curve theorem as in the aforementioned proof. Notice, however, that the equality $\bigcap_j\leq_j = \leq_{d_\infty,\tau}$ from \Cref{prop:synthetic-causal-relation-vs-intersection} is still true. Furthermore, in this case the null distance $\d_\infty$ induced by the limit tensor $g_\infty$ coincides with the pointwise limit $d_\infty$ of the null distances $\d_j$.
\end{example}

\begin{figure}
        \centering
        \tikzset{every picture/.style={line width=0.75pt}} 
        \begin{tikzpicture}[x=0.75pt,y=0.75pt,yscale=-1,xscale=1]
        
        \draw    (260.6,190.6) -- (470.2,190.6) ;
        \draw  [dash pattern={on 0.84pt off 2.51pt}]  (349.25,100) -- (260.6,190.6) ;
        
        \draw (401.2,179.4) node [anchor=north west][inner sep=0.75pt]  [font=\scriptsize] [align=left] {{Remove}};
        \draw (240.5,208.15) node [anchor=north west][inner sep=0.75pt]  [font=\small]  {$p$};
        \draw (293.5,129.15) node [anchor=north west][inner sep=0.75pt]  [font=\small]  {$q$};
        \draw  [fill={rgb, 255:red, 0; green, 0; blue, 0 }  ,fill opacity=1 ] (303.52,145.3) .. controls (303.52,144.52) and (304.15,143.89) .. (304.93,143.89) .. controls (305.7,143.89) and (306.33,144.52) .. (306.33,145.3) .. controls (306.33,146.08) and (305.7,146.71) .. (304.93,146.71) .. controls (304.15,146.71) and (303.52,146.08) .. (303.52,145.3) -- cycle ;
        \draw  [fill={rgb, 255:red, 0; green, 0; blue, 0 }  ,fill opacity=1 ] (253.52,216.3) .. controls (253.52,215.52) and (254.15,214.89) .. (254.93,214.89) .. controls (255.7,214.89) and (256.33,215.52) .. (256.33,216.3) .. controls (256.33,217.08) and (255.7,217.71) .. (254.93,217.71) .. controls (254.15,217.71) and (253.52,217.08) .. (253.52,216.3) -- cycle ;
        \end{tikzpicture}
    \caption{Situation described in \Cref{ex:limit-relation-not-intersection}. Points $p$ and $q$ are timelike related with respect to any Lorentzian metric $g_a=-dt^2+a^2dx^2$ with $a<1$, but they are not even causally related with respect to $g_1$.}
    \label{subfig:causal-relations-comparison}
\end{figure}

\begin{example}\label{ex:AB-bis}
    In this example we show that the uniform monotone limit of causally-null distances need not be causally-null. Indeed, consider again \cite[Example~5.7]{allen_properties_2022} as in \Cref{ex:allen-burtscher}. By \Cref{prop:causal-relation-induced-by-limit-tensor} we know that $\leq_\infty = \leq_{d_\infty,\tau}$ and thus $\d_{g_\infty}=\d_{d_\infty,\tau}$. However, in the aforementioned paper, the authors prove that $\d_{g_\infty}\neq d_\infty$. As a consequence, $d_\infty$ does not coincide with the distance it induces via \Cref{def:causally_null_distance} or, in other words, it is not causally-null.
\end{example}

\begin{example}\label{ex:Cantor}
    In \Cref{def:absolutely-continuous} we define $\leq_\infty$ by means of absolutely continuous curves which are almost everywhere $g_\infty$-causal and future directed and mention that, for Lipschitz metrics, this causal relation coincides with the classical one. In this example we show that the equality \(\leq_{\d_\infty,\tau} = \bigcap_j \leq_{\d_j,\tau}\) featured in \Cref{prop:causal-relation-induced-by-limit-tensor} would not be true if $\leq_\infty$ were defined by means of (piecewise) smooth curves. To that end, we prove that such an equality does not even hold in the interior, i.e., \(\leq_\infty \neq \bigcap_j \leq_j\). Consider the ``fat'' Cantor set $C_a$ obtained by removing from the segment $[0,1]$ the middle interval of length $a<1/3$ and recursively the middle intervals of length $a^n$ from the remaining intervals. This set is, as the usual Cantor set, closed with empty interior; however, it has measure $1-\sum_{n=1}^\infty a^n 2^{n-1}=\frac{1-3a}{1-2a} >0$. Now consider the characteristic function $\chi$ on $\R\backslash C_a$ and any positive number \(b>0\). As $C_a$ is closed, $1+b\chi$ is lower semicontinuous. One can approximate $1+b\chi$ from below by a monotone increasing sequence of smooth functions $f_j \geq 1$. Now, consider $(M,h)$ compact and Riemannian, as usual, and the sequence of smooth Lorentzian metric tensors $g_j=-dt^2+f_j(t)h$ on $N=(0,1)\times M$, i.e., each term is a warped product with warping function $f_j$. The limit tensor $g_\infty=-dt^2+(1+b\chi(t))h$ is discontinuous whenever $t\in C_a$. Consider now two numbers $t_1<t_2\in C_a$ such that they are not both the endpoints of the same erased segment from $[0,1]$. Then between $t_1$ and $t_2$ there are uncountably many points in $C_a$ and in fact \(C_a \cap (t_1,t_2)\) has positive measure. Finally, consider two points, $p=(t_1,x)$ and $q=(t_2,y)$, such that there exists an absolutely continuous curve $\gamma\colon [t_1,t_2]\to N$ from $p$ to $q$ satisfying $g_\infty(\dot\gamma,\dot\gamma)=0$ and $g_\infty(\partial_t,\dot\gamma)>0$, almost everywhere. In other words, $\gamma$ is a future directed $g_\infty$-null curve in the sense of \cite{grant2020}. Then $\gamma$ cannot be piecewise smooth, as in fact any representative of its derivative, extended to the full interval \([t_1,t_2]\), is discontinuous on a positive measure set of times (see \Cref{fig:cantor_cone}). Consequently, $p\leq_\infty q$ when the relation is defined by absolutely continuous curves, but not when defined by piecewise smooth ones. On the other hand, $p\leq_j q$, for every $j$.

    \begin{figure}[ht]
        \centering
        \includegraphics[scale = 0.5]{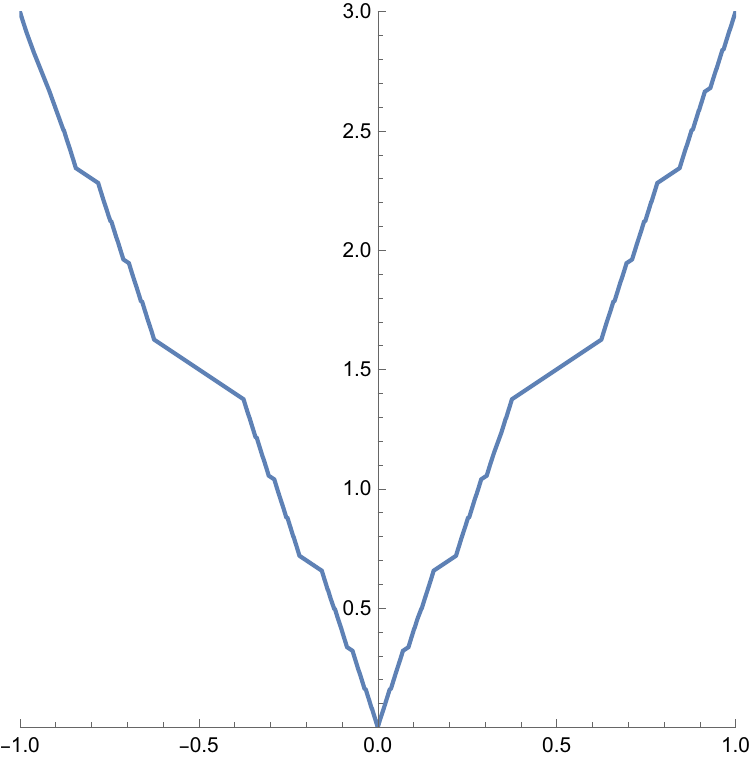}
        \caption{Future cone of the origin in the non-compact case \(M=\R\) with respect to the tensor \(g_\infty\) with parameters \(a =1/4\), \(b=4\). The boundary curves are null curves whose derivative is either \(1\) or \(5\). The set of times where the derivative ``jumps" has positive measure. To use this example for a compact spatial slice one can just consider $M=\mathbb{S}^1$.}
        \label{fig:cantor_cone}
    \end{figure}
\end{example}

\printbibliography[title=References]

\end{document}